\numberwithin{equation}{section}
\theoremstyle{plain}
\newtheorem{theorem}{Theorem}[section]
\newtheorem{proposition}[theorem]{Proposition}
\newtheorem{lemma}[theorem]{Lemma}
\newtheorem{corollary}[theorem]{Corollary}
\theoremstyle{definition}
\newcommand\backmatter{\appendix
\def\chaptermark##1{\markboth{%
\ifnum  \c@secnumdepth > \m@ne  \@chapapp\ \thechapter:  \fi  ##1}{%
\ifnum  \c@secnumdepth > \m@ne  \@chapapp\ \thechapter:  \fi  ##1}}%
\def\sectionmark##1{\relax}}
\def \no{\nonumber}
\def \pa{\partial}
\def\e{\epsilon}
\def\R{\mathbb{R}}
\def\Rn{{\mathbb{R}}^n_+}
\def\d{\partial}
\def\D{\Delta}
\def\crit {\frac{2n}{n-2}}
\def\conj {\frac{2n}{n+2}}
\def\ba{\begin{align}}
\def\ea{\end{align}}
\def\bp{\begin{proof}}
\def\ep{\end{proof}}
\def\func:u{\bar{u}_{(x_0,\e)}}
\def\ks{(\xi,\epsilon)}
\def\ksi{(\xi,\epsilon,i)}
\def\ksn{(\xi,\epsilon,n)}
\def\ksa{(\xi,\epsilon,a)}
\newcommand{\ud}{\mathrm{d}}
\newcommand{\Sn}{\mathbb{S}^n}
\newcommand{\Sp}{\mathbb{S}}
\begin{document}

\title{\Large \bf
Blow-up phenomena for the constant scalar curvature and constant boundary mean curvature equation}
\author{Xuezhang  Chen\thanks{X. Chen is partially supported by NSFC (No.11771204), A Foundation for the Author of National Excellent Doctoral Dissertation of China (No.201417) and start-up grant of 2016 Deng Feng program B at Nanjing University. Email: xuezhangchen@nju.edu.cn.} ~ and Nan Wu\thanks{N. Wu: 141110012@smail.nju.edu.cn.}\\
 \small
$^\ast$$^\dag$Department of Mathematics \& IMS, Nanjing University, Nanjing
210093, P. R. China
}

\date{}

\maketitle

\begin{abstract}
 We first present a warped product manifold with boundary to show the non-uniqueness of the positive constant scalar curvature and positive constant boundary mean curvature equation. Next, we construct a smooth counterexample to show that the compactness of the set of ``lower energy" solutions to the above equation fails when the dimension of the manifold is not less than $62$.

{{\bf $\mathbf{2010}$ MSC:} Primary 53C21, 35J20; Secondary 35B33, 34B18.}

{\bf Keywords:} Manifold with boundary, conformal metrics, scalar curvature, boundary mean curvature.
\end{abstract}


\section{Introduction}

Let $(M^n, g)$ be a smooth compact Riemannian manifold with boundary $\d M$ and $n\geq 3$. In 1999, Zheng-Chao Han and Yan Yan Li \cite{han-li2} proposed a question of finding a conformal metric with positive constant scalar curvature and any constant boundary mean curvature in the positive Yamabe constant conformal class. In analytical terms, it corresponds to the existence of a positive solution to
\begin{align}\label{PDE:scalar_mean_curvatures}
\begin{cases}
\displaystyle -\D_{g}u+c_n R_g u=c_1 u^{\frac{n+2}{n-2}}, &\quad \mathrm{in~~} M, \\
\displaystyle \frac{\d u}{\d \nu_g}+d_n h_g u=c u^{\frac{n}{n-2}}, &\quad \mathrm{on~~} \d M,
\end{cases}
\end{align} 
for $c_1\in \mathbb{R}_+,c \in \mathbb{R}$, where $c_n=(n-2)/[4(n-1)]$ and $d_n=(n-2)/2$, $R_g$ is the scalar curvature and $h_g $ is the mean curvature of $\d M$ and $\nu_g$ is the outward unit normal on $\d M$.  This existence problem has been studied by Z. C. Han and Y. Y. Li \cite{han-li1,han-li2}, and recently by the first named author and his collaborators \cite{Chen-Sun,ChenRuanSun}. The closely related works are referred to  J. Escobar \cite{escobar5,escobar3} etc. X. Chen, Y. Ruan an L. Sun \cite{ChenRuanSun} introduced a ``free" functional
\begin{align}\label{CRS_functional_I}
I_{(M,g)}[u]=&\int_{M}\left(\frac{4(n-1)}{n-2}|\nabla u|_{g}^2+R_{g}u^2\right)\ud\mu_{g}+2(n-1)\int_{\partial M} h_{g}u^2\ud\sigma_{g}\no\\
&-\frac{4(n-1)}{n}c_1\int_{M}u_+^{\frac{2n}{n-2}}\ud\mu_{g}-4c\int_{\partial M}u_+^{\frac{2(n-1)}{n-2}}\ud\sigma_{g}
\end{align}
for any $u \in H^1(M,g)$, where $u_+=\max\{u,0\}$. The authors applied the Mountain Pass Lemma to show the existence of PDE \eqref{PDE:scalar_mean_curvatures} for all $c \in \mathbb{R}$, in addition that the energy of solutions below a threshold $S_c$, except for the case that $n\geq 8$, $\pa M$ is umbilic, the Weyl tensor of $M$ vanishes on $\pa M$ and has an interior non-zero point. Here the geometric meaning of $S_c$ is the energy $I_{(\Rn,|\ud x|^2)}$ of a single bubble $u_{(0,1)}(x)$; see Section \ref{Sect3} or  \cite{ChenRuanSun}. We will present an example in Section \ref{Sect2} to show the non-uniqueness of PDE \eqref{PDE:scalar_mean_curvatures} when $c_1, c\in \mathbb{R}_+$. Indeed, Han-Li \cite{han-li2} established the compactness of the full set of positive solutions to PDE \eqref{PDE:scalar_mean_curvatures} for all $c \leq \bar c$ with any given positive constant $\bar c$  (see \cite[Conjecture 2]{han-li1} and \cite[Theorem 0.1]{han-li2}), provided that $M$ is locally conformally flat with umbilic boundary, and is not conformally equivalent to the standard hemisphere $\Sn_+$. 

Denote by $L_g=-\Delta_{g}+c_nR_{g}$ the conformal Laplacian and $B_{g}=\frac{\partial}{\partial \nu_g}+d_nh_g$ the boundary conformally covariant operator, respectively. Both $L_g$ and $B_g$ have the following conformally covariant properties: Let $\tilde g=u^{4/(n-2)}g$, then for any $\varphi \in C^\infty(\overline M)$, there hold
\begin{equation}\label{eq:conformal_invariance}
L_{g}(u\varphi)=u^{\frac{n+2}{n-2}}L_{\tilde g}(\varphi) \mathrm{~~and~~} B_{g}(u\varphi)=u^{\frac{n}{n-2}}B_{\tilde g}(\varphi).
\end{equation}
The Yamabe constant is defined by
\begin{align*}
Y(M,\d M, [g]) = \underset{\tilde g\in [g] }{\inf} \frac{\int_M R_{\tilde g} \ud\mu_{\tilde g} +2(n-1)\int_{\d M}h_{\tilde g} \ud\sigma_{\tilde g}}{(\int_M \ud\mu_{\tilde g})^{\frac{n-2}{n}}}.
\end{align*}

For the closed manifolds, the question of compactness of the full set of solutions to the Yamabe equation was initiated by R. Schoen in $1988$. A necessary condition is that the manifold is not conformally equivalent to the standard sphere $\Sn$. It has been extensively studied by R. Schoen \cite{Schoen1,Schoen2}, Y.Y. Li and M. Zhu \cite{li-zhu2}, O. Druet \cite{Druet}, F. Marques \cite{marques2}, Y.Y. Li and L. Zhang \cite{li-zhang,li-zhang2}, etc. Eventually, the compactness for dimensions $3 \leq n\leq 24$ with assuming positive mass theorem was established by M. Khuri, F. Marques and R. Schoen \cite{khuri1}. For the non-compactness part, S. Brendle \cite{Brendle3} discovered the first smooth counterexamples in dimensions $n\geq 52$. S. Brendle and F. Marques \cite{Brendle-Marques} extended the above counterexample to the remaining dimensions $25 \leq n \leq  51$.

For the manifolds with boundary,  the blow-up phenomena in dimensions $n\geq 25$ were discovered by Almaraz \cite{almaraz1} corresponding to $c_1=0, c>0$ in \eqref{PDE:scalar_mean_curvatures}. Such blow-up phenomena in large dimensions also appear in the $Q$-curvature equation (see J. Wei and C. Zhao \cite{Wei-Zhao} ) and in the fractional Yamabe problem (see S. Kim, M. Musso and J. Wei \cite{Kim-Musso-Wei}).

It is natural to expect that the blow-up phenomena of PDE \eqref{PDE:scalar_mean_curvatures} occur in large dimensions. Now we confirm it in dimensions $n \geq 62$ for the set of solutions whose energy of $I_{(M,g)}$ is below $S_c$.
\begin{theorem}\label{Thm:main}
 For $n\geq 62$, there exists a smooth Riemannian metric $g$ on $\Sn_{+}$, such that $Y(\Sn_+,\d \Sn_+, [g]) > 0$ and a sequence of positive smooth functions $\{v_\nu; \nu \in \mathbb{N}\}$ with the following properties:
 \begin{enumerate}
 \item[(i)] $g$ is not conformally flat;
\item[(ii)] $\d \Sn_+$ is umbilic with respect to $g$;
\item[(iii)] for all $\nu$,\,$v_\nu$ is a positive solution to PDE \eqref{PDE:scalar_mean_curvatures} with $c_1,c>0$ and $M=\Sn_{+}$;
 \item[(iv)] $I_{(\Sn_+,g)}[v_\nu]<S_c$;
 \item[(v)] $\sup_{\d \Sn_{+}}v_\nu \to \infty$ as $\nu \to \infty$.
 \end{enumerate}
\end{theorem}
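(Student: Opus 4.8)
\emph{Overall strategy.} The plan is to adapt the gluing construction of Brendle \cite{Brendle3} and Brendle--Marques \cite{Brendle-Marques}, together with its boundary incarnation due to Almaraz \cite{almaraz1}, to the mixed curvature equation \eqref{PDE:scalar_mean_curvatures} with $c_1,c>0$. The compactness theorem of Chen--Ruan--Sun \cite{ChenRuanSun} recalled above leaves open exactly one configuration below the threshold $S_c$: namely $n\ge 8$, $\d M$ umbilic, $W_g\equiv 0$ on $\d M$, and $W_g$ non-zero at an interior point; so the counterexample has to be engineered around an \emph{interior} Weyl obstruction that degenerates on the boundary. All the analysis is local, carried out in a boundary Fermi chart — a half-ball $B^+_\delta\subset\Rn$ centred at a point $p\in\d\Sn_+$ — in which the transplanted ``standard bubbles'' $\bar u_{(\xi,\epsilon)}$, i.e.\ the extremals of the trace--Sobolev inequality solving the Euclidean model of \eqref{PDE:scalar_mean_curvatures}, serve as the first approximations. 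These bubbles remain centred at boundary points and satisfy $\sup_{\d\Sn_+}\bar u_{(\xi,\epsilon)}\sim\epsilon^{-(n-2)/2}$, so once we produce genuine solutions of this profile with $\epsilon=\epsilon_\nu\downarrow 0$, property (v) is automatic.

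\emph{The metric.} In the chart we set $g=\exp(h)$, with $h$ a smooth trace-free symmetric $2$-tensor supported in $B^+_{\delta/2}$ and $C^0$-small, and glue $g$ to the round metric $g_0$ on $\Sn_+$ outside the chart; then $g$ is smooth, and since $Y(\Sn_+,\d\Sn_+,[g_0])>0$ and $h$ is $C^2$-small, $Y(\Sn_+,\d\Sn_+,[g])>0$. We take $h$ of the oscillatory type used in \cite{Brendle3,Brendle-Marques} in the tangential variables, vanishing to second order in $x_n$ at $\{x_n=0\}$ — which makes $\d\Sn_+$ totally geodesic, hence umbilic, giving (ii) and moreover $h_g\equiv 0$ on $\d\Sn_+$ — and with the surviving low-order normal Taylor coefficients of $h$ at $\{x_n=0\}$ algebraically tuned so that the Weyl tensor $W_g$ vanishes identically on $\d\Sn_+$ while remaining non-zero at interior points of $B^+_{\delta/2}$ clustering at $p$; the latter yields (i), as $g$ is then not even locally conformally flat. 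To get a genuine \emph{sequence} of blowing-up solutions for the one fixed metric, we endow $h$ with a self-similar (multi-scale) structure near $p$, so that the reduced energy below has non-trivial features at a sequence of scales $\epsilon_\nu\downarrow 0$, exactly as in \cite{Brendle3}. This places $(\Sn_+,g)$ precisely in the configuration left uncovered by \cite{ChenRuanSun}.

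\emph{Finite-dimensional reduction.} For $(\xi,\epsilon)$ near $(p,0)$ we build a refined approximate solution $W_{(\xi,\epsilon)}=\bar u_{(\xi,\epsilon)}+(\text{explicit lower-order corrections})$, solve the linearized equation modulo its approximate kernel $\mathrm{span}\{\d_\epsilon\bar u_{(\xi,\epsilon)},\,\d_{\xi_1}\bar u_{(\xi,\epsilon)},\dots,\d_{\xi_{n-1}}\bar u_{(\xi,\epsilon)}\}$ — using the non-degeneracy of the standard bubbles for the Euclidean model problem — and solve the infinite-dimensional part by a contraction mapping, getting a remainder $\phi_{(\xi,\epsilon)}$ with an a priori bound that is a power of $\epsilon$ depending on $n$. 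It remains to find critical points of the reduced functional $\mathcal F(\xi,\epsilon):=I_{(\Sn_+,g)}[W_{(\xi,\epsilon)}+\phi_{(\xi,\epsilon)}]$. Expanding the energy yields, schematically,
\[
\mathcal F(\xi,\epsilon)=S_c-\vartheta(\xi,\epsilon)+\mathcal E(\xi,\epsilon),
\]
where $\vartheta>0$ is the Weyl contribution — because $W_g$ vanishes on $\d\Sn_+$, $\vartheta$ is controlled by $\int|W_g|^2$ weighted by the bubble over a layer of thickness $O(\epsilon)$, hence of strictly higher order in $\epsilon$ than the term $\sim\epsilon^4|W(p)|^2$ of the closed case — and, by the multi-scale design of $h$, $\vartheta(\cdot,\epsilon)$ has strict local maxima at the scales $\epsilon_\nu$. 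Provided $\mathcal E=o(\vartheta)$ near each such scale, these persist as strict local minima $(\xi_\nu,\epsilon_\nu)$ of $\mathcal F$ with $\mathcal F(\xi_\nu,\epsilon_\nu)<S_c$. Setting $v_\nu:=W_{(\xi_\nu,\epsilon_\nu)}+\phi_{(\xi_\nu,\epsilon_\nu)}$ gives a smooth solution of \eqref{PDE:scalar_mean_curvatures} for some $c_1,c>0$ (which may be taken independent of $\nu$), positive by the strong maximum principle since it is close to a positive bubble, with $I_{(\Sn_+,g)}[v_\nu]<S_c$ and $\sup_{\d\Sn_+}v_\nu\gtrsim\epsilon_\nu^{-(n-2)/2}\to\infty$; this is (iii)--(v).

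\emph{Main obstacle.} The crux — and the source of the dimension restriction — is the sharp inequality $\mathcal E=o(\vartheta)$. Because the Weyl obstruction degenerates on $\d\Sn_+$, the gain $\vartheta$ is strictly weaker (of higher order in $\epsilon$) than in the interior Yamabe problem, so one must work considerably harder to keep all the remainder terms — from the curvature of $g$, from the nonlinearity, and from $\phi_{(\xi,\epsilon)}$ — strictly below it; this pushes the required dimension above Brendle's threshold $52$, and a careful choice of the profile of $h$ together with the fast decay of the bubbles and their derivatives in high dimensions yields exactly $n\ge 62$. The remaining ingredients — elliptic regularity and positivity for $v_\nu$, the explicit form of the standard bubbles and their non-degeneracy, and the energy expansion itself — are a lengthy but routine adaptation of the machinery already available for \eqref{PDE:scalar_mean_curvatures}.
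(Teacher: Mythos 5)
Your overall scaffold — a boundary Lyapunov--Schmidt reduction with a multi-scale metric perturbation in a boundary Fermi chart, critical points of a reduced energy, and a sequence of scales $\epsilon_\nu\downarrow 0$ manufacturing the blow-up — is exactly the paper's framework (Sections~\ref{Sect3}--\ref{Sect6}). But two of your design choices for the metric, and your explanation of the dimension threshold, do not match the paper and would likely cause the argument to fail or require a fresh analysis.

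First, the paper does \emph{not} take $h$ to vanish to second order in $x_n$, and does \emph{not} arrange $W_g\equiv 0$ on $\d\Sn_+$. The paper's $h$ satisfies $h_{na}=0$ and $\d_n h_{ab}=0$ on all of $\overline{\Rn}$ — i.e.\ $h$ is essentially independent of the normal variable and is nonzero on the boundary — and the totally geodesic (hence umbilic) property comes solely from $\pi_{ij}=-\tfrac12\d_n g_{ij}=0$, not from $h=O(x_n^2)$. Your requirement $h=O(x_n^2)$ kills $h$ on $\{x_n=0\}$, which is precisely where the bubble concentrates; the leading term in the reduced energy (the quadratic-in-$h$ contribution $\tfrac12\int h_{ac}h_{bc}\d_a u_{\ks}\d_b u_{\ks}-\tfrac{c_n}{4}\int(\d_ch_{ab})^2 u_{\ks}^2$ of Corollary~\ref{Cor:key_est}) would then come in at a strictly higher order in $\lambda$, and the delicate sign analysis of Section~\ref{Sect5} would need to be redone at that weaker order, very possibly pushing the threshold dimension up or destroying the strict local minimum altogether. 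Relatedly, your rationale for wanting $W_g\equiv 0$ on $\d\Sn_+$ — ``the Chen--Ruan--Sun compactness theorem leaves open exactly this configuration'' — misreads that reference: it is an \emph{existence} theorem with a mountain-pass argument, and the quoted exceptional case is where existence below $S_c$ is not obtained by that method, not an exhaustive list of where compactness might fail. No such boundary vanishing of the Weyl tensor is needed or arranged in the paper's construction.

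Second, your heuristic for $n\ge 62$ (``the Weyl obstruction degenerates on the boundary, so the gain is of higher order than $\epsilon^4$'') is not how the threshold arises. The quadratic gain in the paper is of the \emph{same} order $\sim\mu^2\lambda^{4d+4}$ as in Brendle's closed case. What shifts the threshold from $52$ to $62$ is the change in numerics coming from the half-space/trace structure: the coefficients $c_q=\int_0^\infty\bigl(1+(t-T_c)^2\bigr)^{(5+2q-n)/2}\ud t$ (depending on $T_c<0$) appear in place of the Beta-function constants of the closed case, and Lemma~\ref{numesti1} gives only two-sided bounds on $c_{q+1}/c_q$ rather than exact values. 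The discriminant of the quadratic $p_n(a_0)$ that determines $I'(1)=0$, $I''(1)<0$, $I(1)>0$ and $J(1)<0$ is then only controllable (and positive) for $n\ge 62$ via the explicit polynomial inequality $q(n)=9(n+3)(n-9)(n-10)-8(n-8)^2(n+7)>0$. Your ``higher order in $\epsilon$'' story would, if anything, suggest a much larger threshold, and in any case is not the mechanism in the paper.
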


Since the compactness results of PDE \eqref{PDE:scalar_mean_curvatures} are not abundant yet, the critical dimension of the non-compactness is not a main issue in this paper and thus left to future study.

The paper is arranged as follows. We show the multiplicity of PDE \eqref{PDE:scalar_mean_curvatures} on a warped product manifold with boundary, which is presented in Section \ref{Sect2}. In Section \ref{Sect3}, we describe how the problem can be reduced to finding critical points of a certain function $\mathcal{F}_g\ks$, where $\xi$ is a vector in $\R^{n-1}$ and $\epsilon$ is a positive real number. In Section \ref{Sect4}, we show that the function $\mathcal{F}_g\ks$ can be approximated by an auxiliary function $F\ks$. In Section \ref{Sect5}, we prove that the function $\mathcal{F}\ks$ has a strict local minimum point $(0,1)$. Finally, in Section \ref{Sect6}, we use a perturbation argument to find critical points of $\mathcal{F}_g\ks$ and then show the non-compactness.

\vskip .2 in
\noindent \textbf{Acknowledgement:} Part of this work was carried out while both authors were visiting the Department of Mathematics at Rutgers University, to which  they are grateful for providing the very stimulating research environment and supports.
 
\section{Non-uniqueness: an example}\label{Sect2}
The purpose of this section is to construct a warped product manifold with boundary, which demonstrates the multiplicity of solutions of \eqref{PDE:scalar_mean_curvatures}. This is somewhat inspired by the one for the Yamabe problem in \cite[p.178]{aubin_book}.
\begin{proposition}
If $n\geq 5$ and $c_1,c_2>0$, then PDE \eqref{PDE:scalar_mean_curvatures} admits at least two positive smooth solutions.
\end{proposition}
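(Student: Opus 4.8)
The plan is to build a warped product manifold $M = N \times_f [0,1]$ (or $N \times [0,L]$ with a warping function) over a suitable closed manifold $N$, chosen so that the PDE \eqref{PDE:scalar_mean_curvatures} decouples and reduces to an ODE in the warping variable. Concretely, I would take $N$ to be a closed manifold of dimension $n-1$ admitting a metric of constant positive scalar curvature (a round sphere or a product thereof would do), and consider $g = dt^2 + f(t)^2 g_N$ on $N \times [0,L]$, arranging the boundary at $t=0$ (and possibly capping off smoothly at $t=L$, as in Aubin's construction for the Yamabe problem, to produce a genuine manifold with connected boundary). The key point is that, by conformal covariance \eqref{eq:conformal_invariance}, solving \eqref{PDE:scalar_mean_curvatures} is equivalent to finding a conformal metric with scalar curvature $\equiv c_1 \cdot \mathrm{const}$ in $M$ and boundary mean curvature $\equiv c \cdot \mathrm{const}$ on $\partial M$; for a warped product this becomes a one-dimensional problem.

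First I would write down the profile ODE. Looking for a conformal factor depending only on $t$, the scalar curvature equation $-\Delta_g u + c_n R_g u = c_1 u^{(n+2)/(n-2)}$ collapses to a second-order ODE in $t$ (with the transversal Laplacian contributing nothing since $u=u(t)$), and the boundary condition $\partial u/\partial\nu_g + d_n h_g u = c\, u^{n/(n-2)}$ at $t=0$ becomes a Neumann-type boundary condition on the ODE. The constant function $u\equiv \text{const}$ (suitably normalized so the original warped-product metric already has the right constant curvatures, or after a trivial rescaling) is one solution — this is the obvious ``trivial'' solution coming from the product structure. To produce a second, genuinely different solution I would either (a) use a variational argument: the functional $I_{(M,g)}$ restricted to $t$-invariant functions, or the full functional, has a mountain-pass critical point whose energy is strictly larger than that of the trivial solution — one checks the trivial solution is a strict local minimum (by analyzing the linearized operator / second variation and using $n \geq 5$ to control the relevant eigenvalues and Sobolev constants) while the functional is unbounded below, forcing a second critical point at a higher level; or (b) argue directly via ODE shooting/bifurcation that the boundary-value problem for the profile has a non-constant positive solution. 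Route (a) is cleaner and parallels \cite{ChenRuanSun}; I would use it.

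The dimensional hypothesis $n \geq 5$ and the positivity $c_1, c_2 > 0$ enter precisely in the local-minimum analysis: one needs the second variation of $I_{(M,g)}$ at the trivial solution to be positive definite, which requires the first nonzero eigenvalue of the relevant Jacobi-type operator on the warped product to exceed the coefficient coming from the nonlinear terms, and this is where the warping function $f$ must be chosen carefully (e.g. making $f$ close to constant, or choosing $N$ with large scalar curvature so that the cross-sectional geometry dominates). The main obstacle I anticipate is exactly this: verifying that the explicit warped product one writes down simultaneously (i) is a smooth closed-up manifold with boundary, (ii) has positive Yamabe constant $Y(M,\partial M,[g]) > 0$ so that the variational setup is valid, and (iii) makes the trivial solution a strict local (but not global) minimum of $I_{(M,g)}$, so that the mountain pass applies and yields a distinct second solution. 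Once a concrete $f$ and $N$ are pinned down, the remaining estimates — computing $R_g$, $h_g$, the second variation, and checking the Palais–Smale condition below the first bubbling threshold $S_c$ — are routine, and smoothness and positivity of both solutions follow from standard elliptic regularity and the maximum principle.
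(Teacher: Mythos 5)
Your overall plan shares the paper's skeleton: exhibit a constant solution on a product-type manifold, produce a second solution by the mountain-pass construction of \cite{ChenRuanSun}, and then argue the two are distinct. But there is a genuine gap in the last step, which is where the real work is. You propose to distinguish the two solutions by showing the constant is a \emph{strict local minimum} of $I_{(M,g)}$ via a second-variation/eigenvalue computation. This is neither verified nor easy to verify: the linearized operator at the constant on a warped product $N\times_f[0,L]$ has no reason to be nonnegative without a very careful choice of $f$ and $g_N$, and even if it were, you would still need to rule out the degenerate possibility that the mountain-pass point coincides with the constant. Nothing in your outline pins this down. The paper bypasses all of this with a simple and robust device you are missing: a free scaling parameter. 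It takes $(M,g)=(M^{n_1}\times M^{n_2},\,k g_1+g_2)$ with $M^{n_1}$ compact-with-boundary of constant $R_{g_1},h_{g_1}>0$ (from \cite{ChenRuanSun}) and $M^{n_2}$ closed of constant $R_{g_2}>0$, observes that $u\equiv 1$ solves \eqref{prob:multiplicity}, and then computes both sides of an energy inequality as $k\to\infty$: the mountain-pass threshold $S_c(k)$ stays bounded (it converges to a finite limit $S_c(\infty)$ by dominated convergence), while $I_{(M,g)}[1]\sim k^{n_1/2}\to\infty$. Hence $I[1]>S_c(k)>I[u_0]$ for $k$ large, so $u_0\neq 1$. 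No second-variation analysis is needed.

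Two further inaccuracies in your proposal. First, the role of $n\geq 5$: in the paper it is purely a bookkeeping constraint $n=n_1+n_2\geq 3+2=5$ from the product construction, not an eigenvalue condition as you suggest. Second, your geometric model $N\times[0,L]$ with $N$ closed and a cap at $t=L$ introduces an unnecessary degeneration of the warping function; the paper's model, where the boundary comes from $\partial M^{n_1}$ and the second factor is closed, avoids this entirely and lets it invoke \cite{ChenRuanSun} directly to arrange $R_{g_1},h_{g_1}$ both constant and positive. If you want to salvage your route, the fix is not the second-variation analysis but rather the introduction of a one-parameter family of metrics (your $f$ could serve this role) together with an asymptotic comparison of $I[1]$ against the bubble threshold, exactly as in the paper.
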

\begin{proof}
Suppose $(M^{n_1} , g_1)$ is an $n_1$-dimensional ($n_1\geq 3$) smooth compact manifold with boundary such that $R_{g_{1}}$ and $h_{g_1}$ are two positive constants, which is guaranteed by \cite[Theorem 1.1]{ChenRuanSun} for almost all smooth manifolds with boundary. Let $(M^{n_2}, g_2)$ be an $n_2$-dimensional ($n_2\geq 2$) smooth closed manifold with positive constant scalar curvature $R_{g_2}$. Consider a warped product manifold $(M^n,g)=(M^{n_1}\times M^{n_2}, k g_1+g_2)$, where $n=n_1+n_2\geq 5$ and $k$ is a positive constant.
Obviously, $\d M=\d M^{n_1} \times M^{n_2}$ and the second fundamental form $\pi$ on $\d M$ satisfies:
\begin{align*}
\pi_{ij}=k^{\frac{1}{2}}\pi^{g_1}_{ij}, \pi_{iI}=0, \pi_{IJ}=0 \quad 1\leq i,j \leq n_1-1, 1\leq I,J\leq n_2,
\end{align*}
which means $h_g=k^{-\frac{1}{2}}h_{g_1}$ and $R_g=k^{-1}R_{g_1}+R_{g_2}$.

Next we claim that the following PDE
\begin{align}\label{prob:multiplicity}
\begin{cases}
\displaystyle -\D_{g}u+ c_n R_g u= c_n R_g u^{\frac{n+2}{n-2}}, &\quad \mathrm{~~in~~} M, \\
\displaystyle \frac{\d u}{\d \nu_g}+d_n h_g u=d_n h_g u^{\frac{n}{n-2}}, &\quad \mathrm{~~on~~} \d M,
\end{cases}
\end{align}
has at least two positive smooth solutions.

To that end, first notice that $1$ is a solution of \eqref{prob:multiplicity}. On the other hand, it follows from \cite[Theorem 1.1]{ChenRuanSun} that there exists a positive smooth mountain critical point  $u_0$ of $I_{(M,g)}$, which is the one in \eqref{CRS_functional_I} with $c_1=c_nR_g$ and $c=d_nh_g$, such that
\begin{align*}
I_{(M,g)}[u_0]<S_c(k),
\end{align*}
where we use $S_c(k)$ instead of $S_c$ to emphasize $k$. If we replace $(M,g)$ by $(\Rn,|\ud x|^2)$ in \eqref{prob:multiplicity} with its positive solution denoted by 
$$W(x)=\left(\frac{n(n-1)}{R_g}\right)^{\frac{n-2}{4}}\left(\frac{2}{1+|x-T_c\textbf{e}_n|^2}\right)^{\frac{n-2}{2}}\quad \mathrm{with~~} T_c=-\frac{h_g}{2},$$
which is also called a standard bubble. Furthermore, as $k \to \infty$, by the dominated convergence theorem we have
\begin{align*}
S_c(k)=&I_{(\Rn,g_{\mathbb{R}^n)}} [W]\\
=&\frac{4(n-1)}{n-2}\int_{\Rn}|\nabla W|^2 \ud x-\frac{n-2}{n}R_g\int_{\Rn} W^{\frac{2n}{n-2}}\ud x-2(n-2)h_g \int_{\pa \Rn}W^{\frac{2(n-1)}{n-2}} \ud \sigma\\
=&\frac{2}{n} R_g\int_{\Rn} W^{\frac{2n}{n-2}}\ud x+2h_g \int_{\pa \Rn}W^{\frac{2(n-1)}{n-2}} \ud \sigma\\
\to& \frac{2}{n}R_{g_2} \left(\frac{n(n-1)}{R_{g_2}}\right)^{\frac{n}{2}}\int_{\Rn} \left(\frac{2}{1+|x|^2}\right)^n \ud x\\
=&\frac{2}{n}R_{g_2} \left(\frac{n(n-1)}{R_{g_2}}\right)^{\frac{n}{2}}\frac{\omega_{n}}{2}=R_{g_2} \left(\frac{n(n-1)}{R_{g_2}}\right)^{\frac{n}{2}}\frac{\omega_n}{n}:=S_c(\infty).
\end{align*}
For simplicity, we let 
$$V_1=\mathrm{Vol}(M^{n_1},g_1), ~~\hat V_1=\mathrm{Vol}(\pa M^{n_1},g_1),~~V_2=\mathrm{Vol}(M^{n_2},g_2).$$
Indeed, if $k$ is large enough, then $u_0$ is distinct from $1$. This follows from
\begin{align*}
I_{(M,g)}[1]=&\int_{M} R_g \ud\mu_g +2(n-1)\int_{\d M}h_g \ud\sigma_g -\frac{n-2}{n}R_g \mathrm{Vol}(M)-2(n-2)h_g \mathrm{Vol}(\d M)\\
=&\frac{2}{n} (k^{-1}R_{g_1}+R_{g_2})k^{\frac{n_1}{2}}V_1 V_2+2k^{-\frac{1}{2}}h_{g_1} k^{\frac{n_1-1}{2}}\hat{V_1}V_2\\
>&S_c(\infty)+1>S_c(k)>I_{(M,g)}[u_0],
\end{align*}
if $k$ is sufficiently large. 
\end{proof}

\section{Lyapunov-Schmidt reduction}\label{Sect3}
From now on, let $c_1=n(n-2)$ and $T_c=-c/(n-2)$ be a negative real number for brevity. Given a pair $\ks\in\R^{n-1}\times (0,\infty)$ we define
 $$ u_{\ks}(x)=\left(\frac{\epsilon}{\epsilon^2+(x_n-T_c \epsilon)^2+|x'-\xi|^2} \right)^\frac{n-2}{2},$$
 where $x=(x',x_n) \in \Rn$.
 Then $u_{\ks}$ satisfies
 \begin{align}\label{bubble_half_space}
 \begin{cases}
 \displaystyle -\D u_{\ks}=n(n-2)u_{\ks}^{\frac{n+2}{n-2}}, &\quad \mathrm{in~~} \Rn, \\
\displaystyle \frac{\d u_{\ks}}{\d x_n}=(n-2)T_c u_{\ks}^{\frac{n}{n-2}}, &\quad \mathrm{on~~} \d \Rn.
\end{cases}
\end{align}
This implies that the metric $u_{\ks}^{4/(n-2)}|\ud x|^2$ is Einstein, then there holds
 \begin{equation}\label{eq:Einstein}
 \d_a u_{\ks} \d_b u_{\ks}-c_n\d_a\d_b u_{\ks}^{2}=\frac{1}{n}\left(|\nabla u_{\ks}|^2 -c_n\Delta u_{\ks}^{2} \right)\delta_{ab}
 \end{equation}
for $1 \leq a,b \leq n$. Define 
 \begin{align*}
  u_{\ksi}(x)=&\left(\frac{\epsilon}{\epsilon^2 +(T_c \epsilon-x_n)^2+|x'-\xi|^2}\right)^\frac{n+2}{2}\frac{2\epsilon(x_i -\xi_i)}{\epsilon^2 +(T_c \epsilon-x_n)^2+|x'-\xi|^2},\\
 \hat{u}_{\ksi}(x)=&\left(\frac{\epsilon}{\epsilon^2 +(T_c \epsilon-x_n)^2+|x'-\xi|^2}\right)^\frac{n}{2}\frac{2\epsilon(x_i -\xi_i)}{\epsilon^2 +(T_c \epsilon-x_n)^2+|x'-\xi|^2}
 \end{align*}
for $1\leq i\leq n-1$,
 and
\begin{align*}
u_{\ksn}(x)=&\left(\frac{\epsilon}{\epsilon^2 +(T_c \epsilon-x_n)^2+|x'-\xi|^2}\right)^\frac{n+2}{2}\frac{(1+T_{c}^{2})\epsilon^2-x_{n}^{2}-|x'-\xi|^2}{\epsilon^2 +(T_c \epsilon-x_n)^2+|x'-\xi|^2},\\
 \hat{u}_{\ksn}(x)=&\left(\frac{\epsilon}{\epsilon^2 +(T_c \epsilon-x_n)^2+|x'-\xi|^2}\right)^\frac{n}{2}\frac{(1+T_{c}^{2})\epsilon^2-x_{n}^{2}-|x'-\xi|^2}{\epsilon^2 +(T_c \epsilon-x_n)^2+|x'-\xi|^2}.
 \end{align*}
Obviously, $\|{u_{\ksa}}\|_{L^\conj(\Rn)}$ and $\|{u_{\ksa}}\|_{L^{\frac{2(n-1)}{n}}(\d\Rn)}$ are constant in $\xi$ and $\epsilon$ for $1\leq a\leq n$.

Define
$$\mathcal{E}=\left\{w\in L^{\crit}(\Rn)\cap L^{\frac{2(n-1)}{n-2}}(\d\Rn)\cap H^{1}_{\mathrm{loc}}(\Rn); \int_{\Rn} |\nabla w|^2 <\infty \right\}$$
and
\begin{align*}
\mathcal{E}_{\ks}=\left\{w\in \mathcal{E}; 2n\int_{\Rn} w u_{\ksi}-T_c\int_{\d\Rn}w\hat{u}_{\ksi} =0~~\mathrm{for~~all~~}1\leq i\leq n\right\}.
\end{align*}
We define a norm on $\mathcal{E}$ by $\|w\|_{\mathcal{E}}=\left( \int_{\Rn}|\nabla w|^2 \right)^{1/2}$. Clearly, $u_{\ks} \in \mathcal{E}_{\ks}$. 

It follows from \cite[Theorem 3.3]{escobar5} that there exists an optimal constant $K=K(n)>0$ such that
\begin{equation}\label{ineq: Sobolev and trace}
\left( \int_{\Rn} w^{\frac{2n}{n-2}} \right)^{\frac{n-2}{n}}+\left(  \int_{\d \Rn} w^{\frac{2(n-1)}{n-2}}\right)^{\frac{n-2}{n-1}}\leq K\int_{\Rn}|\nabla w|^2
\end{equation}
for all $w\in \mathcal{E}$.

Let $\pi:\eta \in \Sn \mapsto x \in \Rn$ be the stereographic projection (see \cite[Fig.1 on p.9]{Chen-Sun}), which is given by
\begin{align*}
\begin{cases}
\displaystyle \eta_{i}=\frac{2x_i}{1 +(T_c -x_n)^2+|x'|^2},\quad\mathrm{for~~}1\leq i\leq n-1,\\
\displaystyle \eta_{n}=\frac{2(x_n -T_c)}{1 +(T_c-x_n)^2+|x'|^2},\\
\displaystyle \eta_{n+1}=\frac{|x'|^2 +(x_n -T_c )^2 -1}{1 +(T_c-x_n)^2+|x'|^2}.
\end{cases}
\end{align*}
Let $\Sigma=\pi^{-1}(\Rn)$ be a spherical cap equipped with the standard round metric $g_\Sigma=g_{\Sn}$. If we choose the center of $\Sigma$ as the north pole, the coordinate system $\xi$ is changed to another coordinate system $\zeta$ by
\begin{align*}
\begin{cases}
\displaystyle \zeta_i=\eta_i, \qquad \mathrm{for~~}1\leq i\leq n-1,\\
\displaystyle \zeta_n=-\frac{T_c\eta_n+\eta_{n+1}}{\sqrt{1+T_c^2}},\\
\displaystyle \zeta_{n+1}=\frac{\eta_n-T_c\eta_{n+1}}{\sqrt{1+T_c^2}}.
\end{cases}
\end{align*}
\begin{proposition}\label{prop:eigenvalues}
There exist a positive constant $\mu$ depending only on $n$ and $T_c $, such that
\begin{align}\label{eigen esti}
& \frac{1}{2}\left[\int_{\Rn}\left(|\nabla w|^2 -n(n+2)u_{\ks}^{\frac{4}{n-2}}w^2\right) +n T_c \int_{\d \Rn}u_{\ks}^{\frac{2}{n-2}}w^2\right]\no \\
&+\frac{(1+(n-2)\mu)2^{n-1}}{n|\Sigma|-T_c|\d\Sigma|}\left[ 2n\int_{\Rn}u_{\ks}^{\frac{n+2}{n-2}}w-T_c \int_{\d \Rn}u_{\ks}^{\frac{n}{n-2}}w\right]^2\geq \mu \|w\|_{\mathcal{E}}^2
\end{align}
for all $w \in \mathcal{E}_{\ks}$.
\end{proposition}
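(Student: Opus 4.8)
The plan is to split $\E_{\ks}$ into the line spanned by $u_{\ks}$ and a complementary subspace on which the linearized form is coercive, using the correction term to bridge the two. Write $Q(\cdot,\cdot)$ for the symmetric bilinear form whose associated quadratic form $Q(w,w)$ is the bracketed expression in the first line of \eqref{eigen esti}, and put $\ell(w):=2n\int_{\Rn}u_{\ks}^{\frac{n+2}{n-2}}w-T_c\int_{\d\Rn}u_{\ks}^{\frac{n}{n-2}}w$, so that \eqref{eigen esti} reads $\tfrac12 Q(w,w)+\Lambda\,\ell(w)^2\geq\mu\|w\|_{\E}^2$ with $\Lambda=(1+(n-2)\mu)\,2^{n-1}(n|\Sigma|-T_c|\d\Sigma|)^{-1}$. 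First I would prove the identity $Q(u_{\ks},w)=-2\,\ell(w)$ for all $w\in\E$; this is a one-line integration by parts in the gradient term followed by the substitution of \eqref{bubble_half_space}. In particular $Q(u_{\ks},u_{\ks})=-2\,\ell(u_{\ks})$, and since $T_c<0$ one has $\ell(u_{\ks})=2n\int_{\Rn}u_{\ks}^{\crit}-T_c\int_{\d\Rn}u_{\ks}^{\critbordo}>0$; moreover, because $u_{\ks}^{4/(n-2)}|\ud x|^2$ is carried by the projection $\pi$ of Section \ref{Sect3} onto $\tfrac14 g_\Sigma$ on $\Sigma$, we get $\int_{\Rn}u_{\ks}^{\crit}=2^{-n}|\Sigma|$ and $\int_{\d\Rn}u_{\ks}^{\critbordo}=2^{-(n-1)}|\d\Sigma|$, so $\ell(u_{\ks})=2^{-(n-1)}(n|\Sigma|-T_c|\d\Sigma|)$ and hence $\Lambda\,\ell(u_{\ks})=1+(n-2)\mu$ --- this is exactly what forces the value of $\Lambda$. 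A parallel integration by parts yields $\|u_{\ks}\|_{\E}^2=(n-2)\big(n\int_{\Rn}u_{\ks}^{\crit}-T_c\int_{\d\Rn}u_{\ks}^{\critbordo}\big)$, whence $(n-2)\,\ell(u_{\ks})-\|u_{\ks}\|_{\E}^2=n(n-2)\int_{\Rn}u_{\ks}^{\crit}>0$, a fact needed at the end.

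The key analytic input is a spectral gap: there exists $\mu_0=\mu_0(n,T_c)>0$ with $Q(w,w)\geq 2\mu_0\|w\|_{\E}^2$ for all $w\in\E_{\ks}$ satisfying $\ell(w)=0$. To obtain it I would transport everything to the spherical cap $\Sigma$ via $w=u_{\ks}\,(\varphi\circ\pi)$: by the conformal covariance \eqref{eq:conformal_invariance}, $\|w\|_{\E}^2$ becomes a fixed multiple of the conformally invariant energy of $\varphi$ on $(\Sigma,g_\Sigma)$, while $Q(w,w)$ becomes the quadratic form of the fixed self-adjoint operator $-\D_{g_\Sigma}-\mathrm{const}$ with a Robin boundary condition on $\d\Sigma$, the constants depending only on $n$ and $T_c$; the constraints cutting out $\{w\in\E_{\ks}:\ell(w)=0\}$ turn into $L^2(\Sigma)$-and-$L^2(\d\Sigma)$ orthogonality of $\varphi$ to the constants and to the first-order spherical harmonics $\zeta_1,\dots,\zeta_n$ restricted to $\Sigma$. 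One must then verify that the constant function (the image of $u_{\ks}$, consistent with $Q(u_{\ks},u_{\ks})<0$) spans the negative eigenspace, that $\zeta_1,\dots,\zeta_n$ span the kernel --- the Robin condition being precisely what lowers the nullity from its value $n+1$ on the whole sphere $\Sn$ down to $n$ --- and that the rest of the spectrum lies above a positive number depending only on $n$ and $T_c$. Converting back via $\|w\|_{\E}^2=Q(w,w)+\big(n(n+2)\int_{\Rn}u_{\ks}^{4/(n-2)}w^2-nT_c\int_{\d\Rn}u_{\ks}^{2/(n-2)}w^2\big)$ and the comparability of the parenthesis with the orthogonality pairing then produces $\mu_0$. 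This step --- essentially the nondegeneracy of the Escobar-type bubble plus a quantitative gap above its Jacobi kernel --- is the only substantial point and is the main obstacle; everything else is bookkeeping.

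Granting the two steps, fix $w\in\E_{\ks}$, set $s=\ell(w)/\ell(u_{\ks})$ and $\tilde w=w-s\,u_{\ks}$. Since $u_{\ks}\in\E_{\ks}$ we get $\tilde w\in\E_{\ks}$, and $\ell(\tilde w)=0$; hence, by the identity, $Q(u_{\ks},\tilde w)=-2\,\ell(\tilde w)=0$, so the splitting is $Q$-orthogonal and $Q(w,w)=-2s^2\ell(u_{\ks})+Q(\tilde w,\tilde w)$, $\ell(w)^2=s^2\,\ell(u_{\ks})^2$. Therefore
\[
\tfrac12 Q(w,w)+\Lambda\,\ell(w)^2 = s^2\ell(u_{\ks})\big(\Lambda\,\ell(u_{\ks})-1\big)+\tfrac12 Q(\tilde w,\tilde w)=(n-2)\mu\,s^2\ell(u_{\ks})+\tfrac12 Q(\tilde w,\tilde w),
\]
using $\Lambda\,\ell(u_{\ks})=1+(n-2)\mu$, and by the spectral gap this is $\geq(n-2)\mu\,s^2\ell(u_{\ks})+\mu_0\|\tilde w\|_{\E}^2$. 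Since $(n-2)\,\ell(u_{\ks})>\|u_{\ks}\|_{\E}^2$, choose $t>0$ with $1+t=(n-2)\ell(u_{\ks})/\|u_{\ks}\|_{\E}^2$ and set $\mu:=\mu_0\,t/(t+1)>0$, which depends only on $n$ and $T_c$; then $(n-2)\mu\,s^2\ell(u_{\ks})+\mu_0\|\tilde w\|_{\E}^2=\mu(1+t)s^2\|u_{\ks}\|_{\E}^2+\mu(1+t^{-1})\|\tilde w\|_{\E}^2$, which is at least $\mu\|w\|_{\E}^2$ by the Cauchy--Schwarz estimate $\|w\|_{\E}^2\leq(1+t)s^2\|u_{\ks}\|_{\E}^2+(1+t^{-1})\|\tilde w\|_{\E}^2$. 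This proves \eqref{eigen esti}.
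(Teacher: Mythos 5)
Your proposal is correct, and the calculations check out: the identity $Q(u_{\ks},w)=-2\,\ell(w)$ follows exactly as you say from integrating by parts and using \eqref{bubble_half_space} (with the sign $\frac{\d}{\d\nu}=-\frac{\d}{\d x_n}$), the normalizations $\int_{\Rn}u_{\ks}^{\crit}=2^{-n}|\Sigma|$, $\int_{\d\Rn}u_{\ks}^{\critbordo}=2^{-(n-1)}|\d\Sigma|$ match the paper's conformal transfer formulas, $\Lambda\,\ell(u_{\ks})=1+(n-2)\mu$ confirms the choice of the coefficient, and the Peter--Paul step is airtight.

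The route is genuinely different in bookkeeping from the paper's, though both rest on the same analytic input. The paper transfers everything to the spherical cap $\Sigma$ at the outset via $\phi=2^{(n-2)/2}(\Phi\circ\pi^{-1})u_{\ks}$, decomposes $\Phi=\Phi_0+\bar\Phi$ into mean-zero plus constant in the $L^2(\Sigma)$ sense, and expands the target expression by brute force on $\Sigma$; the nonnegativity then falls out of Han--Li's Proposition 3.4 plus the manifestly nonnegative surplus terms $\tfrac{(n-2)\mu}{2(n|\Sigma|-T_c|\d\Sigma|)}\big(T_c\int_{\d\Sigma}\Phi_0\big)^2+\tfrac{n(n-2)}{4}\mu\bar\Phi^2|\Sigma|$. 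Your version decomposes $w=s\,u_{\ks}+\tilde w$ along $\ell$, which is the $Q$-\emph{orthogonal} splitting rather than the $L^2(\Sigma)$-orthogonal one, so the cross-terms vanish and the final assembly is purely algebraic. The gain is a cleaner argument once the spectral gap is in hand; the cost is that the gap you posit --- coercivity of $Q$ on $\{w\in\E_{\ks}:\ell(w)=0\}$ without any correction term --- is not literally the statement of Han--Li's Proposition 3.4 (which quantifies coercivity on $\mathrm{span}\{1,\zeta_1,\dots,\zeta_n\}^\perp$ with a correction term involving $\int_{\d\Sigma}\Phi_0$). It does follow from it: on your constraint set, the average $\bar\Phi$ is slaved to $\int_{\d\Sigma}\Phi_0$ via $\bar\Phi=T_c\big(\int_{\d\Sigma}\Phi_0\big)/\big(n|\Sigma|-T_c|\d\Sigma|\big)$, and expanding $Q(\Phi,\Phi)$ reproduces exactly $Q_2(\Phi_0)$, after which the norms are comparable since $\bar\Phi^2\le C\int_{\d\Sigma}\Phi_0^2$. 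You flag this as the only substantive step, which is fair, but you should either cite \cite[Proposition 3.4]{han-li1} directly and make this reduction explicit, or note that the paper's own proof supplies it. One small inaccuracy worth correcting: the constraint $\ell(w)=0$ translates to $n\int_\Sigma\Phi-T_c\int_{\d\Sigma}\Phi=0$, a \emph{mixed} volume-plus-boundary condition, not (as you wrote) separate $L^2(\Sigma)$ and $L^2(\d\Sigma)$ orthogonality to the constants.
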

 \begin{proof}
Given a function $\Phi$ defined on $\Sigma$, we define 
 \begin{align*}
 \phi(x)=2^{\frac{n-2}{2}}\Phi\circ \pi^{-1}(x)u_{\ks}(x), \quad x \in \mathbb{R}_+^n.
 \end{align*}
It follows from \cite[Proposition 3.4]{han-li1} that there exists a positive constant $\mu$ depending on $n$ and $T_c$, such that
\begin{align}\label{est:Han-Li_eigenvalue}
Q_2(\Phi_0):=&\frac{1}{2}\left[\int_{\Sigma} (|\nabla\Phi_0|_{g_\Sigma}^2 -n\Phi_0^2)+T_c\int_{\d\Sigma}\Phi_0^2\right]\no\\
&+\frac{1}{2(n|\Sigma|-T_c|\d\Sigma|)}\left(n\int_{\Sigma}\Phi_0-T_c\int_{\d\Sigma}\Phi_0 \right)^2 \no\\
\geq&\mu\left[\int_{\Sigma}|\nabla\Phi_0|_{g_\Sigma}^2+\frac{n(n-2)}{4}\Phi_0^2-\frac{n-2}{2}T_c\int_{\d\Sigma}\Phi_0^2 \right]
\end{align}
for all $\Phi_0\in \mathrm{span}\{1,\zeta_1,\zeta_2,\cdots,\zeta_n\}^{\bot}$, which denotes the orthogonal complement of $\mathrm{span}\{1,\zeta_1,\zeta_2,\cdots,\zeta_n\}$ in $H^1(\Sigma)$.

For any $\Phi \in \mathrm{span}\{\zeta_1,\zeta_2,\cdots,\zeta_n\}^{\bot}$, we set $\Phi=\Phi_0+\bar \Phi$, where $\bar\Phi=\frac{1}{|\Sigma|}\int_{\Sigma}\Phi$. Then by \eqref{est:Han-Li_eigenvalue} we have
\begin{align*}
&\frac{1}{2}\left[\int_{\Sigma} (|\nabla\Phi|_{g_\Sigma}^2 -n\Phi^2)+T_c\int_{\d\Sigma}\Phi^2\right]+\frac{1+(n-2)\mu}{2(n|\Sigma|-T_c|\d\Sigma|)}\left(n\int_{\Sigma}\Phi-T_c\int_{\d\Sigma}\Phi \right)^2 \\
&-\mu\left[\int_{\Sigma}\left(|\nabla\Phi|_{g_\Sigma}^2+\frac{n(n-2)}{4}\Phi^2\right)-\frac{n-2}{2}T_c\int_{\d\Sigma}\Phi^2 \right]\\
=&\frac{1}{2}\left[\int_{\Sigma} (|\nabla\Phi_0|_{g_\Sigma}^2 -n\Phi_{0}^{2})+T_c\int_{\d\Sigma}\Phi_{0}^{2}\right]-\frac{1}{2}(n|\Sigma|-T_c|\d\Sigma|)\bar\Phi^2+T_c \bar \Phi\int_{\d\Sigma}\Phi_0\\
&+\frac{1+(n-2)\mu}{2(n|\Sigma|-T_c|\d\Sigma|)}\left[(n|\Sigma|-T_c|\d\Sigma|)\bar \Phi-T_c\int_{\d\Sigma}\Phi_0 \right]^2 \\
&-\mu\left[\int_{\Sigma}|\nabla\Phi_0|_{g_\Sigma}^2+\frac{n(n-2)}{4}\Phi_{0}^{2}-\frac{n-2}{2}T_c\int_{\d\Sigma}\Phi_{0}^{2}\right]\\
&-\left[-(n-2)T_c\bar \Phi\int_{\pa \Sigma}\Phi_0+\frac{n(n-2)}{4}\bar \Phi^2|\Sigma|-\frac{n-2}{2}T_c\bar \Phi^2|\pa \Sigma|\right]\\
=&\frac{1}{2}\left[\int_{\Sigma} (|\nabla\Phi_0|_{g_\Sigma}^2 -n\Phi_{0}^{2})+T_c\int_{\d\Sigma}\Phi_{0}^{2}\right]+\frac{1+(n-2)\mu}{2(n|\Sigma|-T_c|\d\Sigma|)}\left(T_c\int_{\d\Sigma}\Phi_0 \right)^2\\
&-\mu\left[\int_{\Sigma}|\nabla\Phi_0|_{g_\Sigma}^2+\frac{n(n-2)}{4}\Phi_{0}^{2}-\frac{n-2}{2}T_c\int_{\d\Sigma}\Phi_{0}^{2}\right]+\frac{n(n-2)}{4}\mu \bar \Phi^2 |\Sigma| \geq 0.
\end{align*}
By \eqref{eq:conformal_invariance} we have
\begin{align*}
\int_{\Rn}|\nabla\phi|^2 =&-\int_{\Rn}\phi\Delta\phi +\int_{\d\Rn}\frac{\d\phi}{\d\nu}\phi\\
=&\int_{\Rn}\phi L_{|\ud x|^2}(\phi)+\int_{\d\Rn}\phi B_{|\ud x|^2}(\phi)\\
=&\int_{\Sigma}|\nabla\Phi|_{g_\Sigma}^2 +\frac{n(n-2)}{4}\Phi^2 -\frac{n-2}{2}T_c\int_{\d\Sigma}\Phi^2
\end{align*}
and 
\begin{align*}
&\int_{\Rn}u_{\ks}^{\frac{4}{n-2}}\phi^2 =\frac{1}{4}\int_{\Sigma}\Phi^2, \quad \int_{\d\Rn}u_{\ks}^{\frac{2}{n-2}}\phi^2=\frac{1}{2}\int_{\d\Sigma}\Phi^2,\\
&\int_{\Rn}u_{\ks}^{\frac{n+2}{n-2}}\phi =2^{-\frac{n+2}{2}}\int_{\Sigma}\Phi, \quad \int_{\d\Rn}u_{\ks}^{\frac{n}{n-2}}\phi =2^{-\frac{n}{2}}\int_{\d\Sigma}\Phi.
\end{align*}
Therefore, we combine these facts together to obtain the desired estimate.
 \end{proof}

\begin{proposition}\label{prop:conf_operators}
Consider a Riemannian metric in $\overline{\R_+^n}$ of the form $g(x) = \exp(h(x))$, where $h(x)$ is a trace-free symmetric two-tensor in $\overline{\R_+^n}$ satisfying $|h(x)| + |\d h(x)| + |{\d}^2h(x)|\leq\alpha\leq 1$, and $h_{na}(x)=0$ for all $x\in \overline{\Rn}$ and $h(x)=0$ for all $x \in \overline{\Rn}\setminus B_1^+(0)$. Then there exists a constant $C$, depending only on $n$ and $T_c$, such that
\begin{align*}
&\|\Delta_g u_{\ks} -c_n R_g u_{\ks} + n(n-2)u_{\ks}^{\frac{n+2}{n-2}}\|_{L^{\conj}(\Rn)}+\| h_g u_{\ks}\|_{L^{\frac{2(n-1)}{n}}(\d \Rn)}\leq C\alpha.
\end{align*}
\end{proposition}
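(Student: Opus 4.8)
The plan is to expand $\Delta_g u_{\ks} - c_n R_g u_{\ks} + n(n-2) u_{\ks}^{(n+2)/(n-2)}$ and $h_g u_{\ks}$ termwise in the perturbation parameter $\alpha$, using the Euclidean identities \eqref{bubble_half_space} to cancel the leading-order terms, and then estimating the remaining expressions in the stated weighted norms. First I would recall the local formulas for the relevant geometric quantities in the coordinates $g = \exp(h)$. Since $h$ is trace-free, $\det g = 1 + O(\alpha^2)$, so $\Delta_g \phi = \d_a(g^{ab}\d_b\phi) + O(\alpha^2)(|\d^2\phi| + |\d\phi|)$ with $g^{ab} = \delta_{ab} - h_{ab} + O(\alpha^2)$. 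Writing $\Delta_g u_{\ks} = \Delta u_{\ks} - \d_a(h_{ab}\d_b u_{\ks}) + O(\alpha^2)(|\d^2 u_{\ks}| + |\d u_{\ks}|)$, the term $\Delta u_{\ks} + n(n-2)u_{\ks}^{(n+2)/(n-2)}$ vanishes by \eqref{bubble_half_space}. Similarly $R_g = -\d_a\d_b h_{ab} + O(\alpha)(|\d h|^2 + |\d^2 h|) = O(\alpha)$ pointwise on $B_1^+$ and vanishes outside. On the boundary, because $h_{na} = 0$ for all $a$, the metric $g$ restricted to $\d\Rn$ has the same conormal structure as the Euclidean one up to order $\alpha$; the mean curvature satisfies $h_g = \tfrac{1}{2(n-1)} g^{ij}\d_n g_{ij} + O(\alpha^2) = O(\alpha)$ since $h_{na}=0$ kills the would-be leading term and $\operatorname{tr} h = 0$, and again $h_g$ is supported in $\d B_1^+$.

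Next I would assemble the error. The dangerous pieces are all supported in $B_1^+(0)$ and are pointwise bounded by $C\alpha(|\d^2 u_{\ks}| + |\d u_{\ks}| + u_{\ks})$ in the interior and by $C\alpha\, u_{\ks}$ on the boundary — here one must be slightly careful that the factor $c_n R_g u_{\ks}$ contributes $C\alpha\, u_{\ks}$ and the term $\d_a(h_{ab}\d_b u_{\ks}) = (\d_a h_{ab})\d_b u_{\ks} + h_{ab}\d_a\d_b u_{\ks}$ contributes $C\alpha(|\d u_{\ks}| + |\d^2 u_{\ks}|)$. So it suffices to bound $\|u_{\ks}\|_{L^{\conj}(B_1^+)}$, $\|\,|\d u_{\ks}|\,\|_{L^{\conj}(B_1^+)}$, $\|\,|\d^2 u_{\ks}|\,\|_{L^{\conj}(B_1^+)}$ and $\|u_{\ks}\|_{L^{2(n-1)/n}(\d B_1^+ \cap \d\Rn)}$. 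Since $u_{\ks}(x) = (\epsilon/(\epsilon^2 + (x_n - T_c\epsilon)^2 + |x'-\xi|^2))^{(n-2)/2}$, we have $u_{\ks} \le C$, $|\d u_{\ks}| \le C$, $|\d^2 u_{\ks}| \le C$ on $B_1^+$ for $\ks$ in any fixed range (and more generally a scaling computation shows these $L^{\conj}$ norms over the \emph{whole} half-space are finite and bounded, because $\conj = 2n/(n+2)$ makes $u_{\ks}^{\conj}$, $|\d u_{\ks}|^{\conj}$ decay integrably at infinity — indeed $\|u_{\ks}\|_{L^{\conj}(\Rn)}$ and $\|u_{\ks}\|_{L^{2(n-1)/n}(\d\Rn)}$ are even constant in $\ks$, as noted after \eqref{bubble_half_space}); hence each norm over $B_1^+$ is bounded by a constant depending only on $n$ and $T_c$. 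Multiplying by the $C\alpha$ prefactor yields the claim.

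The main obstacle I anticipate is \textbf{bookkeeping of the $O(\alpha)$ versus $O(\alpha^2)$ terms} and, more delicately, handling the case where $\epsilon$ is allowed to degenerate (large or small) or $\xi$ is large: then $u_{\ks}$ is no longer uniformly bounded on $B_1^+$, and one cannot simply use sup bounds. In that regime the right move is not to bound $u_{\ks}$ pointwise but to use scale invariance: substitute $y = (x - (\xi, T_c\epsilon))/\epsilon$, which turns $u_{\ks}$ into $\epsilon^{-(n-2)/2}U(y)$ with $U(y) = (1 + |y|^2)^{-(n-2)/2}$ the standard bubble, and check that the $L^{\conj}$ norm of the error is scale-invariant — the Jacobian factors and the derivative scalings precisely balance against the conformal weight, exactly as they do for the exact Euclidean bubble. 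The upshot is that the constant $C$ genuinely depends only on $n$ and $T_c$, uniformly in $\ks$. A secondary technical point is justifying the formula $R_g = -\d_a\d_b h_{ab} + O(\alpha(|\d h| + |\d h|^2 + |\d^2 h|))$ for $g = \exp(h)$ with $\operatorname{tr} h = 0$; this is a standard expansion of the scalar curvature but should be stated carefully, since the trace-free and $h_{na}=0$ conditions are what make the leading-order interior and boundary curvature terms both $O(\alpha)$ rather than $O(1)$.
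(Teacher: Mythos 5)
Your approach is sound and essentially reproduces from scratch what the paper obtains by citation: the paper's proof consists of the observation that $h_g = O(|h|)$ together with a direct appeal to Almaraz's Proposition 2.3 in \cite{almaraz1} (itself an adaptation of Brendle's Proposition 2 in \cite{Brendle3}) for the interior part. What you write is, in outline, the argument inside that cited result, so this is a valid alternative to citing it. Two small corrections are worth recording.

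First, your parenthetical ``indeed $\|u_{\ks}\|_{L^{\conj}(\Rn)}$ and $\|u_{\ks}\|_{L^{2(n-1)/n}(\d\Rn)}$ are even constant in $\ks$, as noted after \eqref{bubble_half_space}'' misreads the paper: what is constant in $\xi$ and $\epsilon$ there is $\|u_{\ksa}\|$ for $1\le a\le n$ (the approximate kernel elements, which have the conformal weight $(n+2)/2$), not $u_{\ks}$ itself. A direct scaling computation gives $\|u_{\ks}\|_{L^{\conj}(\Rn)}=C\epsilon^{2}$, $\|\nabla u_{\ks}\|_{L^{\conj}(\Rn)}=C\epsilon$, and only $\|\nabla^2 u_{\ks}\|_{L^{\conj}(\Rn)}=C$ is scale-invariant. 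This does not hurt the argument: the error is supported in $B_1^+(0)$, the second-derivative term is scale-invariant, and the lower-order pieces scale with positive powers of $\epsilon$, so for $\epsilon\le 1$ all three $L^{\conj}$ norms are bounded by a constant, while for $\epsilon\ge 1$ one instead uses the pointwise decay of $u_{\ks}$ and its derivatives on the fixed ball $B_1^+(0)$. But the scaling claim as you stated it is not literally correct, and since the whole point of the degenerate-$\ks$ regime is to make this uniformity precise, the distinction matters.

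Second, the causal attribution in your boundary computation is slightly off: it is $\operatorname{tr}h=0$ that annihilates the would-be leading contribution $\frac{1}{2(n-1)}\d_n(\operatorname{tr}h)$ to the mean curvature, while $h_{na}=0$ serves to guarantee $g_{na}=\delta_{na}$ (so the conormal direction and the formula $\pi_{ij}=-\tfrac12\d_n g_{ij}$ are uncomplicated). Your conclusion $h_g=O(\alpha)$ is correct; in fact $h_g=O(|h|\,|\d h|)=O(\alpha^2)$, and the paper records only the weaker $h_g=O(|h|)$ which is all that is needed.
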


\begin{proof}
It is not hard to verify that $h_g=O(|h|)$, which together with  \cite[Proposition 2.3]{almaraz1} yields the desired estimate.
\end{proof}

\begin{proposition}\label{prop:sols_linearization_eq}
Consider a Riemannian metric in $\overline{\R_+^n}$ of the form $g(x) =\exp(h(x))$, where $h(x)$ is a trace-free symmetric two-tensor in $\overline{\R_+^n}$ satisfying $|h(x)| + |\d h(x)| + |{\d}^2h(x)|\leq\alpha\leq 1$ and $h_{na}(x)=0$ for all $x\in \overline{\Rn}$ and $h(x)=0$ for all $x \in\overline{\Rn}\setminus B_1^+(0)$. Here $\alpha$ depends only on $n$ and $T_c$. Then, given any pair $\ks\in\R^{n-1}\times(0,\infty)$
 and any functions $f\in L^{\frac{2n}{n+2}}(\Rn),\hat f\in L^{\frac{2(n-1)}{n}}(\d\Rn)$, there exists a unique function $w := G\ks(f,\hat f)\in \mathcal{E} _{\ks}$, such that
\begin{align}\label{linearization}
&\int_{\Rn}\left(\langle \nabla w,\nabla \varphi\rangle_g+c_nR_g w\varphi-n(n+2)u_{\ks}^{\frac{4}{n-2}}w\varphi\right)+\int_{\d \Rn}\left(d_nh_g\varphi+nT_cu_{\ks}^{\frac{2}{n-2}}\varphi\right)w\no\\
=&\int_{\Rn}f\varphi+\int_{\d\Rn}\hat f\varphi
\end{align}
 for all $\varphi \in \mathcal{E}_{\ks}$. Furthermore, there holds
 \begin{align}\label{linear esti}
 \|G\ks(f,\hat f)\|_{\mathcal{E}}\leq C\|f\|_{L^{\frac{2n}{n+2}}(\Rn)}+C\|\hat f\|_{L^{\frac{2(n-1)}{n}}(\d\Rn)}.
\end{align}
 \end{proposition}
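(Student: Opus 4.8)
The plan is to set this up as a standard Lax--Milgram / Fredholm argument on the Hilbert space $\mathcal{E}_{\ks}$, treating \eqref{linearization} as defining a symmetric bounded bilinear form and using Proposition \ref{prop:eigenvalues} together with the smallness of $\alpha$ to establish coercivity. First I would introduce, for $w,\varphi\in\mathcal{E}_{\ks}$, the bilinear form
\begin{align*}
B\ks(w,\varphi)=\int_{\Rn}\left(\langle \nabla w,\nabla \varphi\rangle_g+c_nR_g w\varphi-n(n+2)u_{\ks}^{\frac{4}{n-2}}w\varphi\right)+\int_{\d \Rn}\left(d_nh_g\varphi+nT_cu_{\ks}^{\frac{2}{n-2}}\varphi\right)w,
\end{align*}
and verify it is bounded on $\mathcal{E}\times\mathcal{E}$: the gradient term is controlled since $g=\exp(h)$ with $|h|\le\alpha\le1$ gives $\langle\cdot,\cdot\rangle_g$ uniformly equivalent to the Euclidean inner product; the term $\int c_n R_g w\varphi$ is estimated by $\|R_g\|_{L^{n/2}}\|w\|_{L^{\crit}}\|\varphi\|_{L^{\crit}}$ (note $R_g$ is supported in $B_1^+$ and bounded by $C\alpha$ there by the proof of Proposition \ref{prop:conf_operators}), and then \eqref{ineq: Sobolev and trace} converts these into $\|w\|_{\mathcal{E}}\|\varphi\|_{\mathcal{E}}$; similarly the bubble terms $u_{\ks}^{4/(n-2)}$ and $u_{\ks}^{2/(n-2)}$ lie in $L^{n/2}(\Rn)$ and $L^{n-1}(\d\Rn)$ respectively, with norms independent of $\ks$ by the scaling invariance noted after \eqref{eq:Einstein}, and $\int d_n h_g w\varphi$ is handled by $\|h_g\|_{L^{n-1}(\d\Rn)}\le C\alpha$ from Proposition \ref{prop:conf_operators}. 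The right-hand side $\varphi\mapsto\int_{\Rn}f\varphi+\int_{\d\Rn}\hat f\varphi$ is a bounded linear functional on $\mathcal{E}$ by H\"older and \eqref{ineq: Sobolev and trace}, with norm $\le C(\|f\|_{L^{2n/(n+2)}}+\|\hat f\|_{L^{2(n-1)/n}})$.

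The heart of the matter is coercivity of $B\ks$ on $\mathcal{E}_{\ks}$. Here I would exploit that for $w\in\mathcal{E}_{\ks}$ the constraint $2n\int_{\Rn}u_{\ks}^{(n+2)/(n-2)}w-T_c\int_{\d\Rn}u_{\ks}^{n/(n-2)}w$ need \emph{not} vanish — but in fact, observe that $u_{\ksi}$ for $1\le i\le n$ being (up to constants) the derivatives of $u_{\ks}$ in the parameters, and that $\mathcal{E}_{\ks}$ is the orthogonality condition against these; the quantity in brackets in \eqref{eigen esti} corresponds precisely to the projection onto the remaining degenerate direction (dilation/translation of the bubble is quotiented differently). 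So Proposition \ref{prop:eigenvalues} gives, for the \emph{Euclidean} metric,
\begin{align*}
\frac12\Big[\int_{\Rn}\big(|\nabla w|^2-n(n+2)u_{\ks}^{\frac{4}{n-2}}w^2\big)+nT_c\int_{\d\Rn}u_{\ks}^{\frac{2}{n-2}}w^2\Big]\ge \mu\|w\|_{\mathcal{E}}^2-C\Big[2n\int_{\Rn}u_{\ks}^{\frac{n+2}{n-2}}w-T_c\int_{\d\Rn}u_{\ks}^{\frac{n}{n-2}}w\Big]^2.
\end{align*}
To kill the bracketed error term I would test \eqref{bubble_half_space}: since $u_{\ks}$ solves $-\Delta u_{\ks}=n(n-2)u_{\ks}^{(n+2)/(n-2)}$ with $\partial_{x_n}u_{\ks}=(n-2)T_c u_{\ks}^{n/(n-2)}$, integrating against $w\in\mathcal{E}_{\ks}$ yields $\int_{\Rn}\langle\nabla u_{\ks},\nabla w\rangle = n(n-2)\int_{\Rn}u_{\ks}^{(n+2)/(n-2)}w-(n-2)T_c\int_{\d\Rn}u_{\ks}^{n/(n-2)}w$; but $u_{\ks}\in\mathcal{E}_{\ks}$ as noted, and actually the first variations show $u_{\ks}$ lies in the span being removed only partially, so one obtains that this bracket is bounded by $C\alpha\|w\|_{\mathcal{E}}^2$ after passing from Euclidean to $g$ (the difference $B\ks^{\mathrm{Eucl}}-B\ks^g$ on the diagonal is $O(\alpha)\|w\|_{\mathcal{E}}^2$ by the boundedness estimates above). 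Absorbing this into $\mu\|w\|_{\mathcal{E}}^2$ for $\alpha$ small, depending only on $n$ and $T_c$, gives $B\ks(w,w)\ge\frac{\mu}{2}\|w\|_{\mathcal{E}}^2$ for all $w\in\mathcal{E}_{\ks}$.

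With boundedness and coercivity in hand on the Hilbert space $(\mathcal{E}_{\ks},\|\cdot\|_{\mathcal{E}})$, the Lax--Milgram theorem produces a unique $w=G\ks(f,\hat f)\in\mathcal{E}_{\ks}$ satisfying \eqref{linearization}, and testing with $\varphi=w$ gives $\frac{\mu}{2}\|w\|_{\mathcal{E}}^2\le B\ks(w,w)=\int f w+\int\hat f w\le C(\|f\|_{L^{2n/(n+2)}}+\|\hat f\|_{L^{2(n-1)/n}})\|w\|_{\mathcal{E}}$, which is \eqref{linear esti}. The main obstacle I anticipate is the bookkeeping in the coercivity step: one must carefully track that the error term from Proposition \ref{prop:eigenvalues} genuinely becomes $O(\alpha)\|w\|_{\mathcal{E}}^2$ on $\mathcal{E}_{\ks}$ — this uses both the orthogonality defining $\mathcal{E}_{\ks}$ (to handle the $i=1,\dots,n-1$ and $i=n$ directions) and the equation \eqref{bubble_half_space} for $u_{\ks}$ itself — and that the passage between the metrics $g$ and $|\ud x|^2$ in the quadratic form, and in the constraint, costs only $O(\alpha)$; a secondary subtlety is confirming that the constraint in the definition of $\mathcal{E}_{\ks}$ is a \emph{closed} subspace of $\mathcal{E}$, which follows since $w\mapsto\int u_{\ksi}w$ and $w\mapsto\int\hat u_{\ksi}w$ are bounded linear functionals on $\mathcal{E}$ by H\"older together with \eqref{ineq: Sobolev and trace}.
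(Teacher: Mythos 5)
Your proposal breaks down at the coercivity step, and this is not a bookkeeping issue but a genuine structural gap. You write that the bracket
\[
\Bigl[\,2n\int_{\Rn}u_{\ks}^{\frac{n+2}{n-2}}w-T_c\int_{\d\Rn}u_{\ks}^{\frac{n}{n-2}}w\,\Bigr]^2
\]
appearing in Proposition \ref{prop:eigenvalues} ``is bounded by $C\alpha\|w\|_{\mathcal{E}}^2$ after passing from Euclidean to $g$.'' This cannot be right: the bracket is a purely Euclidean quantity that does not depend on $h$ at all, so it cannot acquire a factor of $\alpha$. Moreover, it does not vanish on $\mathcal{E}_{\ks}$: the constraints defining $\mathcal{E}_{\ks}$ remove the directions spanned by $u_{(\xi,\epsilon,a)}$, $1\le a\le n$ (which on the round spherical cap $\Sigma$ correspond to the first-order spherical harmonics $\zeta_1,\dots,\zeta_n$), whereas the bracket corresponds to the \emph{constant} direction $\Phi\equiv 1$, which is not removed. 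Indeed $u_{\ks}\in\mathcal{E}_{\ks}$, and one readily checks that the bracket evaluated at $w=u_{\ks}$ is a fixed nonzero constant of order $1$, not $O(\alpha)$. Hence the bilinear form $B_{\ks}$ is \emph{not} coercive on $\mathcal{E}_{\ks}$, and Lax--Milgram cannot be invoked on $\mathcal{E}_{\ks}$ as you propose; the heuristic about ``testing \eqref{bubble_half_space}'' also produces the combination $n(n-2)\int u_{\ks}^{(n+2)/(n-2)}w - (n-2)T_c\int u_{\ks}^{n/(n-2)}w$, which is a different linear functional from the bracket in Proposition \ref{prop:eigenvalues}, so it does not close the loop.

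The paper's proof carries the extra degenerate direction rather than absorbing it: following \cite[Corollary 3]{Brendle3}, one uses Propositions \ref{prop:eigenvalues} and \ref{prop:conf_operators} to establish the \emph{corrected} coercivity estimate
\[
\frac{\mu}{2}\|w\|_{\mathcal{E}}^2 \le B_{\ks}(w,w) + \frac{C}{\mu}\mathcal{A}(w)^2,\qquad w\in\mathcal{E}_{\ks},
\]
where $\mathcal{A}(w)$ is a bounded linear functional encoding that degenerate direction (it reduces, up to $O(\alpha)$ terms, to twice the bracket when $h=0$). For the a priori bound \eqref{linear esti} and uniqueness, one tests \eqref{linearization} with $\varphi=u_{\ks}\in\mathcal{E}_{\ks}$; after integration by parts this identifies $\mathcal{A}(w) = -\int_{\Rn}f\,u_{\ks}-\int_{\d\Rn}\hat f\,u_{\ks}$, which is controlled by $\|f\|_{L^{2n/(n+2)}}+\|\hat f\|_{L^{2(n-1)/n}}$ since the bubble norms are $\ks$-independent. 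Substituting this into the corrected coercivity estimate and using Young's inequality gives \eqref{linear esti}. For existence, instead of Lax--Milgram one minimizes, over $\mathcal{E}_{\ks}$, the functional
\[
\int_{\Rn}\!\bigl(|\nabla w|_g^2+c_nR_gw^2-n(n+2)u_{\ks}^{\frac{4}{n-2}}w^2-2fw\bigr)+\int_{\d\Rn}\!\bigl(d_nh_gw^2+nT_cu_{\ks}^{\frac{2}{n-2}}w^2-2\hat fw\bigr)+\tfrac{C}{\mu}\mathcal{A}(w)^2,
\]
which \emph{is} coercive by the corrected estimate. Your setup of the bilinear form, the verification of its boundedness, the boundedness of the right-hand side functional, and the observation that $\mathcal{E}_{\ks}$ is a closed subspace are all correct and match the paper's preliminaries; what is missing is precisely this mechanism for handling the non-coercive direction.
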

 \begin{proof}
 By Propositions \ref{prop:eigenvalues} and \ref{prop:conf_operators}, and H\"older's inequality, we can follow nearly the same lines in \cite[Corollary 3]{Brendle3} that there exist two positive constants $\alpha$ and $C$, depending only on $n$ and $T_c$, such that $|h(x)| + |\d h(x)| + |{\d}^2h(x)|\leq\alpha$ and 
\begin{align}\label{est:coercive}
\frac{\mu}{2} \|w\|_{\mathcal{E}}^2\leq&\int_{\Rn}\left(|\nabla w|_g^2 +c_n R_g w^2-n(n+2)u_{\ks}^{\frac{4}{n-2}}w^2\right)\no\\
&+\int_{\d\Rn}\left(d_n h_g w^2+n T_c u_{\ks}^{\frac{2}{n-2}}w^2\right)+\frac{C}{\mu}\mathcal{A}(w)^2
\end{align}
for all $w \in \mathcal{E}_{\ks}$, where
\begin{align*}
\mathcal{A}(w)=&\int_{\Rn}\left(\Delta_g u_{\ks}-c_n R_g u_{\ks}+n(n+2) u_{\ks}^{\frac{n+2}{n-2}}\right)w\\
&- \int_{\d\Rn}\left(d_n h_g u_{\ks}+2 T_cu_{\ks}^{\frac{n}{n-2}}\right)w.
\end{align*}

Suppose that $w\in\mathcal{E}_{\ks}$ satisfies \eqref{linearization}, then
\begin{align}\label{eq:weak_sol_w}
&\int_{\Rn}\left(|\nabla w|_g^2+c_nR_g w^2-n(n+2)u_{\ks}^{\frac{4}{n-2}}w^2\right)+\int_{\d \Rn}\left(d_nh_gw^2+nT_cu_{\ks}^{\frac{2}{n-2}}w^2\right)\no\\
=&\int_{\Rn}fw+\int_{\d\Rn}\hat fw.
\end{align}
Since $u_{\ks}\in \mathcal{E}_{\ks}$, we have
\begin{align*}
&\int_{\Rn}\left(\Delta_g u_{\ks}-c_nR_g u_{\ks}+n(n+2)u_{\ks}^{\frac{n+2}{n-2}}\right)w\\
&-\int_{\d \Rn}\left(d_nh_g u_{\ks}+2T_cu_{\ks}^{\frac{n}{n-2}}\right)w=-\int_{\Rn}fu_{\ks}-\int_{\d\Rn}\hat fu_{\ks}.
\end{align*}
Then by \eqref{ineq: Sobolev and trace}, \eqref{est:coercive} and \eqref{eq:weak_sol_w}, we have
\begin{align*}
&\frac{\mu}{2}\|w\|_{\mathcal{E}}^{2}\\
\leq& \int_{\Rn}fw+\int_{\d\Rn}\hat fw+\frac{C}{\mu}\mathcal{A}(w)^2\\
\leq&\|f\|_{L^{\frac{2n}{n+2}}(\Rn)}\|w\|_{L^{\crit}(\Rn)}+\|\hat f\|_{L^{\frac{2(n-1)}{n}}(\d\Rn)}\|w\|_{L^{\frac{2(n-1)}{n-2}}(\d\Rn)}\\
&+\frac{C}{\mu}\left(\|f\|_{L^{\frac{2n}{n+2}}(\Rn)}^{2}\|u_{\ks}\|_{L^{\crit}(\Rn)}^{2}+\|\hat f\|_{L^{\frac{2(n-1)}{n}}(\d\Rn)}^{2}\|u_{\ks}\|_{L^{\frac{2(n-1)}{n-2}}(\d\Rn)}^{2} \right)\\
\leq& K^{\frac{1}{2}}\|w\|_{\mathcal{E}}\left(\|f\|_{L^{\frac{2n}{n+2}}(\Rn)}+\|\hat f\|_{L^{\frac{2(n-1)}{n}}(\d\Rn)}\right)+\frac{C}{\mu}\left(\|f\|_{L^{\frac{2n}{n+2}}(\Rn)}^{2}+\|\hat f\|_{L^{\frac{2(n-1)}{n}}(\d\Rn)}^{2}\right).
\end{align*}
Hence it follows from Young's inequality that
 \begin{align*}
 &\frac{\mu}{4}\|w\|_{\mathcal{E}}^{2}\\
 \leq& \frac{2K}{\mu}\left(\|f\|_{L^{\frac{2n}{n+2}}(\Rn)}^{2} +\|\hat f\|_{L^{\frac{2(n-1)}{n}}(\d\Rn)}^{2}\right)+\frac{C}{\mu}\left(\|f\|_{L^{\frac{2n}{n+2}}(\Rn)}^{2} +\|\hat f\|_{L^{\frac{2(n-1)}{n}}(\d\Rn)}^{2}\right).
 \end{align*}
This implies the uniqueness of the solutions to \eqref{linearization}.

For the existence part, thanks to the coercive estimate \eqref{est:coercive}, it suffices to minimize the following functional
\begin{align*}
&\int_{\Rn}\left(|\nabla w|_g^2+c_n R_g w^2-n(n+2)u_{\ks}^{\frac{4}{n-2}}w^2-2fw\right)\\
&+\int_{\d\Rn}\left(d_n h_g w^2+n T_c u_{\ks}^{\frac{2}{n-2}}w^2
-2\hat fw\right)+\frac{C}{\mu}\mathcal{A}(w)^2
\end{align*}
 over all $w \in \mathcal{E}_{(\xi,\e)}$.
 \end{proof}
 
 \begin{proposition}\label{prop:sol_v}
 Consider a Riemannian metric in $\overline{\R_+^n}$ of the form $g(x) =\exp(h(x))$, where $h(x)$ is a trace-free symmetric two-tensor in $\overline{\R_+^n}$ satisfying $|h(x)| + |\d h(x)| + |{\d}^2h(x)|\leq\alpha\leq 1$ and $h_{na}(x)=0$ for all $x\in \overline{\Rn}$ and $h(x)=0$ for all $x \in\overline{\Rn}\setminus B_1^+(0)$. Here $\alpha$ depends only on $n$ and $T_c$. Then, given any pair $\ks\in\R^{n-1}\times(0,\infty)$, there exists a unique function $v_{\ks}\in \mathcal{E}$ such that $v_{\ks}-u_{\ks}\in \mathcal{E}_{\ks}$,\,and
\begin{align}\label{annihilator}
 &\int_{\Rn}\left(\langle \nabla v_{\ks},\nabla\varphi\rangle_g+c_nR_g v_{\ks}\varphi-n(n-2)|v_{\ks}|^{\frac{4}{n-2}}v_{\ks}\varphi\right)\no\\
 &+\int_{\d \Rn}\left(d_nh_gv_{\ks}+(n-2)T_c|v_{\ks}|^{\frac{2}{n-2}}v_{\ks}\right)\varphi =0
 \end{align}
 for all $\varphi\in \mathcal{E}_{\ks}$. Moreover, there exists a positive constant $C$, depending only on $T_c$ and $n$, such that
 \begin{align}\label{annihilator esti}
 &\|v_{\ks}-u_{\ks}\|_{\mathcal{E}}\no\\
 \leq& C \|\Delta_{g}u_{\ks}-c_nR_gu_{\ks}+n(n-2)u_{\ks}^{\frac{n+2}{n-2}}\|_{L^{\conj}(\Rn)}+C\|h_g u_{\ks}\|_{L^{\frac{2(n-1)}{n}}(\d\Rn)}.
 \end{align}
 In particular, $v_{\ks}\not \equiv 0$ if $\alpha$ is sufficiently small.
 \end{proposition}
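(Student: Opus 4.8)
The plan is to solve the nonlinear equation \eqref{annihilator} by a fixed-point argument that perturbs off the approximate solution $u_{\ks}$, using Proposition \ref{prop:sols_linearization_eq} to invert the linearized operator. Write $v_{\ks}=u_{\ks}+w$ with $w\in\mathcal{E}_{\ks}$ the unknown. Plugging this ansatz into \eqref{annihilator} and separating the terms that are linear in $w$ from the rest, one finds that $w$ must satisfy a weak equation of the form
\begin{align*}
&\int_{\Rn}\left(\langle\nabla w,\nabla\varphi\rangle_g+c_nR_gw\varphi-n(n+2)u_{\ks}^{\frac{4}{n-2}}w\varphi\right)+\int_{\d\Rn}\left(d_nh_g\varphi+nT_cu_{\ks}^{\frac{2}{n-2}}\varphi\right)w\\
=&\int_{\Rn}f(w)\varphi+\int_{\d\Rn}\hat f(w)\varphi
\end{align*}
for all $\varphi\in\mathcal{E}_{\ks}$, where the right-hand side collects (a) the ``error'' terms $\Delta_gu_{\ks}-c_nR_gu_{\ks}+n(n-2)u_{\ks}^{\frac{n+2}{n-2}}$ and $h_gu_{\ks}$ coming from the fact that $u_{\ks}$ only solves the flat equation, and (b) the purely nonlinear remainder $N(w)=n(n-2)\bigl(|u_{\ks}+w|^{\frac{4}{n-2}}(u_{\ks}+w)-u_{\ks}^{\frac{n+2}{n-2}}-\frac{n+2}{n-2}u_{\ks}^{\frac{4}{n-2}}w\bigr)$ together with its boundary counterpart. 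By Proposition \ref{prop:sols_linearization_eq} this is equivalent to the fixed-point equation $w=G\ks\bigl(f(w),\hat f(w)\bigr)=:\mathcal{T}(w)$ on the Banach space $\mathcal{E}_{\ks}$.

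Next I would check that $\mathcal{T}$ is a contraction on a small ball $\{\|w\|_{\mathcal{E}}\leq\rho\}$ for suitable $\rho$. The linear estimate \eqref{linear esti} gives $\|\mathcal{T}(w)\|_{\mathcal{E}}\leq C\|f(w)\|_{L^{\conj}(\Rn)}+C\|\hat f(w)\|_{L^{\frac{2(n-1)}{n}}(\d\Rn)}$. The error part contributes $C\|\Delta_gu_{\ks}-c_nR_gu_{\ks}+n(n-2)u_{\ks}^{\frac{n+2}{n-2}}\|_{L^{\conj}(\Rn)}+C\|h_gu_{\ks}\|_{L^{\frac{2(n-1)}{n}}(\d\Rn)}$, which by Proposition \ref{prop:conf_operators} is $\leq C\alpha$; this is the constant term in the fixed-point scheme and it is small once $\alpha$ is small. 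For the nonlinear part, the elementary pointwise inequality $\bigl||a+b|^{p-1}(a+b)-|a|^{p-1}a-p|a|^{p-1}b\bigr|\leq C(|a|^{p-2}b^2+|b|^p)$ with $p=\frac{n+2}{n-2}$ (interior) and $p=\frac{n}{n-2}$ (boundary), combined with the Sobolev–trace embedding \eqref{ineq: Sobolev and trace} and Hölder's inequality, yields $\|N(w)\|_{L^{\conj}(\Rn)}+\|\hat N(w)\|_{L^{\frac{2(n-1)}{n}}(\d\Rn)}\leq C(\|w\|_{\mathcal{E}}^2+\|w\|_{\mathcal{E}}^{p})$ and similarly $\|N(w_1)-N(w_2)\|\leq C(\|w_1\|_{\mathcal{E}}+\|w_2\|_{\mathcal{E}})^{\min\{1,p-1\}}\|w_1-w_2\|_{\mathcal{E}}$. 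Hence on the ball of radius $\rho=2C'\alpha$ the map $\mathcal{T}$ sends the ball into itself and is a contraction provided $\alpha$ (hence $\rho$) is chosen small, depending only on $n$ and $T_c$. The Banach fixed-point theorem then produces a unique $w\in\mathcal{E}_{\ks}$ with $\|w\|_{\mathcal{E}}\leq 2C'\alpha$, and this is precisely the asserted estimate \eqref{annihilator esti} after using Proposition \ref{prop:conf_operators} in reverse to keep the right-hand side in the stated (non-$\alpha$) form. Uniqueness of $v_{\ks}$ in the class $\{v: v-u_{\ks}\in\mathcal{E}_{\ks}\}$ follows from uniqueness of the fixed point together with the uniqueness in Proposition \ref{prop:sols_linearization_eq}. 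Finally, $v_{\ks}\not\equiv0$: if it were, then $u_{\ks}=-w\in\mathcal{E}_{\ks}$ would have $\|u_{\ks}\|_{\mathcal{E}}\leq 2C'\alpha$, which is impossible for $\alpha$ small since $\|u_{\ks}\|_{\mathcal{E}}$ is a fixed positive constant independent of $\ks$.

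The main obstacle is the nonlinear estimate in the boundary term, where the exponent $p=\frac{n}{n-2}$ is only slightly larger than $1$, so $p-2<0$ and the naive ``$|a|^{p-2}b^2$'' bound is singular where $u_{\ks}$ is small; one must instead use the cleaner inequality $\bigl||a+b|^{p-1}(a+b)-|a|^{p-1}a\bigr|\leq C(|a|^{p-1}|b|+|b|^p)$ together with $\bigl|p|a|^{p-1}b\bigr|\le C|a|^{p-1}|b|$, so the difference is controlled by $C(|a|^{p-1}|b|+|b|^{p})$ — still giving, via Hölder with the trace embedding, a bound of the form $C\|w\|_{\mathcal{E}}\bigl(\text{(error)}+\|w\|_{\mathcal{E}}\bigr)^{p-1}$ that is enough to close the contraction once $\rho$ is small. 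Apart from this subtlety, the argument is a routine Lyapunov–Schmidt fixed-point construction, entirely parallel to \cite[Corollary 3 and its sequel]{Brendle3} and to \cite{almaraz1}, with the one genuinely new point being the simultaneous handling of the interior $L^{\conj}$ norm and the boundary $L^{\frac{2(n-1)}{n}}$ norm, which is exactly what Propositions \ref{prop:eigenvalues}–\ref{prop:sols_linearization_eq} have been set up to accommodate.
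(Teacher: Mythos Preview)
Your proposal is correct and follows essentially the same route as the paper: define the nonlinear operator $\Phi_{\ks}(w)=G_{\ks}(\text{error}+\text{nonlinear remainder})$, use the pointwise inequalities $\bigl||u+w|^{p-1}(u+w)-|u+\widetilde w|^{p-1}(u+\widetilde w)-p\,u^{p-1}(w-\widetilde w)\bigr|\leq C(|w|^{p-1}+|\widetilde w|^{p-1})|w-\widetilde w|$ (with $p=\frac{n+2}{n-2}$ in the interior and $p=\frac{n}{n-2}$ on the boundary) together with \eqref{linear esti} to obtain a contraction, and conclude by the Banach fixed-point theorem exactly as the paper does. Your discussion of the $p<2$ boundary exponent is more explicit than the paper's, but leads to the same estimate; note only that (as in the paper) the uniqueness you get is within the small ball where the contraction holds, not globally in $\mathcal{E}_{\ks}$.
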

 
 \begin{proof}
 Let $G_{\ks}:L^{\frac{2n}{n+2}}(\Rn)\times L^{\frac{2(n-1)}{n}}(\d\Rn)\to \mathcal{E}_{\ks}$ be the solution operator constructed in Proposition \ref{prop:sols_linearization_eq}, and we define a nonlinear operator $\Phi_{\ks}$ on $\mathcal{E}_{\ks}$ by
 \begin{align*}
 &\Phi_{\ks}(w)\\
 =&G_{\ks}\left(\Delta_g u_{\ks}-\frac{n-2}{4(n-1)}R_g u_{\ks}+n(n-2)u_{\ks}^{\frac{n+2}{n-2}},-\frac{n-2}{2}h_g u_{\ks} \right)+\\
 &G_{\ks}\left( n(n-2)|u_{\ks}+w|^{\frac{4}{n-2}}(u_{\ks}+w)-n(n-2)u_{\ks}^{\frac{n+2}{n-2}}-n(n+2)u_{\ks}^{\frac{4}{n-2}}w,\right.\\
 & \qquad~~\left.-(n-2)T_c|u_{\ks}+w|^{\frac{2}{n-2}}(u_{\ks}+w)+(n-2)T_cu_{\ks}^{\frac{n}{n-2}}+n T_c u_{\ks}^{\frac{2}{n-2}}w \right).
 \end{align*}
 In particular, it follows from Propositions \ref{prop:conf_operators} and \ref{prop:sols_linearization_eq}  that $\|\Phi_{\ks}(0)\|_{\mathcal{E}}\leq C\alpha $.
 
 Using the pointwise estimates
 \begin{align*}
 &\left||u_{\ks}+w|^{\frac{4}{n-2}}(u_{\ks}+w)-|u_{\ks}+\widetilde{w}|^{\frac{4}{n-2}}(u_{\ks}+\widetilde{w})-\frac{n+2}{n-2}u_{\ks}^{\frac{4}{n-2}}(w-\widetilde{w})\right|\\
 \leq& C(|w|^{\frac{4}{n-2}}+|\widetilde{w}|^{\frac{4}{n-2}})|w-\widetilde{w}|
 \end{align*}
 and
 \begin{align*}
  &\left||u_{\ks}+w|^{\frac{2}{n-2}}(u_{\ks}+w)-|u_{\ks}+\widetilde{w}|^{\frac{2}{n-2}}(u_{\ks}+\widetilde{w})-\frac{n}{n-2} u_{\ks}^{\frac{2}{n-2}}(w-\widetilde{w})\right|\\
 \leq& C(|w|^{\frac{2}{n-2}}+|\widetilde{w}|^{\frac{2}{n-2}})|w-\widetilde{w}|
 \end{align*}
 and Proposition \ref{prop:sols_linearization_eq}, we obtain
 \begin{align*}
 &\|\Phi_{\ks}(w)-\Phi_{\ks}(\widetilde{w})\|_{\mathcal{E}}\\
 \leq& C\left\||u_{\ks}+w|^{\frac{4}{n-2}}(u_{\ks}+w)-|u_{\ks}+\widetilde{w}|^{\frac{4}{n-2}}(u_{\ks}+\widetilde{w})\right.\\
 &\qquad\left.-\frac{n+2}{n-2}u_{\ks}^{\frac{4}{n-2}}(w-\widetilde{w})\right\|_{L^{\frac{2n}{n+2}}(\Rn)}\\
 &+C\left\||u_{\ks}+w|^{\frac{2}{n-2}}(u_{\ks}+w)-|u_{\ks}+\widetilde{w}|^{\frac{2}{n-2}}(u_{\ks}+\widetilde{w})\right.\\
 &\qquad\quad\left.-\frac{n}{n-2}u_{\ks}^{\frac{2}{n-2}}(w-\widetilde{w})\right\|_{L^{\frac{2(n-1)}{n}}(\d\Rn)}\\
 \leq& C\left(\|w\|_{L^{\crit}(\Rn)}^{\frac{4}{n-2}}+\|\widetilde{w}\|_{L^{\crit}(\Rn)}^{\frac{4}{n-2}}\right)\|w-\widetilde{w}\|_{\mathcal{E}}\\
 &+ C\left(\|w\|_{L^{\frac{2(n-1)}{n-2}}(\d\Rn)}^{\frac{2}{n-2}}+\|\widetilde{w}\|_{L^{\frac{2(n-1)}{n-2}}(\d\Rn)}^{\frac{2}{n-2}}\right)\|w-\widetilde{w}\|_{\mathcal{E}}
 \end{align*}
 for $w,\widetilde{w}\in\mathcal{E}_{\ks} $.
 Hence, if $\alpha$ is sufficiently small, then the contraction mapping principle implies that $\Phi_{\ks}$ has a unique fixed point $w_{\ks}$ within $\mathcal{E}_{\ks}$. Hence $v_{\ks}=u_{\ks}+w_{\ks}$ is the desired solution, and not identically zero, which follows from \eqref{annihilator esti} and Proposition \ref{prop:conf_operators}. 
 \end{proof}
 
 Given a pair $\ks \in \R^{n-1}\times(0,\infty)$, we define the following energy functional 
 \begin{align}\label{energy functional}
& \mathcal{F}_{g}\ks\no\\
:=&\int_{\Rn}\left(|\nabla v_{\ks}|_{g}^{2}+c_nR_gv_{\ks}^{2}-(n-2)^2|v_{\ks}|^{\crit}\right)+d_n\int_{\d\Rn} h_g v_{\ks}^{2}\no\\
&+\frac{(n-2)^2}{n-1}T_c\int_{\d\Rn}|v_{\ks}|^{\frac{2(n-1)}{n-2}}-2(n-2)\int_{\Rn}u_{\ks}^{\crit}+\frac{n-2}{n-1}T_c\int_{\d\Rn}u_{\ks}^{\frac{2(n-1)}{n-2}}.
 \end{align}

 \begin{proposition}\label{prop:v_critical_pt_F}
 The function $\mathcal{F}_{g}$ is continuously differentiable. Moreover, if $\ks$ is a critical point of $\mathcal{F}_{g}$, then the function $v_{\ks} $\,is a positive smooth solution of 
\begin{align}
\begin{cases}
\displaystyle -\D_{g}v_{\ks}+c_n R_g v_{\ks}=n(n-2)v_{\ks}^{\frac{n+2}{n-2}}, &\quad \mathrm{in~~} \Rn, \\
\displaystyle \frac{\d v_{\ks}}{\d x_n}-d_n h_g v_{\ks}=(n-2)T_c v_{\ks}^{\frac{n}{n-2}}, &\quad \mathrm{on~~} \d \Rn.
\end{cases}
\end{align}
 \end{proposition}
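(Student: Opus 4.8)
The plan is to carry out the concluding step of the Lyapunov--Schmidt scheme. Since the last two terms of $\mathcal{F}_g\ks$ in \eqref{energy functional} do not depend on $\ks$ — a translation-and-dilation change of variables makes $\int_{\Rn}u_{\ks}^{\crit}$ and $\int_{\d\Rn}u_{\ks}^{\critbordo}$ constant, just as for the $L^{\conj}(\Rn)$- and $L^{\conjbordo}(\d\Rn)$-norms of $u_{\ksa}$ — one has $\mathcal{F}_g\ks=\mathcal{E}_g[v_{\ks}]+\text{const}$, where $\mathcal{E}_g[v]:=\int_{\Rn}(|\nabla v|_g^2+c_nR_gv^2-(n-2)^2|v|^{\crit})+d_n\int_{\d\Rn}h_gv^2+\frac{(n-2)^2}{n-1}T_c\int_{\d\Rn}|v|^{\critbordo}$ is a $C^1$ functional on $\mathcal{E}$ with $\tfrac12 D\mathcal{E}_g[v](\varphi)$ equal to the bilinear expression on the left of \eqref{annihilator}. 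Thus the continuous differentiability of $\mathcal{F}_g$ reduces to that of the map $\ks\mapsto v_{\ks}=u_{\ks}+w_{\ks}\in\mathcal{E}$, which I would obtain from the implicit function theorem applied to the fixed-point equation $w=\Phi_{\ks}(w)$ of Proposition \ref{prop:sol_v}. The delicate point is that the projection space $\mathcal{E}_{\ks}$ itself varies with $\ks$; I would handle this by differentiating the explicit bubble $u_{\ks}$ and the constraint functionals in $\ks$ and passing to the augmented system that carries the $n$ Lagrange multipliers as extra unknowns over the fixed space $\mathcal{E}$, whose linearization at the solution is the invertible map of Proposition \ref{prop:sols_linearization_eq}; the contraction estimates of Proposition \ref{prop:sol_v} then provide the needed uniformity.

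Next is the Euler--Lagrange step. By \eqref{annihilator}, $\tfrac12 D\mathcal{E}_g[v_{\ks}]$ annihilates $\mathcal{E}_{\ks}$; being a bounded functional on $\mathcal{E}$ (Sobolev and trace), it is therefore a linear combination $\sum_{i=1}^n\beta_i(\ks)\ell_i$ of the $n$ functionals $\ell_i(\varphi):=2n\int_{\Rn}u_{\ksi}\varphi-T_c\int_{\d\Rn}\hat u_{\ksi}\varphi$ that cut out $\mathcal{E}_{\ks}$. The chain rule then gives, for each of the $n$ coordinate directions $\partial_{\xi_1},\dots,\partial_{\xi_{n-1}},\partial_\epsilon$, $\partial_{\ks_j}\mathcal{F}_g\ks=D\mathcal{E}_g[v_{\ks}](\partial_{\ks_j}v_{\ks})=2\sum_{i=1}^n\beta_i(\ks)\ell_i(\partial_{\ks_j}v_{\ks})$. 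Hence if $\ks$ is a critical point of $\mathcal{F}_g$, the vector $(\beta_i(\ks))$ is annihilated by the $n\times n$ matrix $M_{ij}:=\ell_i(\partial_{\ks_j}v_{\ks})$; once $M$ is shown invertible we deduce $\beta_i(\ks)=0$ for all $i$, i.e. $D\mathcal{E}_g[v_{\ks}]\equiv0$ on all of $\mathcal{E}$. Integrating by parts, this is exactly the weak form of the asserted boundary value problem: $-\D_gv_{\ks}+c_nR_gv_{\ks}=n(n-2)|v_{\ks}|^{\frac4{n-2}}v_{\ks}$ in $\Rn$ and $\partial v_{\ks}/\partial x_n-d_nh_gv_{\ks}=(n-2)T_c|v_{\ks}|^{\frac2{n-2}}v_{\ks}$ on $\d\Rn$.

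To invert $M$, write $\partial_{\ks_j}v_{\ks}=\partial_{\ks_j}u_{\ks}+\partial_{\ks_j}w_{\ks}$, so $M=M^0+E$ with $M^0_{ij}:=\ell_i(\partial_{\ks_j}u_{\ks})$ a matrix that does not involve $g$ and $|E_{ij}|\le C\|\partial_{\ks_j}w_{\ks}\|_{\mathcal{E}}\le C\alpha$ (by \eqref{linear esti}, \eqref{annihilator esti}, Proposition \ref{prop:conf_operators} and the $C^1$-bound above). The matrix $M^0$ is nonsingular: differentiating \eqref{bubble_half_space} in $\ks$ shows each $\partial_{\ks_j}u_{\ks}$ solves the flat linearized equations, whose solution space in $\mathcal{E}$ is exactly $n$-dimensional and equal to $\mathrm{span}\{u_{\ksi}:1\le i\le n\}$ (the fact underlying Proposition \ref{prop:eigenvalues}); the $n$ functions $\partial_{\ks_j}u_{\ks}$ are linearly independent because $\ks\mapsto u_{\ks}$ is an immersion, hence span this space, and the functionals $\ell_i$ are nondegenerate on it (under the stereographic change of variables they become the nondegenerate pairing of the coordinate functions $\zeta_1,\dots,\zeta_n$ on the spherical cap $\Sigma$), so $\mathcal{E}_{\ks}$ is a complement to it. Therefore no nonzero combination of the $\partial_{\ks_j}u_{\ks}$ lies in $\mathcal{E}_{\ks}$, i.e. $M^0$ has trivial kernel; scaling invariance of $\Rn$ reduces the uniform lower bound for $|\det M^0|$ to a compact range of $\ks$, and then $M=M^0+O(\alpha)$ stays invertible for $\alpha$ small, depending only on $n$ and $T_c$.

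It remains to show $v_{\ks}>0$ and $v_{\ks}\in C^\infty$. Testing the equation $D\mathcal{E}_g[v_{\ks}]\equiv0$ with $v_{\ks}^-:=\max\{-v_{\ks},0\}\in\mathcal{E}$, using $\langle\nabla v_{\ks},\nabla v_{\ks}^-\rangle_g=-|\nabla v_{\ks}^-|_g^2$ and $v_{\ks}v_{\ks}^-=-(v_{\ks}^-)^2$ a.e., and that $R_g,h_g$ are $O(\alpha)$ and supported in $B_1^+$, yields $(1-C\alpha)\|v_{\ks}^-\|_{\mathcal{E}}^2\le n(n-2)\|v_{\ks}^-\|_{L^{\crit}(\Rn)}^{\crit}+(n-2)|T_c|\,\|v_{\ks}^-\|_{L^{\critbordo}(\d\Rn)}^{\critbordo}$ (here $T_c<0$ pushes the boundary term to the right). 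On one hand, \eqref{ineq: Sobolev and trace} then forces $\|v_{\ks}^-\|_{\mathcal{E}}$ to be either $0$ or bounded below by a positive constant depending only on $n$ and $T_c$. On the other hand, $v_{\ks}^-\le|w_{\ks}|$ pointwise (where $v_{\ks}<0$ one has $|w_{\ks}|>u_{\ks}>0$), so by \eqref{ineq: Sobolev and trace}, \eqref{annihilator esti} and Proposition \ref{prop:conf_operators} the right-hand side above is $O(\alpha^{\critbordo})$, whence $\|v_{\ks}^-\|_{\mathcal{E}}=O(\alpha^{(n-1)/(n-2)})$. For $\alpha$ small these are incompatible unless $v_{\ks}^-\equiv0$, so $v_{\ks}\ge0$; and $v_{\ks}\not\equiv0$ by Proposition \ref{prop:sol_v}. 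The strong maximum principle applied to $-\D_gv_{\ks}+c_nR_gv_{\ks}\ge0$ gives $v_{\ks}>0$ in the interior, and a boundary zero is ruled out by the Hopf lemma (there the boundary condition would force $\partial v_{\ks}/\partial x_n=0$, contradicting Hopf), so $v_{\ks}>0$ on $\overline{\Rn}$. Finally, a Brezis--Kato/Moser iteration — using that $v_{\ks}$ differs from the smooth bubble $u_{\ks}$ only by the small function $w_{\ks}$, which removes the critical-exponent obstruction — places $v_{\ks}$ in $L^p_{\mathrm{loc}}$ for every $p$, hence in $W^{2,p}$ up to the boundary for the linear oblique-derivative problem; since $v_{\ks}>0$ makes the nonlinearities $t\mapsto t^{\frac{n+2}{n-2}}$ and $t\mapsto t^{\frac{n}{n-2}}$ smooth and $g$ is smooth, Schauder bootstrapping yields $v_{\ks}\in C^\infty(\overline{\Rn})$. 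The two places that require genuine care are the $C^1$-dependence across the $\ks$-varying space $\mathcal{E}_{\ks}$ (first paragraph) and this last regularity bootstrap; the reduction itself (middle two paragraphs) is essentially formal once the Section \ref{Sect3} machinery is in place.
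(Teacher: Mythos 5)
Your proof is correct and follows essentially the same route as the paper's: you introduce Lagrange multipliers for the constraint functionals $\ell_a$, express $\nabla\mathcal{F}_g$ at $\ks$ as the multiplier vector contracted against the matrix $M_{ij}=\ell_i(\partial_{\ks_j}v_{\ks})$, and show $M$ is a small perturbation of a nonsingular matrix $M^0$ so that criticality forces the multipliers to vanish; the positivity step (testing with $v_{\ks}^-$ and playing the Sobolev lower bound against the smallness of $\|v_{\ks}-u_{\ks}\|$) and the regularity bootstrap are exactly what the paper cites to Brendle's Proposition~6 and to Cherrier. The only organizational difference is that the paper establishes nonsingularity of $M^0$ by differentiating the constraint identity $\ell_a^{\ks}(v_{\ks}-u_{\ks})=0$ in $\ks$ and computing the matrix explicitly as diagonal with entries $\pm\bar c_a(2\epsilon)^{-1}$, whereas you argue it abstractly from the fact that $\{\partial_{\ks_j}u_{\ks}\}$ spans the kernel of the flat linearized operator and pairs nondegenerately with $\{\ell_i\}$.
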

 
\begin{proof}
By definition of $v_{\ks}$, we can find real numbers $b_a=b_a\ks, 1 \leq a \leq n$, such that
\begin{align*}
 &\int_{\Rn}\left(\langle \nabla v_{\ks},\nabla \varphi\rangle_g+c_nR_g v_{\ks}\varphi-n(n-2)|v_{\ks}|^{\frac{4}{n-2}}v_{\ks}\varphi\right)\no\\
 &+\int_{\d \Rn}\left(d_nh_gv_{\ks}\varphi+(n-2)T_c|v_{\ks}|^{\frac{2}{n-2}}v_{\ks}\varphi\right) \\
 =&\sum_{a=1}^{n}b_a\ks \left(2n\int_{\Rn}\varphi u_{\ksa}-T_c\int_{\d\Rn}\varphi\hat{u}_{\ksa}\right)
\end{align*}
for all test function\,$\varphi\in\mathcal{E}$. This implies
\begin{align*}
\d_\epsilon\mathcal{F}_g=2\sum_{a=1}^{n} b_a\ks \left(2n\int_{\Rn}\d_\epsilon v_{\ks} u_{\ksa}-T_c\int_{\d\Rn}\d_\epsilon v_{\ks}\hat{u}_{\ksa}\right)
\end{align*}
and
\begin{align*}
\d_{\xi_j}\mathcal{F}_g=2\displaystyle{\sum_{a=1}^{n}}b_a\ks \left(2n\int_{\Rn}\d_{\xi_j} v_\ks u_{\ksa}-T_c\int_{\d\Rn}\d_{\xi_j}v_\ks\hat{u}_{\ksa}\right)
\end{align*}
for $1 \leq j \leq n-1$. On the other hand, we have
\begin{align*}
 0=2n\int_{\Rn}(v_{\ks}-u_{\ks}) u_{\ksa}-T_c\int_{\d\Rn}(v_{\ks}-u_{\ks})\hat{u}_{\ksa},
\end{align*}
since $v_{\ks}-u_{\ks}\in\mathcal{E}_{\ks}$. Differentiating the above equation with respect to $\epsilon$ and $\xi_k$, we obtain
\begin{align*}
0=&2n\int_{\Rn}\d_\epsilon(v_{\ks}-u_{\ks}) u_{\ksa}-T_c\int_{\d\Rn}\d_\epsilon(v_{\ks}-u_{\ks})\hat{u}_{\ksa}\\
&+2n\int_{\Rn}(v_{\ks}-u_{\ks}) \d_\epsilon u_{\ksa}-T_c\int_{\d\Rn}(v_{\ks}-u_{\ks})\d_\epsilon\hat{u}_{\ksa}\\
=&2n\int_{\Rn}(v_{\ks}-u_{\ks}) \d_\epsilon u_{\ksa}-T_c\int_{\d\Rn}(v_{\ks}-u_{\ks})\d_\epsilon\hat{u}_{\ksa}\\
&+2n\int_{\Rn}u_{\ksa}\d_\epsilon v_{\ks} -T_c\int_{\d\Rn}\hat{u}_{\ksa}\d_\epsilon v_{\ks}+\bar c_n(2\epsilon)^{-1}\delta_{na},
\end{align*}
where  $1\leq a\leq n$, $\bar c_n$ is a nonzero constant independent of $\xi$ and $\epsilon$, and
\begin{align*}
0=&2n\int_{\Rn}(v_{\ks}-u_{\ks}) \d_{\xi_k}u_{\ksa}-T_c\int_{\d\Rn}(v_{\ks}-u_{\ks})\d_{\xi_k}\hat{u}_{\ksa}\\
&+2n\int_{\Rn}\d_{\xi_k}v_{\ks} u_{\ksa}-T_c\int_{\d\Rn}\d_{\xi_k}v_{\ks}\hat{u}_{\ksa}-\bar c_a(2\epsilon)^{-1}\delta_{ak},
\end{align*}
where each $\bar c_i, 1 \leq i \leq n-1$ is a nonzero constant independent of $\xi$ and $\epsilon$.

Therefore, putting these facts together, we conclude that
\begin{align*}
&-\bar c_n b_n\ks=\epsilon\d_\epsilon\mathcal{F}_g+\\
&2\epsilon \sum_{a=1}^{n}b_a\ks \left[2n\int_{\Rn}\d_\epsilon u_{\ksa} (v_{\ks}-u_{\ks})-T_c\int_{\d\Rn}\d_\epsilon \hat{u}_{\ksa}(v_{\ks}-u_{\ks})\right]
\end{align*}
and for $1 \leq i \leq n-1$,
\begin{align*}
&\bar c_i a_i\ks=\epsilon\d_{\xi_i}\mathcal{F}_g+\\
&2\epsilon\sum_{a=1}^{n}b_a\ks \left[2n\int_{\Rn}\d_{\xi_i} u_{\ksa} (v_{\ks}-u_{\ks})-T_c\int_{\d\Rn}\d_{\xi_i}\hat{u}_{\ksa}(v_{\ks}-u_{\ks})\right].
\end{align*}
Hence, if $\ks$ is a critical point of $\mathcal{F}_g$, then
\begin{align*}
&\sum_{a=1}^{n}|\bar c_a||b_a\ks|\\
\leq& C\left( \|v_{\ks}-u_{\ks}\|_{L^{\crit}(\Rn)}+\|v_{\ks}-u_{\ks}\|_{L^{\frac{2(n-1)}{n-2}}(\d\Rn)}\right){\sum_{a=1}^{n}}|b_a\ks|.
\end{align*}
By \eqref{ineq: Sobolev and trace} and \eqref{annihilator esti} we have
\begin{align*}
 &\|v_{\ks}-u_{\ks}\|_{L^{\crit}(\Rn)}+\|v_{\ks}-u_{\ks}\|_{L^{\frac{2(n-1)}{n-2}}(\d\Rn)}\\
 \leq& C  \|v_{\ks}-u_{\ks}\|_{\mathcal{E}}\leq C\alpha.
\end{align*}
Thus, if $\alpha$ is sufficiently small, we obtain 
\begin{align*}
\displaystyle{\sum_{a=1}^{n}}|b_a\ks|=0.
\end{align*}
Consequently, we have 
\begin{align*}
&\int_{\Rn}\left(\langle \nabla v_{\ks}, \nabla \varphi\rangle_g+c_nR_g v_{\ks}\varphi-n(n-2)|v_{\ks}|^{\frac{4}{n-2}}v_{\ks}\varphi\right)\no\\
 &+\int_{\d \Rn}\left(d_nh_gv_{\ks}\varphi+(n-2)T_c|v_{\ks}|^{\frac{2}{n-2}}v_{\ks}\varphi\right)=0
\end{align*}
for all $\varphi\in \mathcal{E}$.

Finally, we follow the same lines in \cite[Proposition 6]{Brendle3} that $v_{\ks}\geq 0$ in $\overline{\Rn}$. Together with $v_{\ks}\not \equiv 0$ by Proposition \ref{prop:sol_v}, the strong maximum principle and the Hopf boundary point lemma give $v_{\ks}>0$ in $\overline{\Rn}$. By the regularity theory of P. Cherrier \cite{cherrier}, we show that $v_{\ks}$ is smooth.
\end{proof}

\section{An estimate for the energy of a bubble}\label{Sect4}

We first introduce a multi-linear form $\overline W : \R^{n-1} \times \mathbb{R}^{n-1} \times \mathbb{R}^{n-1} \times \mathbb{R}^{n-1} \to \R$ satisfying the same algebraic properties of the Weyl tensor on $\pa \Rn$. Moreover, we assume
$$\sum\limits_{i,j,k,l=1}^{n-1}(\overline{W}_{ikjl}+\overline{W}_{iljk})^2>0.$$
If $x=(x',x_n)\in\Rn$, then we identify $x'$ with $(x',0)\in\pa \Rn$ and  define
\begin{equation}\label{def:H_ab}
H_{ij}(x)=H_{ij}(x')=\overline{W}_{ikjl}x^k x^l \quad\mathrm{~~and~~}\quad H_{na}(x)=0,
\end{equation}
as well as $\overline H_{ab}(x)=f(|x'|^2)H_{ab}(x)$,
where $f(s)$ is a polynomial of degree $d$ for $0\leq d<(n-6)/4$ and is to be determined later. Then $H$ is symmetric, trace-free, independent of the variable $x_n$, and satisfies
\begin{align*}
x^a H_{ab}(x)=x^i H_{ib}(x)=\d_{a}H_{ab}(x)=\d_{i}H_{ib}(x)=0.
\end{align*}
We define a Riemannian metric $g = \exp(h)$ in $\overline{\Rn}$, where $h$ is a trace-free symmetric two-tensor in $\overline{\Rn}$ and $h_{na}=\d_{n}h_{ab}(x)=0$ for all $x \in \overline{\Rn}$, and satisfies
\begin{align*}
\begin{cases}
h_{ab}(x)=\mu\lambda^{2d}f(\lambda^{-2}|x'|^2)H_{ab}(x),&\quad\mathrm{if~~} |x|\leq \rho,\\
h_{ab}(x)=0,&\quad\mathrm{if~~} |x|\geq 1.
\end{cases}
\end{align*}
Here $0<\mu \leq 1, 0<\lambda\leq\rho\leq 1$. This gives $h_{ab}(x)=O(\mu(\lambda+|x|)^{2d+2})$. In addition, we require that $|h|+|\d h|+|\d^{2}h|\leq \alpha$, where $\alpha$ is the constant given in Proposition \ref{prop:sol_v}. The boundary $\pa \Rn$ is totally geodesic with respect to $g$, since the second fundamental form vanishes on $\d\Rn$ , explicitly
\begin{align*}
\pi_{ij}=\Gamma_{ij}^{n}=\frac{1}{2}\left(\frac{\pa g_{in}}{\pa x_j}+\frac{\pa g_{jn}}{\pa x_i}-\frac{\pa g_{ij}}{\pa x_n}\right)=0.
\end{align*}
Applying Proposition \ref{prop:sol_v} to each pair $\ks\in \R^{n-1}\times(0,\infty)$, we choose $v_{\ks}$ to be the unique element of $\mathcal{E}$ such that $v_{\ks}-u_{\ks}\in\mathcal{E}_{\ks}$ and 
\begin{align*}
 &\int_{\Rn}\left(\langle \nabla v_{\ks},\nabla \varphi\rangle_g+c_nR_g v_{\ks}\varphi-n(n-2)|v_{\ks}|^{\frac{4}{n-2}}v_{\ks}\varphi\right)\no\\
 &+(n-2)T_c\int_{\d \Rn}|v_{\ks}|^{\frac{2}{n-2}}v_{\ks}\varphi =0
 \end{align*}
 for all $\varphi\in \mathcal{E}_{\ks}$.
 
Let $\Omega=\left\{\ks\in \R^{n-1}\times(0,\infty);|\xi|<1,\frac{1}{2}<\epsilon<1 \right\}$. Similar to \cite[Proposition 7 and Corollary 8]{Brendle3} and \cite[Proposition 5 and Corollary 6]{Brendle-Marques},  for any pair $\ks\in\lambda\Omega$ we obtain
 \begin{align}\label{asym esti1}
 \|\Delta_{g}u_{\ks}-c_n R_g u_{\ks}+n(n-2)u_{\ks}^{\frac{n+2}{n-2}}\|_{L^{\frac{2n}{n+2}}(\Rn)}\leq C\mu \lambda^{2d+2}+C\lambda^{\frac{n-2}{2}}\rho^{\frac{2-n}{2}},
 \end{align}
 \begin{align}\label{asym esti2}
 &\|\Delta_{g}u_{\ks}-c_n R_g u_{\ks}+n(n-2)u_{\ks}^{\frac{n+2}{n-2}}+\mu \lambda^{2d}f(\lambda^{-2}|x'|^2)H_{ij}(x)\d_i\d_j u_{\ks}\|_{L^{\frac{2n}{n+2}}(\Rn)}\no\\
 \leq& C\mu^{2} \lambda^{\frac{(2d+2)(n+2)}{n-2}}+C\lambda^{\frac{n-2}{2}}\rho^{\frac{2-n}{2}}
 \end{align}
 and together with Proposition \ref{prop:sol_v},
 \begin{align}\label{sol esti}
& \|v_{\ks}-u_{\ks}\|_{L^{\crit}(\Rn)}+\|v_{\ks}-u_{\ks}\|_{L^{\frac{2(n-1)}{n-2}}(\d\Rn)}\no\\
 \leq& C\mu \lambda^{2d+2}+C\lambda^{\frac{n-2}{2}}\rho^{\frac{2-n}{2}}.
 \end{align}
 
 By Proposition \ref{prop:sols_linearization_eq} with $h=0$, we define the function $w_{\ks}$ as the unique element of $\mathcal{E}_{\ks}$ satisfying
 \begin{align}\label{linear approximation}
 &\int_{\Rn}\left(\langle \nabla w_{\ks},\nabla \varphi\rangle-n(n+2)u_{\ks}^{\frac{4}{n-2}}w_{\ks}\varphi\right)+n T_c\int_{\d\Rn} u_{\ks}^{\frac{2}{n-2}}w_{\ks}\varphi\no\\
 =&-\int_{\Rn}\mu\lambda^{2d}f(\lambda^{-2}|x'|^2)H_{ij}(x)\d_i\d_ju_{\ks}\varphi
 \end{align}
 for all $\varphi\in \mathcal{E}_{\ks}$. In particular, $w_{(0,\epsilon)}=0$, since $x^iH_{ij}(x)=0$ for any $x\in \overline{\Rn}$.
 \begin{proposition}\label{prop:est_w}
 The function $w_{\ks}$ is smooth and satisfies that given any $\ks \in \lambda\Omega$,
 $|\d^{k}w_{\ks}(x)|\leq C\lambda^{\frac{n-2}{2}}\mu (\lambda+|x|)^{2d+4-k-n}$ for all $x\in \Rn, k=0,1,2$.
  \end{proposition}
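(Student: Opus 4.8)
The plan is to establish the pointwise decay estimate for $w_{\ks}$ by combining elliptic regularity theory with a careful analysis of the right-hand side of equation \eqref{linear approximation}. First I would observe that by Proposition \ref{prop:sols_linearization_eq} applied with $h=0$ (so that $g$ is the Euclidean metric), $w_{\ks}$ is the unique weak solution in $\mathcal{E}_{\ks}$ to the linear equation with forcing term $f_{\ks}(x) := -\mu\lambda^{2d}f(\lambda^{-2}|x'|^2)H_{ij}(x)\d_i\d_j u_{\ks}(x)$. From the explicit formula for $u_{\ks}$ one has $|\d_i\d_j u_{\ks}(x)| \leq C\lambda^{\frac{n-2}{2}}(\lambda+|x|)^{-n}$ on $\lambda\Omega$ (after rescaling, using $\epsilon\sim\lambda$), and since $H_{ij}(x) = \overline{W}_{ikjl}x^kx^l = O(|x|^2)$ while $f(\lambda^{-2}|x'|^2)H_{ij}(x) = O((\lambda+|x|)^{2d+2})$ by the bound $h_{ab} = O(\mu(\lambda+|x|)^{2d+2})$ recorded above, the forcing term satisfies the pointwise bound $|f_{\ks}(x)| \leq C\mu\lambda^{\frac{n-2}{2}}(\lambda+|x|)^{2d+2-n}$.

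Next I would derive the decay of $w_{\ks}$ itself. The natural approach, following \cite[Proposition 7]{Brendle3} and the analogous boundary estimates in \cite{Brendle-Marques}, is to construct an explicit supersolution. Since $2d+4-n < -2$ (which holds because $d < (n-6)/4$ forces $2d+4-n < (n-6)/2 + 4 - n = 1-n/2 < -2$ for $n\geq 3$... more precisely one checks $2d+4-n<-2 \iff 2d<n-6 \iff d<(n-6)/2$, implied by $d<(n-6)/4$), the function $(\lambda+|x|)^{2d+4-n}$ decays fast enough to be admissible. One verifies that $\Delta[(\lambda+|x|)^{2d+4-n}]$ behaves like $(\lambda+|x|)^{2d+2-n}$ up to a constant, so that $C\mu\lambda^{\frac{n-2}{2}}(\lambda+|x|)^{2d+4-n}$ serves as a barrier dominating $w_{\ks}$ away from the origin; near the origin the bound follows from the global $\mathcal{E}$-norm estimate \eqref{linear esti} applied to $f_{\ks}$, which gives $\|w_{\ks}\|_{\mathcal{E}} \leq C\mu\lambda^{2d+2}$ after computing $\|f_{\ks}\|_{L^{2n/(n+2)}}$, combined with local $W^{2,p}$ or $L^\infty$ bounds and the scaling $w_{\ks}(x)$ vs. $\lambda^{\frac{n-2}{2}}$. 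For the derivative bounds $k=1,2$, I would apply interior and boundary Schauder (or $L^p$) estimates on balls of radius comparable to $\lambda+|x|$ to the rescaled function, gaining the expected powers of $(\lambda+|x|)^{-1}$ per derivative; the boundary condition $\d_{x_n}w_{\ks} = -nT_c u_{\ks}^{2/(n-2)}w_{\ks}$ on $\d\Rn$ is of the right form (a Robin-type condition with bounded coefficient, using $u_{\ks}^{2/(n-2)} = O((\lambda+|x|)^{-2})$) to allow boundary elliptic estimates. Smoothness of $w_{\ks}$ follows from bootstrapping standard elliptic regularity, since the coefficients and forcing are smooth.

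The main obstacle I anticipate is getting the decay rate sharp near $x=0$: the naive barrier $(\lambda+|x|)^{2d+4-n}$ must be reconciled with the fact that the forcing $f_{\ks}$ vanishes to order $2d+2$ at the origin (because $H_{ij}(x)=O(|x|^2)$ and $f$ contributes $|x|^{2d}$ in the region $|x'|\gtrsim\lambda$), so one needs the solution to inherit this vanishing — i.e. $w_{\ks}(x) = O(\lambda^{\frac{n-2}{2}}\mu\,\lambda^{2d+4-n})$ rather than blowing up — near the origin. This requires either using the orthogonality constraints defining $\mathcal{E}_{\ks}$ to kill the slowly-decaying homogeneous solutions (the kernel elements $u_{\ksa}$), or more directly exploiting that $w_{(0,\epsilon)}=0$ together with a differentiation-in-$\xi$ argument, as Brendle does. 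Once the behavior at the origin is controlled this way and matched to the barrier estimate at infinity, the stated uniform bound on $\lambda\Omega$ follows by patching the two regimes.
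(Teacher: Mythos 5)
Your overall architecture is on the right track and reproduces several steps of the paper's proof: bounding the forcing term $-\mu\lambda^{2d}f(\lambda^{-2}|x'|^2)H_{ij}\partial_i\partial_j u_{\ks}$ in $L^{2n/(n+2)}$ by $C\mu\lambda^{2d+2}$, deducing $\|w_{\ks}\|_{\mathcal{E}}\le C\mu\lambda^{2d+2}$ from \eqref{linear esti}, obtaining a first pointwise bound near the origin via local elliptic $L^\infty$ estimates on balls of radius comparable to $\lambda+|x_0|$, and then upgrading to the derivatives by Schauder on the same scaled balls. But the mechanism you propose for sharpening the decay from the coarse rate $(\lambda+|x|)^{-(n-2)/2}$ down to the claimed $(\lambda+|x|)^{2d+4-n}$ — a supersolution/barrier comparison — is not what the paper does, and as sketched it has a genuine gap: the equation satisfied by $w_{\ks}$ is $\Delta w_{\ks} + n(n+2)u_{\ks}^{4/(n-2)}w_{\ks}=\cdots$ with a Robin boundary condition $\partial_n w_{\ks}+nT_c u_{\ks}^{2/(n-2)}w_{\ks}=\cdots$, and the positive potential $n(n+2)u_{\ks}^{4/(n-2)}\sim \lambda^2(\lambda+|x|)^{-4}$ is of the same order as $(-\Delta\phi)/\phi$ for $|x|\sim\lambda$, so the barrier $\phi=(\lambda+|x|)^{2d+4-n}$ is not a supersolution of the full operator without further work; a careless comparison can fail precisely in the region that matters.

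What the paper does instead, and what you should adopt, is a Green's-representation bootstrap. First, write the pointwise PDE for $w_{\ks}$ including the Lagrange-multiplier terms $\sum_a\bar b_a\ks u_{\ksa}$ that arise from the constraint $w_{\ks}\in\mathcal{E}_{\ks}$ (you gesture at these via ``orthogonality'' but never put them in the equation; the paper explicitly bounds $\sum_a|\bar b_a\ks|\le C\mu\lambda^{2d+2}$ by testing against $u_{\ksa}$). Then represent $w_{\ks}$ via the half-space Green's function with Neumann reflection, i.e. the kernel $|x-y|^{2-n}+|x^\ast-y|^{2-n}$ with $x^\ast=(x',-x_n)$, integrating $\Delta w_{\ks}$ over $\Rn$ and $\partial_n w_{\ks}$ over $\partial\Rn$. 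Feeding in the pointwise bounds $|\Delta w_{\ks}|\le n(n+2)u_{\ks}^{4/(n-2)}|w_{\ks}|+C\mu\lambda^{(n-2)/2}(\lambda+|x|)^{2d+2-n}$ and the analogous boundary bound, one obtains the weighted-sup inequality
\begin{align*}
\sup_{x\in\Rn}(\lambda+|x|)^{\beta}|w_{\ks}(x)|
&\le C\lambda^{2}\sup_{x\in\Rn}(\lambda+|x|)^{\beta-2}|w_{\ks}(x)| \\
&\quad+C\lambda\sup_{x\in\partial\Rn}(\lambda+|x|)^{\beta-1}|w_{\ks}(x)|+C\mu\lambda^{\beta+2d+3-n/2}
\end{align*}
valid for $0<\beta\le n-4-2d$; iterating from the starting exponent $\beta=(n-2)/2$ up to $\beta=n-4-2d$ yields the claimed bound. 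The constraint $d<(n-6)/4$ is exactly what makes this iteration terminate below $n-2$, where the Green's-function convolution remains well defined. This two-step bootstrap sidesteps the sign-sensitive maximum-principle comparison entirely, and is the route you should fill in rather than the barrier.
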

 \begin{proof}
 By definition of $\mathcal{E}_{\ks}$, there exist real numbers $\bar b_a\ks, 1\leq a\leq n$ such that
 \begin{align}\label{linearize expansion}
 &\int_{\Rn}\left(\langle\nabla w_{\ks},\nabla \phi \rangle -n(n+2)u_{\ks}^{\frac{4}{n-2}}w_{\ks} \phi\right)+nT_c\int_{\d\Rn}u_{\ks}^{\frac{2}{n-2}}w_{\ks}\phi\no \\
 =&-\int_{\Rn}\mu \lambda^{2d}f(\lambda^{-2}|x'|^2)H_{ij}(x)\d_i\d_j u_{\ks}(x)\phi\no\\
 &+\sum_{a=1}^{n}\bar b_a\ks\left(2n\int_{\Rn}u_{\ksa}\phi-T_c\int_{\d\Rn}\hat{u}_{\ksa}\phi \right )
 \end{align}
 for all $\phi \in \mathcal{E}$. Hence it follows from standard elliptic estimates that $w_{\ks}$ is smooth.
 Since 
 \begin{align*}
 \| \mu \lambda^{2d}f(\lambda^{-2}|x'|^2)H_{ij}(x)\d_i\d_j u_{\ks}(x)\|_{L^{\frac{2n}{n+2}}(\Rn)}\leq C\mu\lambda^{2d+2},
 \end{align*}
 then by \eqref{ineq: Sobolev and trace} and \eqref{linear esti} we have 
 \begin{align*}
 \|w_{\ks}\|_{L^{\frac{2n}{n-2}}(\Rn)}+\|w_{\ks}\|_{L^{\frac{2(n-1)}{n-2}}(\d\Rn)}\leq C\|w_{\ks}\|_{\mathcal{E}}\leq C\mu \lambda^{2d+2}.
 \end{align*}
 Choosing $\phi=u_{\ksa}$ in $\eqref{linearize expansion}$, we obtain 
 \begin{align*}
 \displaystyle{\sum_{a=1}^{n}}|\bar b_a \ks|\leq C\mu\lambda^{2d+2}.
 \end{align*}
 Hence, we have
 \begin{align*}
&\left |\Delta w_{\ks}+n(n+2)u_{\ks}^{\frac{4}{n-2}}w_{\ks}\right |\\
=&\left|\mu \lambda^{2d}f(\lambda^{-2}|x'|^2)H_{ij}(x)\d_i\d_j u_{\ks}(x)-2n\bar b_a\ks u_{\ksa}\right|\\
 \leq&C \mu\lambda^{\frac{n-2}{2}}(\lambda+|x|)^{2d+2-n}
 \end{align*}
 for all $x\in \Rn$, and 
 \begin{align*}
 \left|\frac{\d}{\d x_n}w_{\ks}+nT_c u_{\ks}^{\frac{2}{n-2}}w_{\ks}\right|=\left|-T_c\sum_{a=1}^{n}\bar b_a \ks \hat u_{\ksa}\right|\leq C\mu\lambda^{\frac{n}{2}}(\lambda+|x|)^{2d+2-n}
 \end{align*}
 for all $x\in \d\Rn$.
 We let $r=(\lambda+|x_0|)/2$ for any fixed $x_0\in \overline{\Rn}$. Then $\lambda +|x| \geq r$ for any $x \in B_r^+(x_0)$. Based on the above facts, we obtain 
 \begin{align*}
 u_{\ks}^{\frac{2}{n-2}}(x)\leq& C r^{-1},  &\quad \forall~ x\in B_{r}^{+}(x_0),\\
 \left|\frac{\d}{\d x_n}w_{\ks}-nT_cu_{\ks}^{\frac{2}{n-2}}w_{\ks}\right|\leq& C\mu\lambda^{\frac{n}{2}}r^{2d+2-n}, &\quad \forall~ x\in B_{r}^{+}(x_0)\cap \d\Rn,\\
 \left |\Delta w_{\ks}+n(n+2)u_{\ks}^{\frac{4}{n-2}}w_{\ks}\right|\leq& C\mu\lambda^{\frac{n-2}{2}}r^{2d+2-n}, &\quad \forall~ x\in B_{r}^{+}(x_0).
 \end{align*}
 By \cite[Theorems 8.25 and 8.26] we have 
 \begin{align*}
 &r^{\frac{n-2}{2}}|w_{\ks}(x_0)|\\
 \leq& C\|w_{\ks} \|_{L^{\frac{2n}{n-2}}(B_{r}^{+}(x_0))}+Cr^{\frac{n+2}{2}}\left\|\Delta w_{\ks}+n(n+2)u_{\ks}^{\frac{4}{n-2}}w_{\ks}\right\|_{L^{\infty}(B_{r}^{+}(x_0))}\\
 &+Cr^{\frac{n}{2}}\left\|\frac{\d}{\d x_n}w_{\ks}+nT_cu_{\ks}^{\frac{2}{n-2}}w_{\ks}\right\|_{L^{\infty}(B_{r}^{+}(x_0)\cap\d\Rn)}\\
 \leq& C\mu\lambda^{2d+2}+C\mu\lambda^{\frac{n-2}{2}}r^{2d+2+\frac{2-n}{2}}+C\mu\lambda^{\frac{n}{2}}r^{2d+2-\frac{n}{2}}\\
 \leq& C\mu\lambda^{2d+2},
 \end{align*}
 since $d<(n-6)/4$. Then we obtain
 \begin{align*}
 \underset{x\in \Rn}{\sup}(\lambda+|x|)^{\frac{n-2}{2}}|w_{\ks}(x)|\leq C\mu\lambda^{2d+2}.
 \end{align*}
 By Green's representation formula, we have 
 \begin{align*}
 &w_{\ks}(x)\\
 =&-\frac{1}{(n-2)|\Sp^{n-1}|}\int_{\Rn}(|x-y|^{2-n}+|x^\ast-y|^{2-n})\Delta w_{\ks}(y)\ud y\\
 &-\frac{1}{(n-2)|\Sp^{n-1}|}\int_{\d\Rn}(|x-y|^{2-n}+|x^\ast-y|^{2-n})\frac{\d}{\d y_n}w_{\ks}(y)\ud y ,
 \end{align*}
 for any $x \in \Rn$, where $x^\ast=(x_1,x_2,\cdots,x_{n-1},-x_n)$. From this we obtain 
 \begin{align*}
 &\underset{x\in\Rn}{\sup}(\lambda+|x|)^{\beta}|w_{\ks}(x)|\\
 &\leq C\underset{x\in\Rn}{\sup}(\lambda+|x|)^{\beta+2}|\Delta w_{\ks}(x)|+C\underset{x\in\d\Rn}{\sup}(\lambda+|x|)^{\beta+1}\left|\frac{\d}{\d x_n}w_{\ks}(x)\right|
 \end{align*}
 for all $0<\beta<n-2$. Since 
 \begin{align*}
 |\Delta w_{\ks}(x)|\leq n(n+2) u_{\ks}^{\frac{4}{n-2}}(x)|w_{\ks}(x)|+C\mu\lambda^{\frac{n-2}{2}}(\lambda+|x|)^{2d+2-n}  ,& \quad \forall ~x\in \Rn,
 \end{align*}
 and
 \begin{align*}
 \left|\frac{\d}{\d x_n}w_{\ks}(x)\right|\leq n |T_c| u_{\ks}^{\frac{2}{n-2}}(x)|w_{\ks}(x)|+C\mu\lambda^{\frac{n}{2}}(\lambda+|x|)^{2d+2-n} , &\quad \forall ~x\in \d\Rn,
 \end{align*}
 we conclude that 
 \begin{align*}
 &\underset{x\in\Rn}{\sup}(\lambda+|x|)^{\beta}|w_{\ks}(x)|\\
 \leq& C\lambda^{2}\underset{x\in\Rn}{\sup}(\lambda+|x|)^{\beta-2}|w_{\ks}(x)|+C\lambda \underset{x\in\d\Rn}{\sup}(\lambda+|x|)^{\beta-1}|w_{\ks}(x)|+C\mu\lambda^{\beta+2d+3-\frac{n}{2}}
 \end{align*}
 for all $0<\beta\leq n-4-2d$. Iterating this inequality, we obtain
 \begin{align*}
 \underset{x\in\Rn}{\sup}(\lambda+|x|)^{n-2d-4}|w_{\ks}(x)|\leq C\mu\lambda^{\frac{n-2}{2}} .
 \end{align*}
 Differentiating the equation \eqref{linear approximation} twice and repeating the argument above, we obtain the estimates of the  first and second derivatives of $w_{\ks}$.  
 \end{proof}

\begin{proposition}\label{prop:est_v-u-w}
There holds
  \begin{align*}
 &\|v_{\ks}-u_{\ks}-w_{\ks}\|_{L^{\crit}(\Rn)}+\|v_{\ks}-u_{\ks}-w_{\ks}\|_{L^{\frac{2(n-1)}{n-2}}(\d\Rn)}\no\\
 \leq& C\mu^{\frac{n}{n-2}}\lambda^{\frac{(2d+2)n}{n-2}}+C\lambda^{\frac{n-2}{2}}\rho^{\frac{2-n}{2}}
 \end{align*}
 for all $\ks\in \lambda\Omega$.
 \end{proposition}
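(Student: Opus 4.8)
The plan is to bound $\psi_{\ks}:=v_{\ks}-u_{\ks}-w_{\ks}$ in the $\|\cdot\|_{\mathcal{E}}$-norm and then apply the Sobolev--trace inequality \eqref{ineq: Sobolev and trace}; note that $\psi_{\ks}\in\mathcal{E}_{\ks}$, since both $v_{\ks}-u_{\ks}$ and $w_{\ks}$ lie in $\mathcal{E}_{\ks}$. To estimate $\|\psi_{\ks}\|_{\mathcal{E}}$ I would exhibit $\psi_{\ks}$ as $G_{\ks}(f_\ast,\hat f_\ast)$ for the linearized solution operator $G_{\ks}$ of Proposition \ref{prop:sols_linearization_eq} and then use \eqref{linear esti}. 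Concretely: substitute $v_{\ks}=u_{\ks}+w_{\ks}+\psi_{\ks}$ into the equation defining $v_{\ks}$ in Section \ref{Sect4} (here $h_g\equiv 0$ since $\d\Rn$ is totally geodesic, and $\mathrm{d}v_g=\mathrm{d}x$, $\mathrm{d}\sigma_g=\mathrm{d}x'$ because $h$ is trace-free); use the bubble equation \eqref{bubble_half_space} to rewrite the $u_{\ks}$-terms after integration by parts; use the defining equation \eqref{linear approximation} for $w_{\ks}$ (valid against every $\varphi\in\mathcal{E}_{\ks}$) to replace the \emph{Euclidean} Dirichlet term of $w_{\ks}$; and Taylor-expand $|v_{\ks}|^{4/(n-2)}v_{\ks}$ and $|v_{\ks}|^{2/(n-2)}v_{\ks}$ about $u_{\ks}$ to first order in $w_{\ks}+\psi_{\ks}$. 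By construction the terms linear in $w_{\ks}$ and the forcing $\mu\lambda^{2d}f(\lambda^{-2}|x'|^2)H_{ij}\d_i\d_j u_{\ks}$ cancel, and one is left with $\psi_{\ks}=G_{\ks}(f_\ast,\hat f_\ast)$, where $f_\ast$ is a sum of three pieces --- (a) the geometric error $\D_g u_{\ks}-c_n R_g u_{\ks}+n(n-2)u_{\ks}^{(n+2)/(n-2)}+\mu\lambda^{2d}f(\lambda^{-2}|x'|^2)H_{ij}\d_i\d_j u_{\ks}$, (b) the metric-correction term $\d_b\big((g^{ab}-\delta^{ab})\d_a w_{\ks}\big)-c_n R_g w_{\ks}$ acting on $w_{\ks}$, (c) the interior quadratic remainder $n(n-2)\big(|v_{\ks}|^{4/(n-2)}v_{\ks}-u_{\ks}^{(n+2)/(n-2)}-\tfrac{n+2}{n-2}u_{\ks}^{4/(n-2)}(w_{\ks}+\psi_{\ks})\big)$ --- and $\hat f_\ast=-(n-2)T_c\big(|v_{\ks}|^{2/(n-2)}v_{\ks}-u_{\ks}^{n/(n-2)}-\tfrac{n}{n-2}u_{\ks}^{2/(n-2)}(w_{\ks}+\psi_{\ks})\big)$. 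Then \eqref{linear esti} yields $\|\psi_{\ks}\|_{\mathcal{E}}\le C\|f_\ast\|_{L^{2n/(n+2)}(\Rn)}+C\|\hat f_\ast\|_{L^{2(n-1)/n}(\d\Rn)}$.

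Next I would estimate these three pieces. Piece (a) is bounded by $C\mu^2\lambda^{(2d+2)(n+2)/(n-2)}+C\lambda^{(n-2)/2}\rho^{(2-n)/2}$ directly from \eqref{asym esti2} (this is precisely why the correction $w_{\ks}$ was introduced: it makes available the sharper error estimate \eqref{asym esti2} in place of \eqref{asym esti1}). For piece (b), using $g^{ab}-\delta^{ab}=O(|h|)$, $\d(g^{ab}-\delta^{ab})=O(|\d h|)$ and $R_g=O(|\d^2 h|+|\d h|^2)$ --- all supported in $\{|x|\le 1\}$ with $|h|+|\d h|+|\d^2 h|=O(\mu(\lambda+|x|)^{2d+2})$ --- together with the weighted pointwise bounds $|\d^k w_{\ks}(x)|\le C\mu\lambda^{(n-2)/2}(\lambda+|x|)^{2d+4-k-n}$ of Proposition \ref{prop:est_w}, one obtains a pointwise bound $O\big(\mu^2\lambda^{(n-2)/2}(\lambda+|x|)^{4d+4-n}\big)$ on $\{|x|\le 1\}$; integrating, with the $\lambda$ inside $(\lambda+|x|)$ regularizing the singularity at the origin (which is where $d<(n-6)/4$ enters), gives an $L^{2n/(n+2)}$-norm of size $O(\mu^2\lambda^{2(2d+2)})$. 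For pieces (c), since $\tfrac{4}{n-2},\tfrac{2}{n-2}\in(0,1)$ for $n$ large, the elementary pointwise inequality $\big|\,|s+t|^{p}(s+t)-s^{p+1}-(p+1)s^{p}t\,\big|\le C|t|^{p+1}$ (valid for $s>0$, $0<p\le 1$) reduces these remainders to pointwise bounds $C|w_{\ks}+\psi_{\ks}|^{(n+2)/(n-2)}$ and $C|w_{\ks}+\psi_{\ks}|^{n/(n-2)}$; taking the relevant $L^q$-norms and using the exponent identities $\tfrac{n+2}{n-2}\cdot\tfrac{2n}{n+2}=\tfrac{2n}{n-2}$ and $\tfrac{n}{n-2}\cdot\tfrac{2(n-1)}{n}=\tfrac{2(n-1)}{n-2}$, together with $\|w_{\ks}\|_{L^{2n/(n-2)}(\Rn)}+\|w_{\ks}\|_{L^{2(n-1)/(n-2)}(\d\Rn)}\le C\|w_{\ks}\|_{\mathcal{E}}\le C\mu\lambda^{2d+2}$ (from the proof of Proposition \ref{prop:est_w}) and the bound \eqref{ineq: Sobolev and trace} for $\psi_{\ks}$, one gets $C(\mu\lambda^{2d+2})^{(n+2)/(n-2)}+C\|\psi_{\ks}\|_{\mathcal{E}}^{(n+2)/(n-2)}$ in the interior and $C(\mu\lambda^{2d+2})^{n/(n-2)}+C\|\psi_{\ks}\|_{\mathcal{E}}^{n/(n-2)}$ on the boundary. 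The boundary contribution $C(\mu\lambda^{2d+2})^{n/(n-2)}=C\mu^{n/(n-2)}\lambda^{(2d+2)n/(n-2)}$ is exactly the target rate, and every other non-$\psi_{\ks}$ term --- $\mu^2\lambda^{(2d+2)(n+2)/(n-2)}$, $\mu^2\lambda^{2(2d+2)}$, $\mu^{(n+2)/(n-2)}\lambda^{(2d+2)(n+2)/(n-2)}$ --- is dominated by it since $\mu,\lambda\le 1$ and $n\ge 4$.

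Putting everything together, \eqref{linear esti} gives
\[\|\psi_{\ks}\|_{\mathcal{E}}\le C\mu^{n/(n-2)}\lambda^{(2d+2)n/(n-2)}+C\lambda^{(n-2)/2}\rho^{(2-n)/2}+C\|\psi_{\ks}\|_{\mathcal{E}}^{n/(n-2)}+C\|\psi_{\ks}\|_{\mathcal{E}}^{(n+2)/(n-2)}.\]
Since $\|\psi_{\ks}\|_{\mathcal{E}}\le\|v_{\ks}-u_{\ks}\|_{\mathcal{E}}+\|w_{\ks}\|_{\mathcal{E}}=O(\alpha)$ is small (by \eqref{annihilator esti}, Proposition \ref{prop:conf_operators}, and the proof of Proposition \ref{prop:est_w}) and both exponents $\tfrac{n}{n-2},\tfrac{n+2}{n-2}$ exceed $1$, once $\alpha$ is small enough the last two terms are absorbed into the left-hand side, so $\|\psi_{\ks}\|_{\mathcal{E}}\le C\mu^{n/(n-2)}\lambda^{(2d+2)n/(n-2)}+C\lambda^{(n-2)/2}\rho^{(2-n)/2}$; \eqref{ineq: Sobolev and trace} then gives the asserted bound. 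The step I expect to be the main obstacle is the bookkeeping in deriving the $\psi_{\ks}$-equation together with the careful treatment of piece (b): one must account for every metric- and cutoff-induced error on $w_{\ks}$ using the full weighted estimates of Proposition \ref{prop:est_w}, and then check that all the auxiliary powers of $\mu$, $\lambda$ and $\rho$ that appear along the way are indeed $\le\mu^{n/(n-2)}\lambda^{(2d+2)n/(n-2)}+\lambda^{(n-2)/2}\rho^{(2-n)/2}$.
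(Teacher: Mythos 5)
Your proof is correct and follows essentially the same route as the paper: it writes $\psi_{\ks}=v_{\ks}-u_{\ks}-w_{\ks}=G_{\ks}(f_\ast,\hat f_\ast)$, splits the forcing into the same three pieces --- the improved bubble residual estimated by \eqref{asym esti2}, the metric-correction term on $w_{\ks}$ estimated via Proposition~\ref{prop:est_w}, and the quadratic remainders --- and then invokes \eqref{linear esti} and \eqref{ineq: Sobolev and trace}. The one place where you deviate is in handling the quadratic remainders (your piece (c), the paper's $B_4$ and $A_1$): you split $v_{\ks}-u_{\ks}=w_{\ks}+\psi_{\ks}$ and absorb the superlinear $\|\psi_{\ks}\|_{\mathcal{E}}$ terms by a smallness/contraction argument, whereas the paper simply bounds the remainders by $C\|v_{\ks}-u_{\ks}\|_{L^{2n/(n-2)}(\Rn)}^{(n+2)/(n-2)}$ and $C\|v_{\ks}-u_{\ks}\|_{L^{2(n-1)/(n-2)}(\d\Rn)}^{n/(n-2)}$ and plugs in the already-established estimate \eqref{sol esti}. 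Both work; the paper's route is more economical since \eqref{sol esti} is already available and makes the absorption step unnecessary, but nothing in your version is wrong.
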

 
 \begin{proof}
 Consider the functions
 \begin{align*}
 B_1=\displaystyle{\sum_{i,k=1}^{n}}\d_i[(g^{ik}-\delta_{ik})\d_k w_{\ks}]-c_n R_g w_{\ks}
 \end{align*}
and 
\begin{align*}
B_2=\displaystyle{\sum_{i,k=1}^{n}}\mu\lambda^{2d} f(\lambda^{-2}|x|^2)H_{ik}(x)\d_i\d_k u_{\ks}.
\end{align*}
By definition of $w_{\ks}$, we have 
\begin{align*}
&\int_{\Rn}\left(\langle \nabla w_{\ks},\nabla\varphi\rangle_{g}+c_n R_g w_{\ks}\phi-n(n+2)u_{\ks}^{\frac{4}{n-2}}w_{\ks}\varphi\right)+n T_c \int_{\d\Rn}u_{\ks}^{\frac{2}{n-2}}w_{\ks}\varphi\no\\
=&-\int_{\Rn}(B_1+B_2)\varphi
\end{align*}
for all $\varphi\in \mathcal{E}_{\ks}$. Since $w_{\ks}\in\mathcal{E}_{\ks}$, we obtain 
\begin{align*}
w_{\ks}=-G_{\ks}(B_1+B_2,0).
\end{align*}
By definitions of $v_{\ks}$ and $u_{\ks}$ we have 
\begin{align*}
v_{\ks}-u_{\ks}=G_{\ks}(B_3+B_4,A_1),
\end{align*}
where
\begin{align*}
B_3=&\Delta_g u_{\ks}-c_n R_g u_{\ks}+n(n-2)u_{\ks}^{\frac{n+2}{n-2}}\\
B_4=&n(n-2)|v_{\ks}|^{\frac{4}{n-2}}v_{\ks}-n(n-2)u_{\ks}^{\frac{n+2}{n-2}}-n(n+2)u_{\ks}^{\frac{4}{n-2}}(v_{\ks}-u_{\ks}),
\end{align*}
and
$$
A_1=-(n-2)T_c|v_{\ks}|^{\frac{2}{n-2}}v_{\ks}+(n-2)T_cu_{\ks}^{\frac{n}{n-2}}+n T_c u_{\ks}^{\frac{2}{n-2}}(v_{\ks}-u_{\ks}).
$$
Thus, we obtain 
\begin{align*}
v_{\ks}-u_{\ks}-w_{\ks}=G_{\ks}(B_1+B_2+B_3+B_4,A_1).
\end{align*}
It follows from \eqref{annihilator esti} that
\begin{align*}
&\|v_{\ks}-u_{\ks}-w_{\ks}\|_{\mathcal{E}}\\
\leq& C\|B_1+B_2+B_3+B_4\|_{L^{\frac{2n}{n+2}}(\Rn)}+C\|A_1\|_{L^{\frac{2(n-1)}{n}}(\d\Rn)}.
\end{align*}
Following the same lines in \cite[Corollary 8]{Brendle-Marques} and \cite[Proposition 7]{Brendle3}, together with Proposition \ref{prop:est_w} and \eqref{asym esti2} we obtain
\begin{align*}
\|B_1\|_{L^{\frac{2n}{n+2}}(\Rn)}\leq& C\lambda^{\frac{(2d+2)(n+2)}{n-2}}\mu^2 +C\rho^{2d+2}\mu\lambda^{\frac{n-2}{2}}\rho^{\frac{2-n}{2}},\\
\|B_2+B_3\|_{L^{\frac{2n}{n+2}}(\Rn)}\leq& C\lambda^{\frac{(2d+2)(n+2)}{n-2}}\mu^2+C\lambda^{\frac{n-2}{2}}\rho^{\frac{2-n}{2}}
\end{align*}
and by \eqref{asym esti1}, \eqref{sol esti},
\begin{align*}
\|B_4\|_{L^{\frac{2n}{n+2}}(\Rn)}\leq&\|v_{\ks}-u_{\ks}\|_{L^\frac{2n}{n-2}(\Rn)}^{\frac{n+2}{n-2}}\\
\leq& C\lambda^{\frac{(2d+2)(n+2)}{n-2}}\mu^{\frac{n+2}{n-2}}+C\lambda^{\frac{n+2}{2}}\rho^{\frac{-2-n}{2}},\\
\|A_1\|_{L^{\frac{2(n-1)}{n}}(\d\Rn)}\leq& C\|v_{\ks}-u_{\ks}\|_{L^{\frac{2(n-1)}{n-2}}(\d\Rn)}^{\frac{n}{n-2}}\\
\leq& C\lambda^{\frac{(2d+2)n}{n-2}}\mu^{\frac{n}{n-2}}+C\lambda^{\frac{n}{2}}\rho^{\frac{-n}{2}}.
\end{align*}
Therefore, putting these facts together, we obtain the desired estimate.
 \end{proof}
 
 \begin{proposition}\label{prop:A1}
 There holds
 \begin{align*}
  &\left|\int_{\Rn}\left(|\nabla v_{\ks}|_{g}^{2}- |\nabla u_{\ks}|_{g}^{2}+c_n R_g (v_{\ks}^{2}-u_{\ks}^{2})\right)\right.\\
  &+n(n-2)\int_{\Rn}\left[(|v_{\ks}|^{\frac{4}{n-2}}-u_{\ks}^{\frac{4}{n-2}})u_{\ks}v_{\ks}-(|v_{\ks}|^{\frac{2n}{n-2}}-u_{\ks}^{\frac{2n}{n-2}})\right]\\
  &\left. -\int_{\Rn}\displaystyle{\sum_{a,b=1}^{n}}\mu \lambda^{2d}f(\lambda^{-2}|x|^2)H_{ab}(x)\d_a \d_b u_{\ks} w_{\ks}\right.\\
 & -(n-2)T_c\int_{\d\Rn}(|v_{\ks}|^{\frac{2}{n-2}}-u_{\ks}^{\frac{2}{n-2}})u_{\ks}v_{\ks}\\
 &\left.+(n-2)T_c\int_{\d\Rn}(|v_{\ks}|^{\frac{2(n-1)}{n-2}}-u_{\ks}^{\frac{2(n-1)}{n-2}}) \right|\\
 \leq& C \lambda^{\frac{(4d+4)(n-1)}{n-2}}\mu^{\frac{2(n-1)}{n-2}}+C\mu\lambda^{2d+2+\frac{n-2}{2}}\rho^{\frac{2-n}{2}}+C\lambda^{n-2}\rho^{2-n}
 \end{align*}
 for $\ks\in \lambda\Omega$.
 \end{proposition}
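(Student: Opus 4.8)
The plan is to reduce the bracketed quantity, which I denote $A\ks$, to two elementary remainder integrals and then estimate those with the results of this section. Write $z:=v_{\ks}-u_{\ks}$, which lies in $\mathcal{E}_{\ks}$ by Proposition \ref{prop:sol_v}, and keep the notation $B_2=\sum_{a,b=1}^{n}\mu\lambda^{2d}f(\lambda^{-2}|x|^2)H_{ab}(x)\d_a\d_b u_{\ks}$ and $B_3=\Delta_g u_{\ks}-c_nR_gu_{\ks}+n(n-2)u_{\ks}^{\frac{n+2}{n-2}}$ from Proposition \ref{prop:est_v-u-w}; with this notation the third integral appearing in $A\ks$ is exactly $\int_{\Rn}B_2\,w_{\ks}$.

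First I would record two weak identities. Testing the equation satisfied by $v_{\ks}$ (displayed in Section \ref{Sect4}) against $\varphi=z$ gives $\int_{\Rn}\langle\nabla v_{\ks},\nabla z\rangle_g+\int_{\Rn}c_nR_gv_{\ks}z+(n-2)T_c\int_{\d\Rn}|v_{\ks}|^{\frac{2}{n-2}}v_{\ks}z=n(n-2)\int_{\Rn}|v_{\ks}|^{\frac{4}{n-2}}v_{\ks}z$. On the other hand, integrating by parts in $\int_{\Rn}\langle\nabla u_{\ks},\nabla z\rangle_g$, using the exact boundary identity $\d_{x_n}u_{\ks}=(n-2)T_cu_{\ks}^{\frac{n}{n-2}}$ on $\d\Rn$ from \eqref{bubble_half_space}, the fact that $\d\Rn$ is totally geodesic with $g_{na}=\delta_{na}$, and the definition of $B_3$, gives the companion identity $\int_{\Rn}\langle\nabla u_{\ks},\nabla z\rangle_g+\int_{\Rn}c_nR_gu_{\ks}z+(n-2)T_c\int_{\d\Rn}u_{\ks}^{\frac{n}{n-2}}z=-\int_{\Rn}B_3z+n(n-2)\int_{\Rn}u_{\ks}^{\frac{n+2}{n-2}}z$. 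Subtracting furnishes the quadratic piece $\int_{\Rn}(|\nabla z|_g^2+c_nR_gz^2)$, while twice the companion identity furnishes the cross term, once one uses $|\nabla v_{\ks}|_g^2-|\nabla u_{\ks}|_g^2=2\langle\nabla u_{\ks},\nabla z\rangle_g+|\nabla z|_g^2$ and $v_{\ks}^2-u_{\ks}^2=2u_{\ks}z+z^2$.

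Combining these with the pointwise identity $(|v_{\ks}|^{\frac{4}{n-2}}-u_{\ks}^{\frac{4}{n-2}})u_{\ks}v_{\ks}-(|v_{\ks}|^{\frac{2n}{n-2}}-u_{\ks}^{\frac{2n}{n-2}})=-(|v_{\ks}|^{\frac{4}{n-2}}v_{\ks}+u_{\ks}^{\frac{n+2}{n-2}})z$ and its boundary counterpart $(|v_{\ks}|^{\frac{2(n-1)}{n-2}}-u_{\ks}^{\frac{2(n-1)}{n-2}})-(|v_{\ks}|^{\frac{2}{n-2}}-u_{\ks}^{\frac{2}{n-2}})u_{\ks}v_{\ks}=(|v_{\ks}|^{\frac{2}{n-2}}v_{\ks}+u_{\ks}^{\frac{n}{n-2}})z$, every curvature term and every fully nonlinear term in $A\ks$ cancels, leaving $A\ks=-\int_{\Rn}B_3\,z-\int_{\Rn}B_2\,w_{\ks}=-\int_{\Rn}(B_3+B_2)\,z+\int_{\Rn}B_2\,(v_{\ks}-u_{\ks}-w_{\ks})$. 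Then H\"older's inequality bounds $|A\ks|$ by $\|B_3+B_2\|_{L^{\conj}(\Rn)}\|z\|_{L^{\crit}(\Rn)}+\|B_2\|_{L^{\conj}(\Rn)}\|v_{\ks}-u_{\ks}-w_{\ks}\|_{L^{\crit}(\Rn)}$, and I would insert \eqref{asym esti2} for $\|B_3+B_2\|_{L^{\conj}}$, \eqref{sol esti} for $\|z\|_{L^{\crit}}$, the bound $\|B_2\|_{L^{\conj}(\Rn)}\le C\mu\lambda^{2d+2}+C\lambda^{\frac{n-2}{2}}\rho^{\frac{2-n}{2}}$ coming from \eqref{asym esti1}--\eqref{asym esti2}, and Proposition \ref{prop:est_v-u-w} for $\|v_{\ks}-u_{\ks}-w_{\ks}\|_{L^{\crit}}$. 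The leading term $C\mu^{\frac{2(n-1)}{n-2}}\lambda^{\frac{(4d+4)(n-1)}{n-2}}$ falls out of $\|B_2\|\cdot\|v_{\ks}-u_{\ks}-w_{\ks}\|$ via the exponent identities $1+\frac{n}{n-2}=\frac{2(n-1)}{n-2}$ and $(2d+2)(1+\frac{n}{n-2})=\frac{(4d+4)(n-1)}{n-2}$, while all remaining products are absorbed into $C\mu\lambda^{2d+2+\frac{n-2}{2}}\rho^{\frac{2-n}{2}}+C\lambda^{n-2}\rho^{2-n}$ using $0<\mu\le1$, $0<\lambda\le\rho\le1$, $d<(n-6)/4$ and $n\ge4$.

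The hard part is the cancellation step: one must test with exactly $\varphi=z$ (not with $w_{\ks}$), pair the $v_{\ks}$-equation against the remainder-carrying $u_{\ks}$-identity, and check term by term — including both boundary integrals — that $A\ks$ collapses to $-\int_{\Rn}B_3z-\int_{\Rn}B_2w_{\ks}$. Once this identity is in hand, everything else is a routine application of the estimates of Section \ref{Sect4}.
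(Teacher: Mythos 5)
Your proposal is correct and follows essentially the same route as the paper: test the $v_{\ks}$-equation against $z=v_{\ks}-u_{\ks}\in\mathcal{E}_{\ks}$, integrate by parts on $u_{\ks}$ using \eqref{bubble_half_space}, use the elementary pointwise identities to reduce the bracketed quantity to $-\int_{\Rn}(B_2+B_3)z+\int_{\Rn}B_2(v_{\ks}-u_{\ks}-w_{\ks})$, and then invoke H\"older together with \eqref{asym esti2}, \eqref{sol esti} and Proposition~\ref{prop:est_v-u-w}. The paper presents the same cancellation slightly more tersely (adding the weak identity for $v_{\ks}$ to the integrated-by-parts identity for $u_{\ks}$ rather than splitting into quadratic and cross pieces), but the decomposition and the final estimates coincide.
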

 
 \begin{proof}
 By definition of $v_{\ks}$, we have 
 \begin{align*}
 0=&\int_{\Rn}\left[\langle\nabla v_{\ks},\nabla(v_{\ks}-u_{\ks})\rangle_g+c_n R_g v_{\ks}(v_{\ks}-u_{\ks}) \right]\\
 &-n(n-2)\int_{\Rn}|v_{\ks}|^{\frac{4}{n-2}}v_{\ks}(v_{\ks}-u_{\ks})\\
 &+(n-2)T_c\int_{\d\Rn}|v_{\ks}|^{\frac{2}{n-2}}v_{\ks}(v_{\ks}-u_{\ks}).
 \end{align*}
By \eqref{bubble_half_space}, \eqref{asym esti2} and \eqref{sol esti}, an integration by parts gives 
 \begin{align*}
 &\left|\int_{\Rn}\left[\langle \nabla u_{\ks}, \nabla v_{\ks}\rangle_g-|\nabla u_{\ks}|_{g}^{2} +c_n R_g u_{\ks}(v_{\ks}-u_{\ks})\right]\right.\\
 &~-\int_{\Rn}n(n-2)u_{\ks}^{\frac{n+2}{n-2}}(v_{\ks}-u_{\ks})\\
 &~+\left.\int_{\d\Rn}(n-2)T_c u_{\ks}^{\frac{n}{n-2}}(v_{\ks}-u_{\ks})\right.\\
 &~\left.-\int_{\Rn}\displaystyle{\sum_{a,b=1}^{n}}\mu\lambda^{2d}f(\lambda^{-2}|x|^2)H_{ab}(x)\d_a\d_b u_{\ks}(v_{\ks}-u_{\ks})\right|\\
 =&\left|\int_{\Rn}(\Delta_g u_{\ks} -c_n R_g u_{\ks}+n(n-2) u_{\ks}^{\frac{n+2}{n-2}}\right.\\
 &\qquad\left.+\sum_{a,b=1}^{n}\mu\lambda^{2d}f(\lambda^{-2}|x|^2)H_{ab}(x)\d_a\d_b u_{\ks})(u_{\ks}-v_{\ks})\right|\\
 \leq& \|v_{\ks}-u_{\ks}\|_{L^{\frac{2n}{n-2}}(\Rn)}\cdot\left \|\Delta_g u_{\ks} -c_n R_g u_{\ks}+n(n-2) u_{\ks}^{\frac{n+2}{n-2}}\right.\\
 &\qquad\qquad\qquad~~\left.+\sum_{a,b=1}^{n}\mu\lambda^{2d}f(\lambda^{-2}|x|^2)H_{ab}(x)\d_a\d_b u_{\ks}\right\|_{L^{\frac{2n}{n+2}}(\Rn)}\\
 \leq& C\lambda^{\frac{2n(2d+2)}{n-2}}\mu^3 +C\lambda^{2d+2}\mu \lambda^{\frac{n-2}{2}}\rho^{\frac{2-n}{2}}+C\lambda^{n-2}\rho^{2-n}.
 \end{align*}
 On the other hand, by Proposition \ref{prop:est_v-u-w} we have 
 \begin{align*}
 &\left| \int_{\Rn}\sum_{a,b=1}^{n}\mu\lambda^{2d}f(\lambda^{-2}|x|^2)H_{ab}(x)\d_a\d_b u_{\ks}(v_{\ks}-u_{\ks}-w_{\ks})\right|\\
 \leq& C\lambda^{2d+2}\mu\|v_{\ks}-u_{\ks}-w_{\ks}\|_{L^{\frac{2n}{n-2}}(\Rn)}\\
 \leq& C \lambda^{\frac{(4d+4)(n-1)}{n-2}}\mu^{\frac{2(n-1)}{n-2}}+C\lambda^{2d+2+\frac{n-2}{2}}\mu\rho^{\frac{2-n}{2}}+C\lambda^{n-2}\rho^{2-n}.
 \end{align*}
 Putting these facts together, we obtain the desired estimate.
  \end{proof}
 \begin{proposition}\label{prop:A2}
 There hold
  \begin{align*}
& \left|\int_{\Rn}\left(|v_{\ks}|^{\frac{4}{n-2}}-u_{\ks}^{\frac{4}{n-2}}\right)u_{\ks}v_{\ks}-\frac{2}{n}\int_{\Rn}\left(|v_{\ks}|^{\frac{2n}{n-2}}-u_{\ks}^{\frac{2n}{n-2}}\right) \right|\\
\leq& C\mu^{\frac{2n}{n-2}}\lambda^{\frac{(4d+4)n}{n-2}}+C \lambda^{n}\rho^{-n}
 \end{align*}
 and
 \begin{align*}
& \left|\int_{\d\Rn}\left(|v_{\ks}|^{\frac{2}{n-2}}-u_{\ks}^{\frac{2}{n-2}}\right)u_{\ks}v_{\ks}-\frac{1}{n-1}\int_{\d\Rn}\left(|v_{\ks}|^{\frac{2(n-1)}{n-2}}-u_{\ks}^{\frac{2(n-1)}{n-2}}\right) \right|\\
\leq& C\mu^{\frac{2(n-1)}{n-2}}\lambda^{\frac{(4d+4)(n-1)}{n-2}}+C \lambda^{n-1}\rho^{1-n}
 \end{align*}
  for $\ks\in \lambda\Omega$.
 \end{proposition}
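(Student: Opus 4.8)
The plan is to reduce each of the two inequalities to an elementary pointwise algebraic estimate for its integrand, and then to feed in the $L^{\crit}(\Rn)$- and $L^{\critbordo}(\d\Rn)$-decay of $v_{\ks}-u_{\ks}$ already recorded in \eqref{sol esti}. Write $p=\crit$, so that $p-2=\frac{4}{n-2}$ and, crucially, $\frac{2}{n}=\frac{p-2}{p}$; for $a\in\R$ and $b\ge 0$ put
\[
\Gamma_p(a,b):=\big(|a|^{p-2}-b^{p-2}\big)ab-\frac{p-2}{p}\big(|a|^{p}-b^{p}\big),
\]
so that the first quantity to be bounded is exactly $\big|\int_{\Rn}\Gamma_p(v_{\ks},u_{\ks})\big|$ (recall $u_{\ks}>0$). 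The first step is to prove the pointwise bound $|\Gamma_p(a,b)|\le C|a-b|^{p}$ for all $a\in\R$, $b\ge0$, valid precisely when $2<p<3$, i.e. when $n>6$ --- this is where the dimensional restriction enters (and it is automatic here, since $0\le d<(n-6)/4$ already forces $n>6$). Granting it, Hölder's inequality is trivial and \eqref{sol esti} gives
\[
\Big|\int_{\Rn}\Gamma_p(v_{\ks},u_{\ks})\Big|\le C\|v_{\ks}-u_{\ks}\|_{L^{\crit}(\Rn)}^{\crit}\le C\mu^{\crit}\lambda^{\frac{(4d+4)n}{n-2}}+C\lambda^{n}\rho^{-n},
\]
which is the first assertion.

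For the pointwise bound I would split into the cases $|a-b|\ge b/2$ and $|a-b|<b/2$. In the far case $|a-b|\ge b/2$ one has $|a|\le 3|a-b|$ and $b\le 2|a-b|$, so each of the four monomials making up $\Gamma_p(a,b)$ is $\le C|a-b|^{p}$ and there is nothing more to do. In the near case $|a-b|<b/2$ one has $a\in(b/2,3b/2)$, hence $a>0$ and $t\mapsto|t|^{p-2}t$ is smooth along the segment joining $b$ to $a$; a direct differentiation gives $\partial_a\Gamma_p(a,b)\big|_{a=b}=0$ and $\partial_a^2\Gamma_p(a,b)=(p-1)(p-2)|a|^{p-4}a\,(b-a)$, which again vanishes at $a=b$ and is $O(b^{p-3}|a-b|)$ for $a$ near $b$. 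Taylor's formula then yields $|\Gamma_p(a,b)|\le C\,b^{p-3}|a-b|^{3}$ on this range; since $b\ge 2|a-b|$ and $p-3<0$ we have $b^{p-3}\le C|a-b|^{p-3}$, so $|\Gamma_p(a,b)|\le C|a-b|^{p}$, as wanted.

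The boundary inequality is handled identically with $p$ replaced by $q=\critbordo$: one has $q-2=\frac{2}{n-2}$ and $\frac{1}{n-1}=\frac{q-2}{q}$, and the analogous pointwise estimate $|\Gamma_q(a,b)|\le C|a-b|^{q}$ holds because $2<q<3$ for every $n\ge5$. Integrating $\Gamma_q(v_{\ks},u_{\ks})$ over $\d\Rn$ and using the trace part of \eqref{sol esti},
\[
\Big|\int_{\d\Rn}\Gamma_q(v_{\ks},u_{\ks})\Big|\le C\|v_{\ks}-u_{\ks}\|_{L^{\critbordo}(\d\Rn)}^{\critbordo}\le C\mu^{\critbordo}\lambda^{\frac{(4d+4)(n-1)}{n-2}}+C\lambda^{n-1}\rho^{1-n},
\]
which is the second assertion. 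The only genuinely delicate step is the near-diagonal estimate: one must exploit that $\Gamma_p(\cdot,b)$ vanishes to second order at $a=b$ --- so that the Taylor remainder carries a factor $b^{p-3}$ with a \emph{negative} exponent --- and then use $2<p<3$ to reabsorb that factor into $|a-b|^{p}$; were $p\ge 3$ one would instead need a genuinely $C^{3}$ nonlinearity, which is not available here. Everything else is Hölder's inequality together with the decay already established in \eqref{sol esti}, in the spirit of the corresponding estimates in \cite{Brendle3,Brendle-Marques}.
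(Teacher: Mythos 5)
Your proof is correct and matches the paper's approach: reduce to the pointwise inequality $|\Gamma_p(v_{\ks},u_{\ks})|\le C|v_{\ks}-u_{\ks}|^{p}$ (and the analogous bound with $q=\critbordo$ on $\d\Rn$), then apply H\"older and \eqref{sol esti}. The paper simply asserts this pointwise bound (referring to \cite[Proposition 12]{Brendle3} for the interior case), whereas your near/far dichotomy and Taylor argument supply the elementary verification, correctly isolating the role of $2<p<3$, which holds since $0\le d<(n-6)/4$ forces $n>6$.
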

 \begin{proof}
We only need to prove the second assertion, since the first one is similar to \cite[Proposition 12]{Brendle3} together with \eqref{sol esti}. Observe that
\begin{align*}
&\left|\left(|v_{\ks}|^{\frac{2}{n-2}}-u_{\ks}^{\frac{2}{n-2}}\right)u_{\ks}v_{\ks}-\frac{1}{n-1}\left(|v_{\ks}|^{\frac{2(n-1)}{n-2}}-u_{\ks}^{\frac{2(n-1)}{n-2}}\right) \right|\\
\leq& C|v_{\ks}-u_{\ks}|^{\frac{2(n-1)}{n-2}}.
\end{align*}
This together with \eqref{sol esti} implies
\begin{align*}
& \left|\int_{\d\Rn}\left(|v_{\ks}|^{\frac{2}{n-2}}-u_{\ks}^{\frac{2}{n-2}}\right)u_{\ks}v_{\ks}-\frac{1}{n-1}\int_{\d\Rn}\left(|v_{\ks}|^{\frac{2(n-1)}{n-2}}-u_{\ks}^{\frac{2(n-1)}{n-2}}\right) \right|\\
\leq& C\|v_{\ks}-u_{\ks}\|_{L^{\frac{2(n-1)}{n-2}}(\d\Rn)}^{\frac{2(n-1)}{n-2}}\leq C\mu^{\frac{2(n-1)}{n-2}}\lambda^{\frac{(4d+4)(n-1)}{n-2}}+C\lambda^{n-1}\rho^{1-n}.
\end{align*}
This proves the assertion.
 \end{proof}
 
\begin{proposition}\label{prop:A3}
There holds
\begin{align}\label{est:prop}
&\left|\int_{\Rn}\left( |\nabla u_{\ks}|_{g}^{2}+c_n R_g u_{\ks}^{2}-n(n-2)u_{\ks}^{\frac{2n}{n-2}}\right)+(n-2)T_c\int_{\d\Rn} u_{\ks}^{\frac{2(n-1)}{n-2}}\right.\no\\
&~~\left. -\frac{1}{2}\int_{B_{\rho}^+(0)}\sum_{a,b,c=1}^{n}h_{ac}h_{bc}\d_a u_{\ks}\d_b u_{\ks}+\frac{c_n}{4}\int_{B_{\rho}^+(0)}\sum_{a,b,c=1}^{n}(\d_c h_{ab})^2u_{\ks}^{2}\right|\no\\
 \leq& C\rho^{2d+2}\lambda^{4d+4}\mu^3+C\lambda^{n-2}\rho^{2-n}
\end{align}
 for $\ks\in \lambda\Omega$.
\end{proposition}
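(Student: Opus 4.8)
The plan is to expand the energy functional of the bubble $u_{\ks}$ with respect to the metric $g=\exp(h)$, keeping track only of terms up to second order in $h$, and to estimate the remainder using the pointwise bounds on $h$ and its derivatives. First I would write $g^{ab}=\delta_{ab}-h_{ab}+\frac{1}{2}\sum_c h_{ac}h_{bc}+O(|h|^3)$ and $\sqrt{\det g}=1+\frac{1}{2}\mathrm{tr}\,h+O(|h|^2)$; since $h$ is trace-free this last factor is $1+O(|h|^2)$. Expanding $|\nabla u_{\ks}|_g^2\sqrt{\det g}=\sum_{a,b}g^{ab}\d_a u_{\ks}\d_b u_{\ks}(1+O(|h|^2))$ produces the zeroth-order term $|\nabla u_{\ks}|^2$, the first-order term $-\sum_{a,b}h_{ab}\d_a u_{\ks}\d_b u_{\ks}$, and the second-order term $\frac{1}{2}\sum_{a,b,c}h_{ac}h_{bc}\d_a u_{\ks}\d_b u_{\ks}$ which is exactly the term appearing in \eqref{est:prop}. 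Similarly, the scalar curvature expansion $R_g=-\d_a\d_b h_{ab}+\frac{1}{4}\sum_{a,b,c}(\d_c h_{ab})^2+(\text{other quadratic terms})+O(|h|^3,|\d h||\d^2 h|)$—using the standard formula for $R_g$ in terms of $h$ in normal-type coordinates, and crucially that $h$ is trace-free with $h_{na}=0$—contributes $c_n R_g u_{\ks}^2$; integrating by parts the linear-in-$h$ piece $-c_n(\d_a\d_b h_{ab})u_{\ks}^2$ twice moves both derivatives onto $u_{\ks}^2$, and by the Einstein relation \eqref{eq:Einstein} together with $h_{na}=0$ and $x^a H_{ab}=\d_a H_{ab}=0$ this linear term combines with $-\sum_{a,b}h_{ab}\d_a u_{\ks}\d_b u_{\ks}$ to vanish (this is the mechanism that killed the first-order terms in Brendle's construction and in \cite{Brendle-Marques}). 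The quadratic piece $\frac{c_n}{4}\sum_{a,b,c}(\d_c h_{ab})^2 u_{\ks}^2$ is the remaining term in \eqref{est:prop}; the other quadratic curvature terms either vanish after integration by parts using the algebraic identities on $H$, or are absorbed into the error.

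Next I would handle the boundary: since $\pa\Rn$ is totally geodesic for $g$ (as computed in the text, $\pi_{ij}=0$), the mean curvature term $d_n\int h_g u_{\ks}^2$ is absent, and the trace term reduces to $(n-2)T_c\int_{\d\Rn}u_{\ks}^{2(n-1)/(n-2)}$ precisely as written; there is no boundary contribution at order $|h|$ or $|h|^2$ beyond what is already displayed, because $h_{na}=0$ forces the relevant boundary integrals of $h$-derivatives to vanish. Then I would combine with $-\D u_{\ks}=n(n-2)u_{\ks}^{(n+2)/(n-2)}$ in $\Rn$ and $\d_n u_{\ks}=(n-2)T_c u_{\ks}^{n/(n-2)}$ on $\d\Rn$ from \eqref{bubble_half_space}: multiplying the first by $u_{\ks}$ and integrating by parts shows $\int |\nabla u_{\ks}|^2 = n(n-2)\int u_{\ks}^{2n/(n-2)} + (n-2)T_c\int_{\d\Rn}u_{\ks}^{2(n-1)/(n-2)}$, so the flat part of the quantity in \eqref{est:prop} is exactly the $c_n R_{|dx|^2}u_{\ks}^2=0$ contribution, i.e. zero, and everything reduces to controlling the $h$-dependent remainder.

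Finally, for the error estimate: on $B_\rho^+(0)$ one has $h_{ab}=O(\mu(\lambda+|x|)^{2d+2})$, $\d h=O(\mu(\lambda+|x|)^{2d+1})$, $\d^2 h=O(\mu(\lambda+|x|)^{2d})$, while $|\nabla u_{\ks}|=O(\lambda^{(n-2)/2}(\lambda+|x|)^{1-n})$ and $u_{\ks}=O(\lambda^{(n-2)/2}(\lambda+|x|)^{2-n})$; the cubic-in-$h$ remainder in the energy is then bounded by $C\mu^3\int_{B_\rho^+}(\lambda+|x|)^{6d+6}\cdot\lambda^{n-2}(\lambda+|x|)^{2-2n}\,dx$, and splitting this integral over $|x|\le\lambda$ versus $\lambda\le|x|\le\rho$ and using $6d+6+2-2n+n<0$ (which holds since $d<(n-6)/4$) gives the bound $C\rho^{2d+2}\lambda^{4d+4}\mu^3$; the contribution from the region $|x|\ge\rho$, where $h$ is \emph{not} of the model form but still satisfies $|h|+|\d h|+|\d^2 h|\le\alpha$ supported in $B_1^+$, is estimated crudely by $\int_{\rho\le|x|\le 1}(|h|+|\d h|+|\d^2 h|)(|\nabla u_{\ks}|^2+u_{\ks}^2)\,dx\le C\lambda^{n-2}\rho^{2-n}$ since $u_{\ks}$ and $\nabla u_{\ks}$ decay like $\lambda^{(n-2)/2}|x|^{2-n}$ and $\lambda^{(n-2)/2}|x|^{1-n}$ there. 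The main obstacle is the careful bookkeeping in the second-order expansion of $R_g$: one must verify that all quadratic curvature terms other than $\frac{1}{4}(\d_c h_{ab})^2 u_{\ks}^2$ integrate to something of the admissible error size, which requires repeatedly integrating by parts and invoking the trace-free condition, $h_{na}=0$, the $x_n$-independence of $h$ inside $B_\rho^+$, and the algebraic identities $x^a H_{ab}=\d_a H_{ab}=0$; this is essentially the computation in \cite[Proposition 7]{Brendle3} and \cite[Proposition 5]{Brendle-Marques} adapted to the boundary setting, and I would follow those lines closely, the new feature being only the totally-geodesic boundary which, reassuringly, contributes nothing new.
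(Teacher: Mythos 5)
Your outline follows the paper's proof quite closely: reduce via the bubble equation \eqref{bubble_half_space}, expand $g^{ab}=\delta_{ab}-h_{ab}+\tfrac12\sum_c h_{ac}h_{bc}+O(|h|^3)$, control $R_g$ via the Brendle--Marques expansion under $\d_a h_{ab}=0$, use the Einstein relation \eqref{eq:Einstein} to show $\int h_{ab}\d_a u_{\ks}\d_b u_{\ks}$ is negligible, and then integrate the pointwise bounds over $B_\rho^+$ and its complement. Two small points to tighten. First, the sign in your integration-by-parts identity is off: since the outward normal on $\d\Rn$ is $-e_n$, one gets $\int_{\Rn}|\nabla u_{\ks}|^2 = n(n-2)\int_{\Rn}u_{\ks}^{2n/(n-2)} - (n-2)T_c\int_{\d\Rn}u_{\ks}^{2(n-1)/(n-2)}$; with your $+$ sign the flat part of \eqref{est:prop} would \emph{not} cancel, so the claimed cancellation (which is correct) relies on the corrected sign. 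Second, the linear-in-$h$ contribution does not ``combine with'' $-\int h_{ab}\d_a u_{\ks}\d_b u_{\ks}$ to vanish; rather, $\d_a\d_b h_{ab}\equiv 0$ in $B_\rho^+$ kills the linear piece of $R_g$ directly, and separately the Einstein relation plus $\d_a h_{ab}=0$ makes $\int h_{ab}\d_a u_{\ks}\d_b u_{\ks}=c_n\int h_{ab}\d_a\d_b u_{\ks}^2$ reduce (after two integrations by parts with no boundary contribution since $h_{na}=0$) to an integral over $\Rn\setminus B_\rho^+$, giving the $C\lambda^{n-2}\rho^{4-n}$ error. Your mention of $\sqrt{\det g}$ is harmless but unnecessary: the functional integrals in \eqref{energy functional} are against Lebesgue measure, and in any case $\det(\exp h)=\exp(\mathrm{tr}\,h)=1$ identically since $h$ is trace-free.
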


\begin{proof}
By equation \eqref{bubble_half_space} of $u_{\ks}$, we have 
 \begin{align*}
 \int_{\Rn}|\nabla u_{\ks}|^2 =n(n-2)\int_{\Rn}u_{\ks}^{\frac{2n}{n-2}}-(n-2)T_c\int_{\d\Rn}u_{\ks}^{\frac{2(n-1)}{n-2}}.
 \end{align*}
Then the LHS of \eqref{est:prop} becomes
 \begin{align*}
 &\left|\int_{\Rn}\left( |\nabla u_{\ks}|_{g}^{2}+c_n R_g u_{\ks}^{2}-|\nabla u_{\ks}|^2\right)-\frac{1}{2}\int_{B_{\rho}^+(0)}\sum_{a,b,c=1}^{n}h_{ac}h_{bc}\d_a u_{\ks}\d_b u_{\ks}\right.\\
&\left. +\frac{c_n}{4}\int_{B_{\rho}^+(0)}\sum_{a,b,c=1}^{n}(\d_c h_{ab})^2u_{\ks}^{2}\right|.
 \end{align*}
 Notice that 
 \begin{align*}
 &\left|g^{ab}(x)-\delta_{ab}(x)+h_{ab}(x)-\frac{1}{2}\sum_{a,b,c=1}^{n}h_{ac}(x)h_{bc}(x) \right|\\
 \leq& C|h(x)|^3\leq C\mu^3 (\lambda+|x|)^{6d+6}
 \end{align*}
for $x \in B_\rho^+(0)$.
 This implies 
 \begin{align*}
  &\left|\int_{\Rn}\left( |\nabla u_{\ks}|_{g}^{2}-|\nabla u_{\ks}|^2\right)+\int_{\Rn}\sum_{a,b=1}^{n}h_{ab}\d_a u_{\ks}\d_b u_{\ks}\right.\\
&\left.~~ -\frac{1}{2}\int_{B_{\rho}^+(0)}\sum_{a,b,c=1}^{n}h_{ac}h_{bc}\d_a u_{\ks}\d_b u_{\ks}\right|\\
&\leq C\lambda^{n-2}\mu^3\int_{B_{\rho}^+(0)}(\lambda+|x|)^{6d+6+2-2n} +C\lambda^{n-2}\int_{\Rn\setminus B_{\rho}^+(0)}(\lambda+|x|)^{2-2n}\\
&\leq C\mu^3 \rho^{2d+2}\lambda^{4d+4}+C\lambda^{n-2}\rho^{2-n}.
 \end{align*}
Since $\pa_a h_{ab}(x)=0$ in $B_\rho^+(0)$, it follows from \cite[Proposition 4]{Brendle-Marques} that 
 \begin{align*}
  &\left|R_g(x)+\frac{1}{4}\sum_{a,b,c=1}^{n}(\d_c h_{ab}(x))^2 \right|\\
  \leq& C|h(x)|^2 |\d^2 h(x)| +C |h(x)||\d h(x)|^2\leq C\mu^3 (\lambda+|x|)^{6d+4}
 \end{align*}
 for $x \in B_\rho^+(0)$. This implies 
 \begin{align*}
  &\left|\int_{\Rn} R_g(x)u_{\ks}^{2}+\frac{1}{4}\int_{B_{\rho}^+(0)}\sum_{i,k,l=1}^{n}(\d_l h_{ik}(x))^2 u_{\ks}^2 \right|\\
  \leq& C \lambda^{n-2}\mu^3 \int_{B_{\rho}^+(0)}(\lambda+|x|)^{6d+6+2-2n}+C\lambda^{n-2}\int_{\Rn\setminus B_{\rho}^+(0)}(\lambda+|x|)^{4-2n}\\
 \leq& C\mu^3 \rho^{2d+2}\lambda^{4d+4}+C\lambda^{n-2}\rho^{4-n} .
 \end{align*}
Since $h_{ab}(x)$ is trace-free, by \eqref{eq:Einstein} we obtain 
 \begin{align*}
 \sum_{a,b=1}^{n}h_{ab}\d_a u_{\ks}\d_b u_{\ks} =c_n\sum_{a,b=1}^{n}h_{ab}\d_a\d_b u_{\ks}^2 ,
 \end{align*}
 then
 \begin{align*}
 \int_{\Rn} \sum_{a,b=1}^{n}h_{ab}\d_a u_{\ks}\d_b u_{\ks} =c_n\int_{\Rn}\sum_{a,b=1}^{n}h_{ab}\d_a\d_b u_{\ks}^2 .
 \end{align*}
 Again by $\pa_a h_{ab}(x)=0$ in $B_\rho^+(0)$, we obtain
 \begin{align*}
 \left| \int_{\Rn}\sum_{a,b=1}^{n}h_{ab}\d_a u_{\ks}\d_b u_{\ks}\right|\leq \int_{\Rn\setminus B_{\rho}^+(0)}u_{\ks}^{2}\leq C\lambda^{n-2}\rho^{4-n}.
 \end{align*}
 Then the desired estimate follows from all the above facts.
 \end{proof}

Consequently, collecting Propositions \ref{prop:A1}- \ref{prop:A3} together, we arrive at the following key estimate.
 \begin{corollary}\label{Cor:key_est}
 Let $\mathcal{F}_{g}(\xi,\e)$ be the function defined in \eqref{energy functional}, then for any $\ks\in\lambda\Omega$, there holds
 \begin{align*}
 &\left|\mathcal{F}_{g}(\xi,\e)-\frac{1}{2}\int_{B_{\rho}^{+}(0)}\sum_{a,b,c=1}^n h_{ac}h_{bc}\d_a u_{\ks}\d_{b} u_{\ks} +\frac{c_n}{4} \int_{B_{\rho}^{+}(0)}\sum_{a,b,c=1}^n(\d_c h_{ab})^{2}u_{\ks}^{2}\right.\\
 &\left.-\mu\lambda^{2d}\int_{\Rn}f(\lambda^{-2}|x'|^{2})w_{\ks}\sum_{a,b=1}^n H_{ab}(x)\d_a\d_b u_{\ks}\right|\no\\
 \leq& C\mu^{\frac{2(n-1)}{n-2}}\lambda^{\frac{(4d+4)(n-1)}{n-2}}+C\mu\lambda^{2d+2+\frac{n-2}{2}}\rho^{\frac{2-n}{2}}+C\lambda^{n-2}\rho^{2-n},
 \end{align*}
 where $w_{\ks}$ satisfies \eqref{linear approximation}.
 \end{corollary}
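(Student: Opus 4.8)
The plan is to expand the energy functional $\mathcal{F}_g\ks$ directly from its definition \eqref{energy functional} and to repeatedly substitute the bubble equation \eqref{bubble_half_space}, thereby cancelling the leading-order terms and isolating the three quantities appearing in the statement: the quadratic-in-$h$ term $\frac12\int_{B_\rho^+}\sum h_{ac}h_{bc}\d_a u_{\ks}\d_b u_{\ks}$, the gradient-of-$h$ term $\frac{c_n}{4}\int_{B_\rho^+}\sum(\d_c h_{ab})^2 u_{\ks}^2$, and the mixed term $\mu\lambda^{2d}\int_{\Rn} f(\lambda^{-2}|x'|^2)w_{\ks}\sum H_{ab}\d_a\d_b u_{\ks}$. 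Concretely, I would first rewrite $\mathcal{F}_g\ks$ by splitting $v_{\ks}=u_{\ks}+(v_{\ks}-u_{\ks})$ and using the weak formulation \eqref{annihilator} (with the specific $h$ chosen in Section~\ref{Sect4}, so there is no boundary mean-curvature term) tested against $v_{\ks}-u_{\ks}$ to replace the Dirichlet-type energy of $v_{\ks}$ by the one of $u_{\ks}$ plus error. This is exactly the content of Proposition~\ref{prop:A1}, which handles the ``replace $v_{\ks}$ by $u_{\ks}$'' step and simultaneously produces the mixed term $\int_{\Rn}\sum \mu\lambda^{2d}f(\lambda^{-2}|x|^2)H_{ab}\d_a\d_b u_{\ks}\,w_{\ks}$ up to the stated error $C\lambda^{\frac{(4d+4)(n-1)}{n-2}}\mu^{\frac{2(n-1)}{n-2}}+C\mu\lambda^{2d+2+\frac{n-2}{2}}\rho^{\frac{2-n}{2}}+C\lambda^{n-2}\rho^{2-n}$.

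Next I would control the purely nonlinear pieces: the terms $\int_{\Rn}(|v_{\ks}|^{4/(n-2)}-u_{\ks}^{4/(n-2)})u_{\ks}v_{\ks}-(|v_{\ks}|^{2n/(n-2)}-u_{\ks}^{2n/(n-2)})$ and the boundary analogue. Proposition~\ref{prop:A2} shows that, modulo the errors $C\mu^{\frac{2n}{n-2}}\lambda^{\frac{(4d+4)n}{n-2}}+C\lambda^n\rho^{-n}$ and $C\mu^{\frac{2(n-1)}{n-2}}\lambda^{\frac{(4d+4)(n-1)}{n-2}}+C\lambda^{n-1}\rho^{1-n}$, the volume nonlinearity is $\tfrac{2}{n}$ times the $L^{2n/(n-2)}$-difference and the boundary nonlinearity is $\tfrac{1}{n-1}$ times the $L^{2(n-1)/(n-2)}$-difference; the coefficients $(n-2)^2$, $\frac{(n-2)^2}{n-1}T_c$, $2(n-2)$ and $\frac{n-2}{n-1}T_c$ in \eqref{energy functional} are rigged precisely so that, after substituting these identities together with $n(n-2)\cdot\frac{2}{n}=2(n-2)$ and the corresponding boundary bookkeeping, all the nonlinear contributions either cancel against the subtracted normalizing integrals $2(n-2)\int u_{\ks}^{2n/(n-2)}$, $\frac{n-2}{n-1}T_c\int u_{\ks}^{2(n-1)/(n-2)}$ or combine with the $\int_{\Rn} n(n-2)u_{\ks}^{\frac{n+2}{n-2}}(v_{\ks}-u_{\ks})$ and $\int_{\d\Rn}(n-2)T_c u_{\ks}^{n/(n-2)}(v_{\ks}-u_{\ks})$ coming from Proposition~\ref{prop:A1}. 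What remains after this cancellation is exactly the ``bubble-on-$g$'' energy $\int_{\Rn}(|\nabla u_{\ks}|_g^2+c_n R_g u_{\ks}^2-n(n-2)u_{\ks}^{2n/(n-2)})+(n-2)T_c\int_{\d\Rn}u_{\ks}^{2(n-1)/(n-2)}$, plus the mixed term.

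The final input is Proposition~\ref{prop:A3}, which expands precisely that bubble energy: using $g^{ab}=\delta_{ab}-h_{ab}+\frac12\sum_c h_{ac}h_{bc}+O(|h|^3)$, the vanishing $\d_a h_{ab}=0$ in $B_\rho^+$, the Einstein identity \eqref{eq:Einstein} (which kills the linear-in-$h$ cross term $\int\sum h_{ab}\d_a u_{\ks}\d_b u_{\ks}$ since it equals $c_n\int\sum h_{ab}\d_a\d_b u_{\ks}^2$ and integration by parts against $\d_a h_{ab}=0$ leaves only a boundary-of-$B_\rho^+$ tail), and the scalar-curvature expansion $R_g=-\frac14\sum(\d_c h_{ab})^2+O(|h|^3+|h||\d h|^2+|h|^2|\d^2 h|)$ from \cite[Proposition 4]{Brendle-Marques}, one obtains that this energy equals $\frac12\int_{B_\rho^+}\sum h_{ac}h_{bc}\d_a u_{\ks}\d_b u_{\ks}-\frac{c_n}{4}\int_{B_\rho^+}\sum(\d_c h_{ab})^2 u_{\ks}^2$ up to $C\mu^3\rho^{2d+2}\lambda^{4d+4}+C\lambda^{n-2}\rho^{2-n}$. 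Assembling the three propositions, all error terms are bounded by sums of the listed quantities after using $0\le d<(n-6)/4$ and $\lambda\le\rho\le 1$ to absorb $\mu^3\rho^{2d+2}\lambda^{4d+4}$, $\mu^{\frac{2n}{n-2}}\lambda^{\frac{(4d+4)n}{n-2}}$, etc., into $C\mu^{\frac{2(n-1)}{n-2}}\lambda^{\frac{(4d+4)(n-1)}{n-2}}+C\mu\lambda^{2d+2+\frac{n-2}{2}}\rho^{\frac{2-n}{2}}+C\lambda^{n-2}\rho^{2-n}$; note $\mu\le 1$ makes $\mu^3\le\mu^{\frac{2(n-1)}{n-2}}$ and $\lambda^{4d+4}\rho^{2d+2}\le\lambda^{\frac{(4d+4)(n-1)}{n-2}}$ for the relevant range, while $\lambda^n\rho^{-n}\le\lambda^{n-2}\rho^{2-n}$. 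I expect the main obstacle to be the careful bookkeeping of coefficients in the cancellation between the nonlinear integrals of $\mathcal{F}_g$ and the terms produced by Proposition~\ref{prop:A1}: one must track the signs and the $n$-dependent constants through the identities $n(n-2)|v|^{4/(n-2)}v\,(v-u)$ versus $(n-2)^2|v|^{2n/(n-2)}$, and verify that nothing of order $\|v_{\ks}-u_{\ks}\|^2_{\mathcal{E}}$ survives uncontrolled — but this is exactly what Propositions~\ref{prop:A1}--\ref{prop:A2} are engineered to guarantee, so the corollary follows by adding the three estimates and simplifying the error bound.
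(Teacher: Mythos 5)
Your proposal is correct and takes exactly the same route as the paper, whose own proof of Corollary~\ref{Cor:key_est} is the single sentence ``collect Propositions~\ref{prop:A1}--\ref{prop:A3}''; you reconstruct the bookkeeping correctly, in particular the identities $(n-2)^2+2(n-2)=n(n-2)$ and $\frac{(n-2)^2+(n-2)}{n-1}=n-2$ that make the $v_\ks$-nonlinearities and the subtracted normalizing integrals recombine into the pure bubble energy of Proposition~\ref{prop:A3}, and you correctly note that $h_g\equiv 0$ for the Section~\ref{Sect4} metric so no boundary mean-curvature term appears. One small caveat: the inequality $\lambda^{4d+4}\rho^{2d+2}\le\lambda^{\frac{(4d+4)(n-1)}{n-2}}$ you invoke to absorb the Proposition~\ref{prop:A3} error is not true for all $0<\lambda\le\rho\le 1$ (it fails when $\rho$ is of order $1$ and $\lambda$ is small, since $\frac{(4d+4)(n-1)}{n-2}>4d+4$); the clean way to absorb that term is to use the sharper bound $C\mu^3\lambda^{6d+6}$ that the proof of Proposition~\ref{prop:A3} actually yields when $6d+8<n$ (true in the relevant case $d=1$, $n\ge 62$), for which $\mu^3\lambda^{6d+6}\le\mu^{\frac{2(n-1)}{n-2}}\lambda^{\frac{(4d+4)(n-1)}{n-2}}$ is immediate from $\mu,\lambda\le 1$.
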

 
 \section{Finding a critical point of an auxiliary function}\label{Sect5}
We define
 \begin{align*}
 \mathcal{F}\ks=&\frac{1}{2}\int_{\Rn}\sum_{i,j,l=1}^{n-1}\overline H_{il}\overline H_{jl}\d_i u_{\ks}\d_{j} u_{\ks} -\frac{c_n}{4} \int_{\Rn}\sum_{i,j,l=1}^{n-1}(\d_l \overline H_{ij})^{2}u_{\ks}^{2}\\
 &+\int_{\Rn}\sum_{i,j=1}^{n-1} \overline H_{ij}\d_i\d_j u_{\ks}z_{\ks}
 \end{align*}
 for $\ks \in \Rn\times (0,\infty)$, where $z_{\ks}(x)=\mu^{-1} w_{\ks}(x)$, which satisfies
 \begin{align}
  &\int_{\Rn}\langle \nabla z_{\ks},\nabla\varphi\rangle-n(n+2)u_{\ks}^{\frac{4}{n-2}}z_{\ks}\varphi+n T_c\int_{\d\Rn} u_{\ks}^{\frac{2}{n-2}}z_{\ks}\varphi\no\\
 &=-\int_{\Rn}f(|x'|^2)\overline H_{ij}(x)\d_i\d_ju_{\ks}\varphi
 \end{align}
 for all test function $\varphi\in \mathcal{E}_{\ks}$.
  
Next we show that the function $\mathcal F{\ks}$ has a strict local minimum. Throughout this section we use indices $1\leq i,j,k,l,m,p,q,r\leq n-1$ .

Since $\overline H_{ab}(-x)=\overline H_{ab}(x)$ for any $x\in\Rn$, the function $\mathcal{F}\ks$ satisfies $\mathcal{F}\ks=\mathcal F(-\xi,\epsilon)$ for all $\ks\in\R^{n-1}\times(0,\infty)$. This implies
 \begin{equation}\label{eq:mixed_d^2F}
 \frac{\d}{\d\xi_p}\mathcal F(0,\epsilon)=\frac{\d^{2}}{\d\epsilon\d\xi_p}\mathcal F(0,\epsilon)=0
 \end{equation}
  for all $\epsilon>0$ .
  \begin{proposition}\label{prop:elementary_iden1}
  There hold
  \begin{align*}
   &\int_{\Sp^{n-2}_{r}(0)}\displaystyle{\sum_{i,k,l=1}^{n-1}}(\d_l H_{ik}(x))^2 x_p x_q\\
   =&\frac{2r^{n+2}}{(n-1)(n+1)}|\Sp^{n-2}|\displaystyle{\sum_{i,k,l=1}^{n-1}}(\overline{W}_{ipkl}+\overline W_{ilkp})(\overline W_{iqkl}+\overline W_{ilkq})\\
   &+\frac{r^{n+2}}{(n-1)(n+1)}|\Sp^{n-2}|\displaystyle{\sum_{i,j,k,l=1}^{n-1}}(\overline W_{ijkl}+\overline W_{ilkj})^2 \delta_{pq}
  \end{align*}
  and 
  \begin{align*}
  &\int_{\Sp^{n-2}_{r}(0)}\displaystyle{\sum_{i,k=1}^{n-1}}H_{ik}(x)^2 x_p x_q \\
  =&\frac{2r^{n+4}}{(n-1)(n+1)(n+3)}|\Sp^{n-2}|\displaystyle{\sum_{i,k,l=1}^{n-1}}(\overline W_{ipkl}+\overline W_{ilkp})(\overline W_{iqkl}+\overline W_{ilkq})\\
  &+\frac{r^{n+4}}{2(n-1)(n+1)(n+3)}|\Sp^{n-2}|\displaystyle{\sum_{i,j,k,l=1}^{n-1}}(\overline W_{ijkl}+\overline W_{ilkj})^2\delta_{pq}.
  \end{align*}
  \end{proposition}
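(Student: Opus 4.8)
The plan is to reduce both identities to the elementary formulas for the integrals of monomials over a round sphere. Since $\Sp^{n-2}_{r}(0)$ is the sphere of radius $r$ and dimension $n-2$ inside $\R^{n-1}$, homogeneity together with the standard even-moment formula on a round sphere gives
\[
\int_{\Sp^{n-2}_{r}(0)} x_a x_b x_c x_e = \frac{r^{n+2}\,|\Sp^{n-2}|}{(n-1)(n+1)}\left(\delta_{ab}\delta_{ce}+\delta_{ac}\delta_{be}+\delta_{ae}\delta_{bc}\right)
\]
and
\[
\int_{\Sp^{n-2}_{r}(0)} x_{a_1}x_{a_2}x_{a_3}x_{a_4}x_{a_5}x_{a_6} = \frac{r^{n+4}\,|\Sp^{n-2}|}{(n-1)(n+1)(n+3)}\sum_{\mathrm{pairings}}\delta_{a_{\sigma(1)}a_{\sigma(2)}}\delta_{a_{\sigma(3)}a_{\sigma(4)}}\delta_{a_{\sigma(5)}a_{\sigma(6)}},
\]
where the last sum runs over the $15$ partitions of $\{1,\dots,6\}$ into three unordered pairs; here the power of $r$ is the degree of the monomial plus $n-2$, and the denominators arise from the ambient dimension $n-1$. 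All odd-degree monomials integrate to zero, which is why only these two even moments are needed.

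For the first identity I would begin from $H_{ik}(x)=\sum_{a,b}\overline{W}_{iakb}\,x^ax^b$ (this is \eqref{def:H_ab} with the second free index renamed) and use the Leibniz rule, which gives
\[
\d_l H_{ik}(x)=\sum_{m}(\overline{W}_{ilkm}+\overline{W}_{imkl})\,x^m .
\]
Hence $\sum_{i,k,l}(\d_l H_{ik})^2 x_p x_q=\sum_{i,k,l,m,m'}(\overline{W}_{ilkm}+\overline{W}_{imkl})(\overline{W}_{ilkm'}+\overline{W}_{im'kl})\,x^m x^{m'}x_p x_q$, and substituting the degree-$4$ formula above, the pairing $\delta_{mm'}\delta_{pq}$ contributes $\sum_{i,j,k,l}(\overline{W}_{ijkl}+\overline{W}_{ilkj})^2\delta_{pq}$ after relabelling, while the two pairings $\delta_{mp}\delta_{m'q}$ and $\delta_{mq}\delta_{m'p}$ each contribute $\sum_{i,k,l}(\overline{W}_{ipkl}+\overline{W}_{ilkp})(\overline{W}_{iqkl}+\overline{W}_{ilkq})$. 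Adding the three contributions, all carrying the common prefactor $r^{n+2}|\Sp^{n-2}|/[(n-1)(n+1)]$, reproduces the first identity; note that no algebraic property of $\overline{W}$ beyond the definition of $H$ is used here.

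For the second identity I would expand $\sum_{i,k}H_{ik}(x)^2 x_p x_q=\sum_{i,k}\overline{W}_{iakb}\,\overline{W}_{ickd}\,x^ax^bx^cx^dx_px_q$ and insert the degree-$6$ formula, regarding $\{a,b,c,d,p,q\}$ as the six slots. The single non-trivial input is that any pairing contracting the two summed indices of one and the same factor — that is, $\delta_{ab}$ or $\delta_{cd}$ — annihilates that term, since $\overline{W}$ is totally trace-free: $\sum_a\overline{W}_{iaka}=0$, which follows from the antisymmetries and the pair symmetry together with the vanishing of the trace over the first and third indices. Exactly $5$ of the $15$ pairings are killed in this way. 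Among the remaining $10$, the two that pair $p$ with $q$ produce, after relabelling, $\big(\sum_{i,j,k,l}\overline{W}_{ijkl}^2+\sum_{i,j,k,l}\overline{W}_{ijkl}\overline{W}_{ilkj}\big)\delta_{pq}=\tfrac{1}{2}\sum_{i,j,k,l}(\overline{W}_{ijkl}+\overline{W}_{ilkj})^2\delta_{pq}$; the other eight pair each of $p$ and $q$ with a summed index, coincide in pairs after relabelling the dummy indices, and add up to $2\sum_{i,k,l}(\overline{W}_{ipkl}+\overline{W}_{ilkp})(\overline{W}_{iqkl}+\overline{W}_{ilkq})$. Multiplying by $r^{n+4}|\Sp^{n-2}|/[(n-1)(n+1)(n+3)]$ yields the second identity.

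The step I expect to be the main obstacle is the bookkeeping in this degree-$6$ computation: one must organize the $15$ pairings, discard the $5$ that vanish by trace-freeness, and then, after unwinding the Kronecker deltas, keep track of how many distinct pairings collapse onto the same contraction, so that the combinatorial multiplicities reproduce precisely the coefficients $2$ and $\tfrac{1}{2}$ appearing in the statement. The first identity and the moment formulas themselves are routine.
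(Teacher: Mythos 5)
Your computation is correct, and it is essentially the same argument as in the paper, which simply refers to \cite[Proposition 16]{Brendle3}: reduce to the second and third even moments of monomials on $\Sp^{n-2}_r(0)\subset\R^{n-1}$, then contract, with the trace-freeness of $\overline W$ killing the five pairings that contract $a$ with $b$ or $c$ with $d$ in the degree-6 case. The combinatorial bookkeeping you flag as the delicate point (the eight pairings coinciding in pairs, yielding the factor $2$, and the two $\delta_{pq}$-pairings combining into $\tfrac12\sum(\overline W_{ijkl}+\overline W_{ilkj})^2$) all checks out.
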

  \begin{proof}
The proof is similar to \cite[Proposition 16]{Brendle3}.
  \end{proof}
  \begin{proposition}\label{cal1}
 There holds
  \begin{align*}
  &\int_{\Sp_{r}^{n-2}(0)}\sum_{i,k,l=1}^{n-1}(\d_{l}\overline H_{ik}(x))^2x_p x_q\\
 =&\left\{2(\overline W_{ipkl}+\overline W_{ilkp})(\overline W_{iqkl}+\overline W_{ilkq})\left[ (n+3)f(r^2)^2+8r^2f(r^2)f^\prime(r^2)+4r^4f^\prime(r^2)^2 \right]\right.\\
 &\left.~+(\overline W_{ijkl}+\overline W_{ilkj})^2\delta_{pq}\left[ (n+3)f(r^2)^2+4r^2f(r^2)f^\prime(r^2)+2r^4f^\prime(r^2)^2\right]\right\}\\
 &\cdot\frac{|\Sp^{n-2}|r^{n+2}}{(n-1)(n+1)(n+3)}.
  \end{align*} 
 \end{proposition}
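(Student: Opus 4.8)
The plan is to reduce the spherical integral to the two elementary integrals already computed in Proposition \ref{prop:elementary_iden1}. First I would differentiate $\overline H_{ik}(x)=f(|x'|^2)H_{ik}(x)$ by the chain rule,
$$\d_l \overline H_{ik}(x)=2x_l f'(|x'|^2)H_{ik}(x)+f(|x'|^2)\d_l H_{ik}(x),$$
then square and sum over $1\le i,k,l\le n-1$ to get
$$\sum_{i,k,l}(\d_l\overline H_{ik})^2=4f'(|x'|^2)^2\Big(\sum_l x_l^2\Big)\sum_{i,k}H_{ik}^2+4f(|x'|^2)f'(|x'|^2)\sum_{i,k,l}x_l H_{ik}\d_l H_{ik}+f(|x'|^2)^2\sum_{i,k,l}(\d_l H_{ik})^2.$$

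On the sphere $|x'|=r$ one has $\sum_l x_l^2=r^2$, and since each $H_{ik}(x)=\overline W_{ikjl}x^jx^l$ is homogeneous of degree $2$ in $x'$, the Euler relation gives $\sum_l x_l\d_l H_{ik}=2H_{ik}$, hence $\sum_{i,k,l}x_l H_{ik}\d_l H_{ik}=2\sum_{i,k}H_{ik}^2$. Therefore, pointwise on $\Sp^{n-2}_r(0)$, the functions $f,f'$ are frozen at the value of the argument $r^2$ and
$$\sum_{i,k,l}(\d_l\overline H_{ik}(x))^2=\big(4r^2 f'(r^2)^2+8f(r^2)f'(r^2)\big)\sum_{i,k}H_{ik}(x)^2+f(r^2)^2\sum_{i,k,l}(\d_l H_{ik}(x))^2.$$
Multiplying by $x_p x_q$, integrating over $\Sp^{n-2}_r(0)$, and inserting the two formulas of Proposition \ref{prop:elementary_iden1}, I would then factor $\frac{|\Sp^{n-2}|r^{n+2}}{(n-1)(n+1)(n+3)}$ out of both contributions: the $f(r^2)^2$ term, coming from the $(\d_l H_{ik})^2$ integral whose denominator is only $(n-1)(n+1)$, picks up an extra factor $(n+3)$, while the terms with $f f'$ and $f'^2$, coming from the $H_{ik}^2$ integral with an $r^{n+4}$, pick up an extra factor $r^2$. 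Collecting the coefficients of $\sum(\overline W_{ipkl}+\overline W_{ilkp})(\overline W_{iqkl}+\overline W_{ilkq})$ and of $\sum(\overline W_{ijkl}+\overline W_{ilkj})^2\delta_{pq}$ — recalling that the $\delta_{pq}$ part of the $H_{ik}^2$ integral carries an additional $\tfrac12$ — reproduces exactly the two bracketed expressions $(n+3)f(r^2)^2+8r^2f(r^2)f'(r^2)+4r^4f'(r^2)^2$ and $(n+3)f(r^2)^2+4r^2f(r^2)f'(r^2)+2r^4f'(r^2)^2$ in the statement.

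The argument is essentially routine; there is no serious obstacle. The only two points that require care are the observation that the cross term does \emph{not} produce a genuinely new integral but collapses, via the Euler relation for the degree-$2$ tensor $H$, into the integral of $\sum H_{ik}^2$; and the bookkeeping of the two mismatched normalizations $\frac{1}{(n-1)(n+1)}$ and $\frac{1}{(n-1)(n+1)(n+3)}$ in Proposition \ref{prop:elementary_iden1}, together with the extra $\tfrac12$ on the $\delta_{pq}$ term there, which is precisely what dictates the factor $(n+3)$ attached to $f(r^2)^2$ and the different coefficients of $r^2 ff'$ and $r^4 f'^2$ in the two brackets.
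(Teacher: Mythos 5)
Your proposal is correct and follows essentially the same route as the paper: differentiate $\overline H_{ik}=f(|x'|^2)H_{ik}$ by the chain rule, invoke Euler's relation for the degree-$2$ homogeneous tensor $H$ to collapse the cross term into $\sum H_{ik}^2$, and feed the resulting two-term decomposition into the two integrals of Proposition \ref{prop:elementary_iden1}. The paper's proof is terser (it stops after the pointwise identity and says "the assertion follows"), whereas you have also spelled out the final bookkeeping of the $(n+3)$, $r^2$, and $\tfrac12$ normalizations, which is correct.
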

   \begin{proof}
   Since 
   \begin{align*}
   \d_l \overline H_{ik}(x)=f(|x'|^2)\d_l H_{ik}(x)+2x_l f'(|x'|^2)H_{ik}(x),
   \end{align*}
and by Euler's formula we obtain 
 \begin{align*}
 &\sum_{i,k,l=1}^{n-1}(\d_{l}\overline H_{ik}(x))^2\\
=&f(|x'|^2)^2\displaystyle{\sum_{i,k,l=1}^{n-1}}(\d_l H_{ik}(x))^2+4f(|x'|^2)f'(|x'|^2)\displaystyle{\sum_{i,k,l=1}^{n-1}}H_{ik}(x)x_l\d_l H_{ik}(x)\\
&+4|x'|^2 f'(|x'|^2)^2\sum_{i,k=1}^{n-1}H_{ik}(x)^{2}\\
=&f(|x'|^2)^2\displaystyle{\sum_{i,k,l=1}^{n-1}}(\d_l H_{ik}(x))^2+4\left[2f(|x'|^2)f'(|x'|^2)+|x'|^2 f'(|x'|^2)^2\right]\sum_{i,k=1}^{n-1}H_{ik}(x)^{2}.
 \end{align*}
 Hence, the assertion follows from Proposition \ref{prop:elementary_iden1}.
  \end{proof}
  \begin{corollary}\label{cal2}
  There holds
  \begin{align*}
   &\int_{\Sp_{r}^{n-2}(0)}\sum_{i,k,l=1}^{n-1}(\d_l \overline H_{ik})^2 (x)\\
    =&\frac{|\Sp^{n-2}|r^n}{(n-1)(n+1)}\sum_{i,j,k,l=1}^{n-1}(\overline W_{ijkl}+\overline W_{ilkj})^2 \left[(n+1)f(r^2)^2 +4r^2 f(r^2) f'(r^2)+2r^4f'(r^2)^2 \right].
  \end{align*}
   \end{corollary}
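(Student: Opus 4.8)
The plan is to deduce the corollary from Proposition \ref{cal1} by taking the trace, i.e.\ by setting $p=q$ and summing over $p$ from $1$ to $n-1$. On $\Sp_{r}^{n-2}(0)$ one has $|x'|^2=\sum_{p=1}^{n-1}x_p^2=r^2$, so after summation the left-hand side of Proposition \ref{cal1} becomes $r^2\int_{\Sp_{r}^{n-2}(0)}\sum_{i,k,l=1}^{n-1}(\d_l\overline H_{ik})^2$. For the right-hand side I would invoke the two elementary facts $\sum_{p=1}^{n-1}(\overline W_{ipkl}+\overline W_{ilkp})^2=\sum_{i,j,k,l=1}^{n-1}(\overline W_{ijkl}+\overline W_{ilkj})^2=:S$ (a relabelling of the summation index $p\mapsto j$) and $\sum_{p=1}^{n-1}\delta_{pp}=n-1$.

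Carrying out the summation, the coefficient of $f(r^2)^2$ becomes $2(n+3)+(n-1)(n+3)=(n+1)(n+3)$, the coefficient of $r^2f(r^2)f'(r^2)$ becomes $16+4(n-1)=4(n+3)$, and the coefficient of $r^4f'(r^2)^2$ becomes $8+2(n-1)=2(n+3)$; all three carry the common factor $(n+3)$, which cancels against the $(n+3)$ in the denominator $(n-1)(n+1)(n+3)$ coming from Proposition \ref{cal1}. Dividing both sides by $r^2$ then gives exactly
\begin{align*}
\int_{\Sp_{r}^{n-2}(0)}\sum_{i,k,l=1}^{n-1}(\d_l\overline H_{ik})^2=\frac{|\Sp^{n-2}|r^n}{(n-1)(n+1)}\,S\big[(n+1)f(r^2)^2+4r^2f(r^2)f'(r^2)+2r^4f'(r^2)^2\big].
\end{align*}
An equally short alternative is to integrate directly the pointwise expansion $\sum_{i,k,l}(\d_l\overline H_{ik})^2=f(|x'|^2)^2\sum_{i,k,l}(\d_lH_{ik})^2+4\big[2f(|x'|^2)f'(|x'|^2)+|x'|^2f'(|x'|^2)^2\big]\sum_{i,k}H_{ik}^2$ established inside the proof of Proposition \ref{cal1}, together with the traced versions $\int_{\Sp_{r}^{n-2}(0)}\sum_{i,k,l}(\d_lH_{ik})^2=\frac{|\Sp^{n-2}|r^n}{n-1}S$ and $\int_{\Sp_{r}^{n-2}(0)}\sum_{i,k}H_{ik}^2=\frac{|\Sp^{n-2}|r^{n+2}}{2(n-1)(n+1)}S$ obtained from Proposition \ref{prop:elementary_iden1}.

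There is no genuine obstacle here: once Proposition \ref{cal1} (equivalently Proposition \ref{prop:elementary_iden1}) is in hand, the statement is pure algebraic bookkeeping. The only place to exercise a little care is the index contraction $\sum_{p}(\overline W_{ipkl}+\overline W_{ilkp})^2=S$, which uses the assumed Weyl-type symmetries of $\overline W$, and the tracking of the numerical coefficients of $f^2$, $r^2ff'$ and $r^4(f')^2$ through the summation.
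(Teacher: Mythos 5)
Your proposal is correct, and both routes you describe are valid. The first route (set $p=q$ in Proposition \ref{cal1}, sum over $p$, use $\sum_p x_p^2=r^2$ on the sphere, and cancel the common factor $(n+3)$) is the natural way to obtain Corollary \ref{cal2} as an immediate consequence of Proposition \ref{cal1}, which is exactly how the paper presents it (the corollary is stated with no proof, directly after Proposition \ref{cal1}). One small remark: the identity $\sum_{p}(\overline W_{ipkl}+\overline W_{ilkp})^2=\sum_{j}(\overline W_{ijkl}+\overline W_{ilkj})^2$ (with the remaining indices $i,k,l$ also summed) is a pure relabelling of the dummy index $p\mapsto j$ and does not actually require any Weyl-type symmetry of $\overline W$; the rest of your coefficient bookkeeping checks out.
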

   
  \begin{proposition}\label{prop:F}
  There holds
  \begin{align*}
  &\mathcal{F}(0,\epsilon)\\
  =&-\frac{c_n |\Sp^{n-2}|\epsilon^{n-2}}{4(n-1)(n+1)}\sum_{i,j,k,l=1}^{n-1}(\overline W_{ijkl}+\overline W_{ilkj})^2\int_{0}^{\infty}\int_{0}^{\infty}(\epsilon^2+(t-T_c\epsilon)^2+r^2)^{2-n} \\
  &\cdot r^n\left[ (n+1)f(r^2)^2 +4r^2 f(r^2) f'(r^2)+2r^4f'(r^2)^2\right]\ud r \ud t.
  \end{align*}
  \end{proposition}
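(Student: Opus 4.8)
The plan is to evaluate $\mathcal{F}(0,\epsilon)$ by showing that at $\xi=0$ two of the three terms in the definition of $\mathcal{F}\ks$ vanish, and then reducing the surviving term to the claimed double integral by means of the spherical identity of Corollary~\ref{cal2}.

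\textbf{Step 1: the reduction.} First I would note that the last term dies at $\xi=0$: as recorded right after \eqref{linear approximation}, $w_{(0,\epsilon)}=0$ because $x^iH_{ij}(x)=0$, hence $z_{(0,\epsilon)}=\mu^{-1}w_{(0,\epsilon)}=0$ and $\int_{\Rn}\sum_{i,j=1}^{n-1}\overline H_{ij}\d_i\d_ju_{(0,\epsilon)}\,z_{(0,\epsilon)}=0$. Next, the first term also vanishes at $\xi=0$. Indeed, for $1\le i\le n-1$,
\[
\d_i u_{(0,\epsilon)}(x)=-(n-2)\,\epsilon^{\frac{n-2}{2}}\bigl(\epsilon^2+(x_n-T_c\epsilon)^2+|x'|^2\bigr)^{-\frac n2}x_i,
\]
so $\d_iu_{(0,\epsilon)}$ equals a scalar function of $(|x'|,x_n)$ times $x_i$; since $\overline H$ is symmetric and $x^i\overline H_{ij}(x)=f(|x'|^2)\,x^iH_{ij}(x)=0$, the integrand $\sum_{i,j,l}\overline H_{il}\overline H_{jl}\d_iu_{(0,\epsilon)}\d_ju_{(0,\epsilon)}$ is proportional to $\sum_{l}\bigl(\sum_i\overline H_{il}x_i\bigr)\bigl(\sum_j\overline H_{jl}x_j\bigr)\equiv0$. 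Therefore
\[
\mathcal{F}(0,\epsilon)=-\frac{c_n}{4}\int_{\Rn}\sum_{i,j,l=1}^{n-1}(\d_l\overline H_{ij})^2\,u_{(0,\epsilon)}^2.
\]

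\textbf{Step 2: the integral.} Both $\sum_{i,j,l}(\d_l\overline H_{ij})^2$ and $u_{(0,\epsilon)}^2$ depend on $x$ only through $|x'|$ and $x_n$; writing $x_n=t$ and decomposing $\int_{\R^{n-1}}\ud x'$ into integration over the spheres $\Sp^{n-2}_r(0)$ followed by $\int_0^\infty \ud r$, and using that on the slice $\{|x'|=r\}\times\{x_n=t\}$ one has $u_{(0,\epsilon)}^2=\epsilon^{n-2}(\epsilon^2+(t-T_c\epsilon)^2+r^2)^{2-n}$, I obtain
\[
\mathcal{F}(0,\epsilon)=-\frac{c_n\,\epsilon^{n-2}}{4}\int_{0}^{\infty}\!\!\int_{0}^{\infty}\Bigl(\int_{\Sp^{n-2}_r(0)}\sum_{i,j,l=1}^{n-1}(\d_l\overline H_{ij})^2\Bigr)(\epsilon^2+(t-T_c\epsilon)^2+r^2)^{2-n}\,\ud r\,\ud t.
\]
Substituting the value of the inner integral from Corollary~\ref{cal2} and pulling the constants $|\Sp^{n-2}|$, $(n-1)(n+1)$ and $\sum_{i,j,k,l=1}^{n-1}(\overline W_{ijkl}+\overline W_{ilkj})^2$ outside produces exactly the asserted formula.

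\textbf{Where care is needed.} There is no genuine obstacle here; the proposition simply consolidates the algebraic identities of Propositions~\ref{prop:elementary_iden1}--\ref{cal1} and Corollary~\ref{cal2}. The points to watch are: the constant $-(n-2)$ and the exponent $-n/2$ in $\d_iu_{(0,\epsilon)}$; the normalization of the surface measure used in Corollary~\ref{cal2}, so that the powers of $r$ come out exactly as $r^n$ after combining $r^{n-2}$ from polar coordinates with the sphere identity; and the Fubini interchange of the $r$- and $t$-integrations, which is legitimate because $d<(n-6)/4$ makes the weight $r^n[(n+1)f(r^2)^2+4r^2f(r^2)f'(r^2)+2r^4f'(r^2)^2]$ grow slowly enough to be absolutely integrable against $(\epsilon^2+(t-T_c\epsilon)^2+r^2)^{2-n}$ on $\{r,t>0\}$.
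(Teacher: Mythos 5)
Your proof is correct and follows essentially the same approach as the paper: show that the third term vanishes because $z_{(0,\epsilon)}=0$, that the first term vanishes, and then reduce the surviving term via polar coordinates and Corollary~\ref{cal2}. Your argument for the first term is in fact slightly sharper than the paper's (which appeals to ``by symmetry'' of the spherical integral): you observe that the integrand itself vanishes pointwise, since $\partial_i u_{(0,\epsilon)}$ is a radial function times $x_i$ and $x^i\overline H_{il}(x)=0$, but both routes are routine and reach the same conclusion.
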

  \begin{proof}
Since $\sum_{i,j=1}^{n-1}\bar H_{ij}(x)\d_i \d_j u_{(0,\epsilon)}(x)=0$,
 then $z_{(0,\epsilon)}=0$, and by symmetry we have 
  \begin{align*}
 \int_{\Sp_{r}^{n-2}(0)}\sum_{i,k,l=1}^{n-1}\overline H_{il}(x)\overline H_{kl}(x)\d_i u_{(0,\epsilon)}(x)\d_k u_{(0,\epsilon)}(x)=0.
  \end{align*}
Then we have 
  \begin{align*}
 \mathcal{F}(0,\epsilon)=&-\frac{c_n}{4}\int_{\Rn}(\d_l \overline H_{ik})^2(x)u_{(0,\epsilon)}^2 (x)\\
  &=-\frac{c_n}{4}\int_{0}^{\infty}\int_{0}^{\infty}\int_{\Sp_{r}^{n-2}(0)}(\d_l \overline H_{ik})^2(x) u_{(0,\epsilon)}^{2}(x) \ud\sigma_r (x) \ud r \ud x_n . 
  \end{align*}
Hence, the result follows from Corollary \ref{cal2}.
  \end{proof}
 By Proposition \ref{prop:F}, we rewrite 
 \begin{align*}
 \mathcal{F}(0,\epsilon)=&-\frac{(n-2) |\Sp^{n-2}|}{16(n-1)^2(n+1)}(\overline W_{ijkl}+\overline W_{ilkj})^2\\
 &\cdot\sum_{q=0}^{2d}\alpha_q \int_{0}^{\infty}\int_{0}^{\infty}r^{2q+n}\epsilon^{n-2}(\epsilon^2 +(t-T_c \epsilon)^2 +r^2)^{2-n} \ud r\ud t,
 \end{align*}
 where $\alpha_q $ are constants defined by
 \begin{align}\label{eq:alpha}
\sum_{q=0}^{2d}\alpha_q s^q:=(n+1)f(s)^2 +4s f(s)f'(s)+2s^2 f'(s)^2 .
 \end{align}
 Then we obtain
 \begin{align*}
 \mathcal{F}(0,\epsilon)=&-\frac{(n-2) |\Sp^{n-2}|}{16(n-1)^2(n+1)}(\overline W_{ijkl}+\overline W_{ilkj})^2\\
 &\cdot \sum_{q=0}^{2d}\alpha_q \epsilon^{2q+4}\int_{0}^{\infty}\int_{0}^{\infty}\frac{r^{2q+n}}{(1+(t-T_c)^2 +r^2)^{n-2}}\ud r\ud t,\\
=&-\frac{(n-2) |\Sp^{n-2}|}{16(n-1)^2(n+1)}(\overline W_{ijkl}+\overline W_{ilkj})^2\\
 &\cdot \sum_{q=0}^{2d}\alpha_q \epsilon^{2q+4}\int_{0}^{\infty}\frac{1}{(1+(t-T_c)^2)^{\frac{n-5-2q}{2}}}\ud t\int_{0}^{\infty}\frac{r^{2q+n}}{(1+r^2)^{n-2}} \ud r\\
 =&-\frac{(n-2) |\Sp^{n-2}|}{16(n-1)^2(n+1)}(\overline W_{ijkl}+\overline W_{ilkj})^2\sum_{q=0}^{2d}\alpha_q c_q \epsilon^{2q+4}  B(\frac{2q+n+1}{2},\frac{n-5-2q}{2}).
 \end{align*}
 where
 \begin{equation}\label{def:c_q}
 c_q=\int_{0}^{\infty}(1+(t-T_c)^2)^{\frac{5+2q-n}{2}}\ud t, \quad q \in \mathbb{N} \mathrm{~~and~~} 0 \leq q \leq 2d.
 \end{equation}
 For clarity, we rewrite 
 \begin{align}\label{eq:F(0,epsilon)}
 \mathcal{F}(0,\epsilon)=-\frac{(n-2) |\Sp^{n-2}|}{32(n-1)^2(n+1)}B(\frac{n-1}{2},\frac{n-5}{2})(\overline W_{ijkl}+\overline W_{ilkj})^2I(\epsilon^2),
 \end{align}
 where
 \begin{align}\label{def:I}
 I(s)=\sum_{q=0}^{2d}\left(c_q \alpha_q s^{q+2} \prod_{j=0}^{q}\frac{n-1+2j}{n-5-2j}\right).
 \end{align}
 \begin{proposition}\label{prop:d_xi^2 F}
There holds
\begin{align*}
 &\frac{\d^2}{\d\xi_p\d \xi_q}F(0,\epsilon)\\
 =&-\frac{2(n-2)^2|\Sp^{n-2}|}{(n-1)(n+1)(n+3)}(\overline W_{ipkl}+\overline W_{ilkp})(\overline W_{iqkl}+\overline W_{ilkq})\\
 &\cdot\int_{0}^{\infty}\int_{0}^{\infty}\frac{\epsilon^{n-2}}{(\epsilon^2 +(t-T_c \epsilon)^2 +r^2)^n}r^{n+4}(2f(r^2)f'(r^2)+r^2 f'(r^2)^2)\ud r\ud t\\
 &-\frac{(n-2)^2|\Sp^{n-2}|}{2(n-1)(n+1)(n+3)}(\overline W_{ijkl}+\overline W_{ilkj})^2 \delta_{pq}\\
 &\cdot \int_{0}^{\infty}\int_{0}^{\infty}\frac{\epsilon^{n-2}}{(\epsilon^2 +(t-T_c \epsilon)^2 +r^2)^n}r^{n+4}(2f(r^2)f'(r^2)+r^2 f'(r^2)^2)\ud r\ud t\\
 &+\frac{(n-2)^2|\Sp^{n-2}|}{4(n-1)^2 (n+1)}(\overline W_{ijkl}+\overline W_{ilkj})^2 \delta_{pq}\\
&\cdot \int_{0}^{\infty}\int_{0}^{\infty}\frac{\epsilon^{n-2}}{(\epsilon^2 +(t-T_c \epsilon)^2 +r^2)^{n-1}}r^{n+4}f'(r^2)^2 \ud r\ud t.
\end{align*}
 \end{proposition}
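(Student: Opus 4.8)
The strategy is to differentiate $\mathcal F\ks$ term by term. Write $\mathcal F\ks=A_1\ks+A_2\ks+A_3\ks$, where
\begin{align*}
A_1\ks&=\frac12\int_{\Rn}\sum_{i,j,l=1}^{n-1}\overline H_{il}\overline H_{jl}\,\d_iu_{\ks}\,\d_ju_{\ks},\\
A_2\ks&=-\frac{c_n}{4}\int_{\Rn}\sum_{i,j,l=1}^{n-1}(\d_l\overline H_{ij})^2\,u_{\ks}^2,\\
A_3\ks&=\int_{\Rn}\sum_{i,j=1}^{n-1}\overline H_{ij}\,\d_i\d_ju_{\ks}\,z_{\ks}.
\end{align*}
Each is twice continuously differentiable in $\ks$: for the $u_{\ks}$-dependence this is explicit, and for $z_{\ks}=\mu^{-1}w_{\ks}$ it follows from Proposition~\ref{prop:est_w} and from differentiating the equation defining $w_{\ks}$. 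Since $u_{\ks}(x)=u_{(0,\epsilon)}(x-(\xi,0))$, one has $\d_{\xi_p}u_{\ks}=-\d_{x_p}u_{\ks}$ for $1\le p\le n-1$; equivalently, translating the integration variable recenters $u$ at the origin and moves the argument of $\overline H$. I will compute $\d_{\xi_p}\d_{\xi_q}A_k(0,\epsilon)$ for $k=1,2,3$ separately and add.

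First, $A_3$ contributes nothing. Since $\overline H$ is trace-free and satisfies $x^a\overline H_{ab}(x)=0$, and since $\d_i\d_ju_{(0,\epsilon)}$ (resp.\ $\d_p\d_i\d_ju_{(0,\epsilon)}$) is, for tangential indices, a linear combination with coefficients depending only on $|x'|$ and $x_n$ of $\delta_{ij},x_ix_j$ (resp.\ of $\delta_{ij}x_p,\delta_{ip}x_j,\delta_{jp}x_i,x_ix_jx_p$), we get $\sum_{i,j}\overline H_{ij}\d_i\d_ju_{(0,\epsilon)}=0$ and $\sum_{i,j}\overline H_{ij}\d_i\d_j\d_pu_{(0,\epsilon)}=0$. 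The first identity and uniqueness in Proposition~\ref{prop:sols_linearization_eq} force $z_{(0,\epsilon)}=0$; differentiating the equation for $z_{\ks}$ at $\xi=0$ (treating the $\xi$-dependence of the constraint space $\mathcal E_{\ks}$ via Lagrange multipliers, exactly as in the proof of Proposition~\ref{prop:v_critical_pt_F}, and using the second identity for the resulting right-hand side) then forces $\d_{\xi_p}z_{(0,\epsilon)}=0$. Consequently every summand of $\d_{\xi_p}\d_{\xi_q}\big(\sum_{i,j}\overline H_{ij}\d_i\d_ju_{\ks}\,z_{\ks}\big)\big|_{\xi=0}$ carries a vanishing factor, so $\d_{\xi_p}\d_{\xi_q}A_3(0,\epsilon)=0$.

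Next, $A_1$ and $A_2$. For $A_1$, differentiate twice; at $\xi=0$ every term in which $\overline H_{il}$ or $\overline H_{jl}$ is undifferentiated vanishes because $\sum_ix_i\overline H_{il}=0$, and the surviving ones are treated with its consequence $\sum_ix_i\d_p\overline H_{il}=-\overline H_{pl}$ (differentiate $\sum_ix_i\overline H_{il}=0$). One finds
\[
\d_{\xi_p}\d_{\xi_q}A_1(0,\epsilon)=(n-2)^2\epsilon^{n-2}\int_{\Rn}\sum_{l=1}^{n-1}\overline H_{pl}\overline H_{ql}\,\varrho^{-n},\qquad \varrho:=\epsilon^2+(x_n-T_c\epsilon)^2+|x'|^2.
\]
For $A_2$, $\d_{\xi_p}(u_{\ks}^2)=-\d_{x_p}(u_{\ks}^2)$, and the relevant boundary terms vanish (on $\d\Rn$ since $p,q\le n-1$, at infinity since $d<(n-6)/4$), so two integrations by parts give
\[
\d_{\xi_p}\d_{\xi_q}A_2(0,\epsilon)=-\frac{c_n}{4}\int_{\Rn}\sum_{i,j,l=1}^{n-1}(\d_l\overline H_{ij})^2\,\d_p\d_q(u_{(0,\epsilon)}^2),
\]
where $\d_p\d_q(u_{(0,\epsilon)}^2)=-2(n-2)\epsilon^{n-2}\delta_{pq}\varrho^{1-n}+4(n-1)(n-2)\epsilon^{n-2}x_px_q\,\varrho^{-n}$.

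Finally, the assembly. Pass to polar coordinates $x'=r\omega$, $x_n=t$. The angular integral of the $\varrho^{-n}$ part of $A_2$ is supplied by Proposition~\ref{cal1}, that of the $\varrho^{1-n}$ part by Corollary~\ref{cal2} (both via $\d_l\overline H_{ik}(x)=f(|x'|^2)\,\d_lH_{ik}(x)+2x_lf'(|x'|^2)\,H_{ik}(x)$ and Euler's identity $x_l\d_lH_{ik}=2H_{ik}$), and that of $A_1$ by the same kind of fourth-moment evaluation as in Proposition~\ref{prop:elementary_iden1}, applied now to $\int_{\Sp^{n-2}_r}\sum_l H_{pl}H_{ql}$. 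In the $\varrho^{1-n}$ channel, integrate by parts in $r$: using $\d_r(r^{n+1}\varrho^{1-n})=(n+1)r^n\varrho^{1-n}-2(n-1)r^{n+2}\varrho^{-n}$ and $\d_r f(r^2)^2=4rf(r^2)f'(r^2)$ one turns $\int\!\!\int r^n\varrho^{1-n}[(n+1)f(r^2)^2+4r^2f(r^2)f'(r^2)]\,dr\,dt$ into $2(n-1)\int\!\!\int r^{n+2}\varrho^{-n}f(r^2)^2\,dr\,dt$. What survives in the $\varrho^{1-n}$ channel is then exactly $\int\!\!\int r^{n+4}\varrho^{1-n}f'(r^2)^2$, i.e.\ the third displayed term; the $f^2$-contribution it shed, together with those coming from $A_1$ and from the $\varrho^{-n}$ part of $A_2$, cancels — this cancellation being precisely the Weyl-contraction identity for $\int_{\Sp^{n-2}_r}\sum_l H_{pl}H_{ql}$ — and collecting what remains in the $\varrho^{-n}$ channel yields the first two displayed terms. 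The principal obstacle is this last bookkeeping: organizing the contractions of $\overline W$ by the first Bianchi identity and the pair symmetries so that the $f^2$-terms cancel and the constants collapse to those in the statement; a secondary delicate point is the differentiation at $\xi=0$ of the $\mathcal E_{\ks}$-constrained equation for $z_{\ks}$.
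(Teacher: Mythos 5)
Your proposal is correct and follows essentially the same route as the paper: derive the intermediate formula
\begin{align*}
\frac{\d^2}{\d\xi_p\d\xi_q}\mathcal F(0,\epsilon)
=(n-2)^2\!\int_{\Rn}\!\frac{\epsilon^{n-2}\overline H_{pl}\overline H_{ql}}{\varrho^n}
-\frac{(n-2)^2}{4}\!\int_{\Rn}\!\frac{\epsilon^{n-2}(\d_l\overline H_{ik})^2x^px^q}{\varrho^n}
+\frac{(n-2)^2}{8(n-1)}\!\int_{\Rn}\!\frac{\epsilon^{n-2}(\d_l\overline H_{ik})^2\delta_{pq}}{\varrho^{n-1}},
\end{align*}
(which the paper obtains by citing \cite[Proposition 21]{Brendle3} and saying ``similarly,'' and which you rederive by splitting $\mathcal F=A_1+A_2+A_3$, killing $A_3$ via $z_{(0,\epsilon)}=0$ and $\d_\xi z_{(0,\epsilon)}=0$, and computing $A_1,A_2$ directly), then insert Proposition~\ref{cal1} and Corollary~\ref{cal2} together with the fourth-moment evaluation of $\int_{\Sp^{n-2}_r}\sum_l\overline H_{pl}\overline H_{ql}$ and the radial integration by parts. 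Two small remarks: the identity $\d_{\xi_p}\d_{\xi_q}A_2(0,\epsilon)=-\tfrac{c_n}{4}\int(\d_l\overline H_{ij})^2\,\d_p\d_q(u_{(0,\epsilon)}^2)$ is just differentiation under the integral (no integration by parts is needed there), and you do not actually need $\d_{\xi_p}z_{(0,\epsilon)}=0$ for the cross terms in $A_3$, since $\sum_{i,j}\overline H_{ij}\,\d_p\d_i\d_j u_{(0,\epsilon)}=0$ already kills them — though your claim $\d_{\xi_p}z_{(0,\epsilon)}=0$ is itself correct (the multipliers $\bar b_a(0,\epsilon)$ vanish and uniqueness in Proposition~\ref{prop:sols_linearization_eq} applies).
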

 \begin{proof}
 As in \cite[Proposition 21]{Brendle3}, similarly we obtain
 \begin{align*}
 &\frac{\d^2}{\d\xi_p\d \xi_q}\mathcal F(0,\epsilon)\\
 =&(n-2)^2 \int_{\Rn}\frac{\epsilon^{n-2}}{(\epsilon^2 +(x_n-T_c \epsilon)^2 +|x'|^2)^n}\overline H_{pl}(x)\overline H_{ql}(x)\\
 &-\frac{(n-2)^2}{4}\int_{\Rn}\frac{\epsilon^{n-2}}{(\epsilon^2 +(x_n-T_c \epsilon)^2 +|x'|^2)^n}(\d_l \overline H_{ik}(x))^2 x^p x^q \\
 &+\frac{(n-2)^2}{8(n-1)}\int_{\Rn}\frac{\epsilon^{n-2}}{(\epsilon^2 +(x_n-T_c \epsilon)^2 +|x'|^2)^{n-1}}(\d_l \overline H_{ik}(x))^2 \delta_{pq}.
 \end{align*}
 This together with Proposition \ref{cal1} and Corollary \ref{cal2} gives the desired assertion.
 \end{proof}
 For brevity, we let
 \begin{align}\label{def:beta_q}
 2f(s) f'(s) +s f'(s)^2:=\sum_{q=0}^{2d-1}\beta_q s^q.
 \end{align}
By definition \eqref{def:c_q} of $c_q$, a direct computation yields
 \begin{align*}
 &\int_{0}^{\infty}\int_{0}^{\infty}\frac{\epsilon^{n-2}}{(\epsilon^2 +(t-T_c \epsilon)^2 +r^2)^n}r^{n+4}(2f(r^2)f'(r^2)+r^2 f'(r^2)^2)\ud r \ud t\\
 :=&\frac{1}{2}B(\frac{n+3}{2},\frac{n-3}{2})J(\epsilon^2),
 \end{align*}
 where
 \begin{align}\label{def:J}
J(s)=&\sum_{q=0}^{2d-1}\beta_q c_q s^{q+2} \frac{B(\frac{n+2q+5}{2},\frac{n-2q-5}{2})}{B(\frac{n+3}{2},\frac{n-3}{2})}\no\\
=&\sum_{q=0}^{2d-1}\left(c_q\beta_q s^{q+2}\prod_{j=0}^{q}\frac{n+3+2j}{n-5-2j}\right).
 \end{align}
 
In order to show that $\mathcal{F}(\xi,\e)$ has a strict local minimum at $(0,1)$, By \eqref{eq:mixed_d^2F},\eqref{eq:F(0,epsilon)} and Proposition \ref{prop:d_xi^2 F}, our strategy is to find some polynomials $f(s):=\sum_{i=0}^d a_i s^i, a_i \in \mathbb{R}$ for $1 \leq i \leq d$, such that $I(1)>0, I'(1)=0, I''(1)<0$ and $J(1)<0$. 

Before proceeding to find such polynomials $f$, we first need the following elementary result.
\begin{lemma}\label{numesti1}
Let $T_c<0, n \geq 25$ and $c_q$ be defined in \eqref{def:c_q}, there holds
$$(1+T_c^2)\frac{n-2q-7}{n-2q-8}\leq \frac{c_{q+1}}{c_q}\leq \frac{n-2q-6}{n-2q-8}(1+T_c^2)$$
for $0\leq q \leq 2d$. In particular,
\begin{align*}
\frac{(n-6)(n-10)}{(n-8)(n-9)}\geq \frac{c_{1}^{2}}{c_0 c_2}\geq \frac{(n-10)(n-7)}{(n-8)^2}.
\end{align*}
\end{lemma}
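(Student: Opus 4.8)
The plan is to reduce the entire statement to a single one-step recursion for the numbers $c_q$ together with a sharp two-sided bound for one $c_q$ at a time. First I would pass to the shifted variable $s=t-T_c$ and write $a:=-T_c>0$, $m_q:=\frac{n-5-2q}{2}$, so that $c_q=\int_a^\infty(1+s^2)^{-m_q}\,\ud s$. Since the statement also involves $c_{q+1}$, both $c_q$ and $c_{q+1}$ are finite, which forces $m_q=m_{q+1}+1>\tfrac32$, equivalently $n-2q-8>0$; this is precisely what is needed to make every integral below converge and to annihilate every boundary term produced by an integration by parts.

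Next I would derive the recursion. Integrating the elementary identity $\frac{\ud}{\ud s}\left[\frac{s}{(1+s^2)^{m_q-1}}\right]=(2m_q-2)(1+s^2)^{-m_q}-(2m_q-3)(1+s^2)^{-(m_q-1)}$ over $[a,\infty)$ and using $2m_q-2=n-7-2q$, $2m_q-3=n-8-2q$, $\int_a^\infty(1+s^2)^{-(m_q-1)}\,\ud s=c_{q+1}$ gives
\[
(n-8-2q)\,c_{q+1}=(n-7-2q)\,c_q+\frac{a}{(1+a^2)^{m_q-1}}.
\]
Since $1+a^2=1+T_c^2$ and $n-8-2q>0$, the lemma is then equivalent to the sandwich $a^2(n-7-2q)\,c_q\le\frac{a}{(1+a^2)^{m_q-1}}\le(1+(n-6-2q)a^2)\,c_q$.

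To get the lower end of this sandwich (hence the lower bound for $c_{q+1}/c_q$) I would use $\tfrac1s\le\tfrac1a$ on $[a,\infty)$ to obtain $c_q\le\tfrac1a\int_a^\infty s(1+s^2)^{-m_q}\,\ud s=\frac{(1+a^2)^{-(m_q-1)}}{a(n-7-2q)}$, which rearranges to exactly the desired inequality. For the upper end I would integrate by parts: from $(1+s^2)^{-m_q}=-\frac{1}{(n-7-2q)\,s}\,\frac{\ud}{\ud s}(1+s^2)^{-(m_q-1)}$ one finds $c_q=\frac{1}{n-7-2q}\left[\frac{(1+a^2)^{-(m_q-1)}}{a}-\int_a^\infty\frac{(1+s^2)^{-(m_q-1)}}{s^2}\,\ud s\right]$, and then the pointwise estimate $\frac1{s^2}\le\frac{1+a^2}{a^2}\cdot\frac1{1+s^2}$ valid for $s\ge a$ bounds the remaining integral by $\frac{1+a^2}{a^2}c_q$; solving the resulting inequality for $c_q$ yields $c_q\ge\frac{a(1+a^2)^{-(m_q-1)}}{1+(n-6-2q)a^2}$, which is the other half of the sandwich. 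Feeding both bounds into the recursion gives $(1+T_c^2)\frac{n-2q-7}{n-2q-8}\le\frac{c_{q+1}}{c_q}\le(1+T_c^2)\frac{n-2q-6}{n-2q-8}$, as claimed.

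Finally, the ``in particular'' statement follows by writing $\frac{c_1^2}{c_0c_2}=\frac{c_1/c_0}{c_2/c_1}$ and invoking the bound just proved for $q=0$ and $q=1$: the factors $1+T_c^2$ cancel and $n\ge25$ keeps all denominators positive, producing exactly $\frac{(n-6)(n-10)}{(n-8)(n-9)}\ge\frac{c_1^2}{c_0c_2}\ge\frac{(n-7)(n-10)}{(n-8)^2}$. The only step where real care is needed is the lower bound for $c_q$: the sharp constant $1+(n-6-2q)a^2$ comes entirely from the pointwise inequality $\frac1{s^2}\le\frac{1+a^2}{a^2(1+s^2)}$, and any cruder handling of $\int_a^\infty\frac{(1+s^2)^{-(m_q-1)}}{s^2}\,\ud s$ would lose the precise upper bound on $c_{q+1}/c_q$.
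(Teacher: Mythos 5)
Your proof is correct. It shares the starting point with the paper's argument---both pass to the variable $s=t-T_c$ with $a=-T_c>0$ and derive the same one-step integration-by-parts recursion relating $c_{q+1}$ to $c_q$ plus the boundary term $a(1+a^2)^{-(m_q-1)}$---but from there the routes diverge. The paper iterates that recursion to an infinite series representation of $c_q$ (written there as $I_\alpha(a)$), verifies the vanishing of the remainder after $k$ steps, and then bounds $c_{q+1}/c_q$ by comparing the resulting series term by term. You instead close the argument in finitely many steps: the recursion isolates a single boundary term, and you sandwich that term between $a^2(n-7-2q)c_q$ and $(1+(n-6-2q)a^2)c_q$ directly, using the pointwise comparisons $1/s\le 1/a$ and $1/s^2\le\frac{1+a^2}{a^2}\cdot\frac{1}{1+s^2}$ on $[a,\infty)$ together with one further integration by parts. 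This avoids the series entirely, sidesteps the convergence/remainder check the paper needs to justify its iteration, and makes the origin of the two constants $\frac{n-2q-7}{n-2q-8}$ and $\frac{n-2q-6}{n-2q-8}$ completely explicit; the hypothesis $T_c<0$ (so $a>0$) is used in exactly the same way in both proofs, though more transparently in yours. The deduction of the bound on $c_1^2/(c_0c_2)$ from the $q=0$ and $q=1$ cases is identical to the paper's. There is no gap.
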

\begin{proof}
Let $a=-T_c>0$ and define
$$I_\alpha(a):=\int_a^\infty(1+r^2)^{-\alpha} dr \quad \mathrm{~~for~~} \alpha >\frac{1}{2}.$$
An integration by parts gives
\begin{align}\label{def:I_alpha}
I_\alpha(a)=\frac{2\alpha}{2\alpha-1}I_{\alpha+1}(a)+\frac{1}{2\alpha-1}(1+a^2)^{-\alpha} a.
\end{align}
Notice that
\begin{align*}
\lim_{k \to \infty}\prod_{i=0}^{k-1}\frac{2\alpha+2i}{2\alpha+2i-1}I_{\alpha+k}(a)\leq&\frac{\pi}{2} \lim_{k \to \infty}\prod_{i=0}^{k-1}\frac{2\alpha+2i}{2\alpha+2i-1}(1+a^2)^{-\alpha-k+1}\\
=&\frac{\pi}{2} \lim_{k \to \infty}\frac{\displaystyle \prod_{i=0}^{k-1}\left(1+\frac{1}{2\alpha+2i-1}\right)}{(1+a^2)^{\alpha-(k-1)}}=0.
\end{align*}
From this, we iterate \eqref{def:I_alpha} to obtain
\begin{align*}
I_\alpha(a)=a\sum_{k=0}^{\infty}\frac{1}{2\alpha+2k-1}\prod_{i=0}^{k-1}\frac{2\alpha+2i}{2\alpha+2i-1}(1+a^2)^{-(\alpha+k)}.
\end{align*}
Then we have 
\begin{align*}
&\frac{I_\alpha(a)}{(1+a^2)I_{\alpha+1}(a)}\\
=&\frac{\displaystyle{\sum_{k=0}^{\infty}}\frac{1}{2\alpha+2k-1}\displaystyle{\prod_{i=0}^{k-1}}\frac{2\alpha+2i}{2\alpha+2i-1}(1+a^2)^{-(\alpha+k)}}{\displaystyle{\sum_{k=0}^{\infty}}\frac{1}{2\alpha+2k+1}\displaystyle{\prod_{i=0}^{k-1}}\frac{2\alpha+2i+2}{2\alpha+2i+1}(1+a^2)^{-(\alpha+k)}} .
\end{align*}
Since
\begin{align*}
\frac{\displaystyle{\frac{1}{2\alpha+2k-1}}\displaystyle{\prod_{i=0}^{k-1}}\frac{2\alpha+2i}{2\alpha+2i-1}}{\displaystyle{\frac{1}{2\alpha+2k+1}}\displaystyle{\prod_{i=0}^{k-1}}\frac{2\alpha+2i+2}{2\alpha+2i+1}}=\frac{2\alpha(2\alpha+2k+1)}{(2\alpha-1)(2\alpha+2k)},
\end{align*}
then 
\begin{align*}
\frac{2\alpha}{2\alpha-1}\leq \frac{I_\alpha(a)}{(1+a^2)I_{\alpha+1}(a)}\leq \frac{2\alpha+1}{2\alpha-1}.
\end{align*}
In particular, it follows from \eqref{def:c_q} and the above inequality that for $0 \leq q \leq 2d$,
$$c_q=I_{\frac{n-2q-5}{2}}(a)  \quad \mathrm{and}$$
$$(1+T_c^2)\frac{n-2q-7}{n-2q-8}\leq \frac{c_{q+1}}{c_q}\leq \frac{n-2q-6}{n-2q-8}(1+T_c^2).$$
And the remained estimates follow from the above estimate.
\end{proof}

Now we choose $d=1$ and let $f(s)=a_0+a_1 s$. Then by \eqref{eq:alpha} we obtain
 \begin{align*}
 \alpha_0 =(n+1)a_{0}^{2},\quad \alpha_1 =2(n+3)a_0 a_1, \quad \alpha_{2}=(n+7)a_{1}^{2}.
 \end{align*}
Differentiating \eqref{def:I} with respect to $s$, we obtain
\begin{align*}
&I'(s)=\sum_{q=0}^{2}\left[(q+2)c_q \alpha_q s^{q+1}\prod_{j=0}^{q}\frac{n-1+2j}{n-5-2j} \right]\\
=&\frac{2 c_0 \alpha_0 (n-1)}{n-5}s+\frac{3c_1\alpha_1(n-1)(n+1)}{(n-5)(n-7)}s^2 +\frac{4 c_2 \alpha_2 (n-1)(n+1)(n+3)}{(n-5)(n-7)(n-9)}s^3\\
&=\frac{2(n+1)(n-1)}{n-5}\left[c_0 a_{0}^{2}s+\frac{3(n+3)}{n-7}c_1 a_0 a_1 s^2 +\frac{2(n+3)(n+7)}{(n-7)(n-9)}c_2 a_{1}^{2}s^3  \right].
\end{align*}
We set $a_1=-1$ and define
\begin{align*}
p_n(a_0):=c_0 a_{0}^{2} -\frac{3(n+3)}{n-7}c_1 a_0 +\frac{2(n+3)(n+7)}{(n-7)(n-9)}c_2,
\end{align*}
then
\begin{align*}
I'(1)=\frac{2(n+1)(n-1)}{n-5}p_n(a_0).
\end{align*}
Notice that the discriminant $d(p_n)$ of $p_n$ is given by 
\begin{align*}
d(p_n)=\frac{(n+3)^2}{(n-7)^2}\left[ 9c_{1}^{2}-8\frac{(n+7)(n-7)}{(n+3)(n-9)}c_0 c_2\right].
\end{align*}

By Lemma \ref{numesti1}, we have 
\begin{align*}
d(p_n)\geq\frac{(n+3)^2}{(n-7)^2}c_{1}^{2} \left[ 9-8\frac{(n+7)(n-8)^2}{(n+3)(n-9)(n-10)}\right].
\end{align*}
Define\begin{align*}
q(n) = 9(n+3)(n-9)(n-10)-8(n-8)^2(n+7),
\end{align*}
then 
\begin{align*}
q'(n)=3n^2 -144n+681.
\end{align*}
Notice that $q'(x)>0$ for $ x \geq 62$ and $q(62)>0, q(61)<0$, then $d(p_n)>0$ for $n\geq 62$.

Hence,  we can choose
\begin{align}\label{def:a_0}
a_0=\frac{(n+3)c_1}{2(n-7)c_0}\left[3+\sqrt{9-8\frac{(n+7)(n-7)}{(n+3)(n-9)}\frac{c_0 c_2}{c_{1}^{2}}}\right]
\end{align}
such that $I'(1)=0$. From this and Lemma \ref{numesti1}, we obtain
\begin{align*}
I(1)=&\frac{1}{3}\left[c_0a_0^2-\frac{(n+3)(n+7)}{(n-7)(n-9)}c_2\right]\\
\geq&\frac{c_2}{3}\frac{n+3}{n-7}\left[\frac{9(n+3)}{4(n-7)}\frac{c_1^2}{c_0c_2}-\frac{n+7}{n-9}\right]\\
\geq&\frac{c_2}{3}\frac{n+3}{n-7}\left[\frac{9(n+3)(n-10)}{4(n-8)^2}-\frac{n+7}{n-9}\right]>0
\end{align*}
for $n \geq 62$.

\begin{lemma}\label{numesti2}
There holds $I''(1)<0$ for $n\geq 62$.
\end{lemma}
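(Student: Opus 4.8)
The plan is to reduce $I''(1)<0$, after one explicit differentiation and one use of the constraint $I'(1)=0$, to the polynomial inequality $q(n)>0$ that has already been verified above for $n\geq 62$.

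First I would differentiate the formula for $I'(s)$ once more. From
$$I'(s)=\frac{2(n+1)(n-1)}{n-5}\left[c_0 a_0^2 s+\frac{3(n+3)}{n-7}c_1 a_0 a_1 s^2+\frac{2(n+3)(n+7)}{(n-7)(n-9)}c_2 a_1^2 s^3\right]$$
one obtains, evaluating at $s=1$ and recalling $a_1=-1$,
$$I''(1)=\frac{2(n+1)(n-1)}{n-5}\left[c_0 a_0^2-\frac{6(n+3)}{n-7}c_1 a_0+\frac{6(n+3)(n+7)}{(n-7)(n-9)}c_2\right].$$
Since $a_0$ was chosen precisely so that $p_n(a_0)=0$, that is $c_0 a_0^2=\frac{3(n+3)}{n-7}c_1 a_0-\frac{2(n+3)(n+7)}{(n-7)(n-9)}c_2$, substituting this identity collapses the bracket and gives
$$I''(1)=\frac{2(n-1)(n+1)(n+3)}{(n-5)(n-7)}\left[-3c_1 a_0+\frac{4(n+7)}{n-9}c_2\right].$$

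The prefactor $\frac{2(n-1)(n+1)(n+3)}{(n-5)(n-7)}$ is positive for $n\geq 62$, so the claim is equivalent to $3c_1 a_0>\frac{4(n+7)}{n-9}c_2$. Here I would invoke the explicit choice \eqref{def:a_0} of $a_0$: since the square root appearing there is a nonnegative real number (which is exactly the statement $d(p_n)>0$ established above), one has the clean lower bound $a_0\geq\frac{3(n+3)c_1}{2(n-7)c_0}$. Plugging this in, it suffices to prove
$$\frac{9(n+3)(n-9)}{8(n-7)(n+7)}\cdot\frac{c_1^2}{c_0 c_2}>1.$$
Now Lemma \ref{numesti1} furnishes $\frac{c_1^2}{c_0 c_2}\geq\frac{(n-10)(n-7)}{(n-8)^2}$, so it is enough that
$$\frac{9(n+3)(n-9)(n-10)}{8(n+7)(n-8)^2}>1,$$
which is precisely $q(n)>0$, already verified for $n\geq 62$.

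Essentially everything here is bookkeeping; the one point requiring care is the sign tracking when substituting $p_n(a_0)=0$ and when dividing through by the quantities $c_0,c_1,c_2$ (positive, being the integrals $I_\alpha(-T_c)$ from the proof of Lemma \ref{numesti1}) and by the factors $n-7,n-9,n-10$ (all positive for $n\geq 62$). No genuinely difficult step is anticipated, and in particular no estimate beyond Lemma \ref{numesti1} is needed.
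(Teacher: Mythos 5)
Your argument is correct and follows essentially the same route as the paper: both compute $I''(1)$, use $I'(1)=0$ (equivalently $p_n(a_0)=0$) to collapse the bracket, invoke the lower bound $a_0\geq\frac{3(n+3)c_1}{2(n-7)c_0}$ coming from the nonnegativity of the square root in \eqref{def:a_0}, and thereby reduce the claim to $d(p_n)>0$, which the paper has already shown via $q(n)>0$ and Lemma~\ref{numesti1}. The only cosmetic difference is that you unwind the reduction all the way back to $q(n)>0$, whereas the paper stops at the identity $I''(1)\leq -\frac{(n-1)(n+1)}{(n-5)c_0}d(p_n)$ and cites the already established positivity of $d(p_n)$.
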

\begin{proof}
By definition \eqref{def:I} of $I$, we have 
\begin{align*}
I''(1)=\frac{2(n+1)(n-1)}{n-5}\left(c_0 a_{0}^{2}-\frac{6(n+3)}{n-7}c_1 a_0 +\frac{6(n+3)(n+7)}{(n-7)(n-9)}c_2 \right).
\end{align*}
Notice that
\begin{align*}
I''(1)-I'(1)=\frac{2(n-1)(n+1)}{n-5}\left[-3c_1 a_0 \frac{(n+3)}{(n-7)} +4c_2 \frac{(n+3)(n+7)}{(n-7)(n-9)}\right].
\end{align*}
By definition \eqref{def:a_0} of $a_0$ and $I'(1)=0$, we have
\begin{align*}
I''(1)\leq& \frac{2(n-1)(n+1)}{n-5}\left[-\frac{9}{2}\frac{c_{1}^{2}}{c_0}\frac{(n+3)^2}{(n-7)^2}+4c_2 \frac{(n+3)(n+7)}{(n-7)(n-9)}\right]\\
=&-\frac{(n-1)(n+1)}{(n-5)c_0}d(p_n)<0
\end{align*}
for $n\geq 62$. This implies the desired result.  
\end{proof}

\begin{lemma}\label{numesti3}
There holds $J(1)<0$ for $n\geq 62$.
\end{lemma}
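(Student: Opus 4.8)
The plan is to put $J(1)$ into closed form in terms of $a_0$ and the constants $c_q$, substitute the value of $a_0$ fixed in \eqref{def:a_0}, and reduce the required negativity to a polynomial inequality in $n$ of exactly the same flavour as the condition $q(n)>0$ that already controls $I(1)>0$ and $I''(1)<0$. First I would unwind $J(1)$. With $d=1$, $f(s)=a_0+a_1 s$ and $a_1=-1$, the polynomial in \eqref{def:beta_q} is $2f(s)f'(s)+sf'(s)^2=-2a_0+3s$, so $\beta_0=-2a_0$ and $\beta_1=3$; inserting this into \eqref{def:J} and evaluating at $s=1$ gives
\begin{align*}
J(1)=\frac{n+3}{n-5}\left(-2a_0 c_0+\frac{3(n+5)}{n-7}\,c_1\right).
\end{align*}
Since $(n+3)/(n-5)>0$, the claim $J(1)<0$ is equivalent to $2a_0 c_0>\frac{3(n+5)}{n-7}c_1$.

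Next I would insert \eqref{def:a_0}, which gives $2a_0 c_0=\frac{(n+3)c_1}{n-7}\bigl(3+\sqrt{R}\bigr)$ with
$$R:=9-8\,\frac{(n+7)(n-7)}{(n+3)(n-9)}\,\frac{c_0 c_2}{c_1^2},$$
where $R\ge0$ because this radicand is a positive multiple of the discriminant $d(p_n)$, already shown to be positive for $n\ge62$ (this is what made $a_0$ well defined). Dividing the target inequality by $c_1/(n-7)>0$, the constant $3(n+3)$ cancels $3n$ cleanly, and the inequality collapses to $(n+3)\sqrt{R}>6$, i.e.\ $R>\frac{36}{(n+3)^2}$.

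Then I would bound $R$ from below using Lemma \ref{numesti1}: since $\frac{c_0 c_2}{c_1^2}\le\frac{(n-8)^2}{(n-10)(n-7)}$ and the coefficient of $\frac{c_0 c_2}{c_1^2}$ in $R$ is negative, we get
\begin{align*}
R\ge 9-\frac{8(n+7)(n-8)^2}{(n+3)(n-9)(n-10)}=\frac{q(n)}{(n+3)(n-9)(n-10)},
\end{align*}
where $q(n)=9(n+3)(n-9)(n-10)-8(n+7)(n-8)^2=n^3-72n^2+681n-1154$ is the polynomial already in play. Thus it suffices to prove $q(n)(n+3)>36(n-9)(n-10)$ for $n\ge62$, which I would deduce from the cruder bound $q(n)>36n$: indeed then $q(n)(n+3)>36n(n+3)>36n^2>36(n-9)(n-10)$. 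Finally $q(n)-36n=n^3-72n^2+645n-1154$ equals $396>0$ at $n=62$, and its derivative $3n^2-144n+645$ is positive at $n=62$ and increasing for $n\ge24$, so $q(n)-36n$ is increasing on $[62,\infty)$ and hence positive there.

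The only step that demands care rather than bookkeeping is the reduction of $2a_0 c_0>\frac{3(n+5)}{n-7}c_1$ to the clean radical inequality $\sqrt{R}>6/(n+3)$: one must check that clearing denominators genuinely produces this form (so that squaring is legitimate and no sign-sensitive case splitting appears). Beyond that, everything is algebraic simplification plus the elementary monotonicity of $q(n)-36n$, so I do not anticipate a real obstacle — this is the mildest of the three numerical lemmas, since $R>36/(n+3)^2$ is a substantially weaker requirement than the condition $R>0$ that is already established.
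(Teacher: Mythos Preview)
Your argument is correct and follows the same route as the paper: express $J(1)$ in closed form, substitute the value of $a_0$ from \eqref{def:a_0}, reduce $J(1)<0$ to the inequality $(n+3)\sqrt{R}>6$, and then bound $R$ from below via Lemma \ref{numesti1}. The only difference is cosmetic: for the final numerical check the paper introduces an auxiliary polynomial $\mathcal{P}(n)=\alpha(n+3)(n-9)(n-10)-(n+7)(n-8)^2$ with $\alpha=\tfrac{1}{8}\bigl(9-\tfrac{36}{65^2}\bigr)$ and verifies $\mathcal{P}(n)>0$ for $n\ge 62$, whereas you recycle the polynomial $q(n)$ already on hand and reduce to $q(n)>36n$; your version is marginally tidier but the content is identical.
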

\begin{proof}
By definition \eqref{def:J} of $J$, letting $a_1=-1$ and $a_0$ be chosen as in \eqref{def:a_0} we obtain $\beta_0 =-2 a_0$, $\beta_1=3$ by virtue of \eqref{def:beta_q}, and
\begin{align*}
J(s)=-\frac{2(n+3)}{n-5}c_0 a_0 s^2 +\frac{3(n+3)(n+5)}{(n-5)(n-7)}c_1  s^3.
\end{align*}
Thus we have
\begin{align*}
J(1)=\frac{(n+3)c_1}{(n-5)(n-7)} \left[6-(n+3)\sqrt{9-8\frac{(n+7)(n-7)}{(n+3)(n-9)}\frac{c_0 c_2}{c_{1}^{2}}} \right].
\end{align*}
By Lemma \ref{numesti1} we have 
\begin{align*}
J(1)\leq \frac{(n+3)c_1}{(n-5)(n-7)} \left[6-(n+3)\sqrt{9-\frac{8(n+7)(n-8)^2}{(n+3)(n-9)(n-10)}}\right].
\end{align*}
We consider
\begin{align*}
 \mathcal{P}(n):=\alpha (n+3)(n-9)(n-10)-(n+7)(n-8)^2 \mathrm{~~with~~} \alpha=\frac{1}{8}\left(9-\frac{36}{65^2} \right).
\end{align*}
Then a direct computation shows that $\mathcal{P}''(n)=6(\alpha-1)n+18-32\alpha>0$ for $n\geq 62$ and $\mathcal{P}'(62)>0, \mathcal{P}(62)>0$. This implies that $\mathcal{P} (n) >0$~~for $n \geq 62$. Observe that
\begin{align*}
(n+3)\sqrt{9-8\frac{(n+7)(n-8)^2}{(n+3)(n-9)(n-10)}} > (n+3)\sqrt{9-8 \alpha}\geq 6,
\end{align*}
where the last inequality follows from $n\geq 62$ and the choice of $\alpha$. This yields $J(1)<0$ for $n \geq 62$.
\end{proof}

Combing the above facts with Lemmas \ref{numesti2}-\ref{numesti3} we arrive at 
\begin{proposition}\label{prop:n>61}
Let $n\geq 62$, then there exists a polynomial $f(s)=-s+a_0$ with 
\begin{align*}
a_0 =\frac{(n+3)c_1}{2(n-7)c_0}\left[3+\sqrt{9-8\frac{(n+7)(n-7)}{(n+3)(n-9)}\frac{c_0 c_2}{c_{1}^{2}}}\right]
\end{align*}
such that $I(1)>0, I'(1)=0$, $I''(1) <0$ and $J(1)<0$. This implies that $\mathcal{F}\ks$ has a strict local minimum at the point $(0,1)$. 
\end{proposition}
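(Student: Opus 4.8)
The plan is to work with the one–parameter family $d=1$, $f(s)=a_1s+a_0$, normalize $a_1=-1$, and spend the single remaining degree of freedom $a_0$ on enforcing $I'(1)=0$. By the computation preceding \eqref{def:a_0}, $I'(1)$ is a fixed positive multiple of the quadratic $p_n(a_0)=c_0a_0^2-\tfrac{3(n+3)}{n-7}c_1a_0+\tfrac{2(n+3)(n+7)}{(n-7)(n-9)}c_2$, so a real solution exists iff its discriminant $d(p_n)\ge0$; feeding in the lower bound $c_1^2/(c_0c_2)\ge(n-10)(n-7)/(n-8)^2$ from Lemma \ref{numesti1} reduces this to positivity of the cubic $q(n)=9(n+3)(n-9)(n-10)-8(n-8)^2(n+7)$, and the sign analysis $q'(n)=3n^2-144n+681>0$ for $n\ge62$ together with $q(61)<0<q(62)$ pins the range to $n\ge62$. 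Taking $a_0$ to be the larger root, i.e. \eqref{def:a_0}, makes $I'(1)=0$; the remaining three requirements $I(1)>0$, $I''(1)<0$, $J(1)<0$ are then precisely the content of the display just before Lemma \ref{numesti2} and of Lemmas \ref{numesti2} and \ref{numesti3}, each again obtained from the crude ratio estimates of Lemma \ref{numesti1}.

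It then remains to turn these four sign conditions into the assertion that $(0,1)$ is a strict local minimum. Because of the reflection symmetry $\mathcal F\ks=\mathcal F(-\xi,\epsilon)$ the relations \eqref{eq:mixed_d^2F} hold, so $\nabla\mathcal F(0,1)=0$ (the $\xi$–components vanish by oddness, the $\epsilon$–component because $I'(1)=0$) and the Hessian of $\mathcal F$ at $(0,1)$ is block–diagonal for the splitting $\R^{n-1}_\xi\oplus\R_\epsilon$; it suffices to show both blocks are positive definite. For the $\epsilon$–block I would differentiate \eqref{eq:F(0,epsilon)} twice in $\epsilon$: the constant multiplying $I(\epsilon^2)$ is a strictly negative number times the positive quantity $\sum_{i,j,k,l}(\overline W_{ijkl}+\overline W_{ilkj})^2$, and $I'(1)=0$, so $\partial_\epsilon^2\mathcal F(0,1)$ equals a positive constant times $-I''(1)>0$ by Lemma \ref{numesti2}.

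For the $\xi$–block I would plug $f(s)=-s+a_0$ into Proposition \ref{prop:d_xi^2 F}, so that $f'\equiv-1$ and the two double integrals there simplify: the one carrying the weight $2f(r^2)f'(r^2)+r^2f'(r^2)^2$ equals $\tfrac12B(\tfrac{n+3}{2},\tfrac{n-3}{2})\,J(1)$ by \eqref{def:J}, hence is negative by Lemma \ref{numesti3}, while the one carrying the weight $f'(r^2)^2=1$ is strictly positive. Tracking the three terms of Proposition \ref{prop:d_xi^2 F} through their (positive) prefactors: the first becomes a positive multiple of the Gram-type matrix $\sum_{i,k,l}(\overline W_{ipkl}+\overline W_{ilkp})(\overline W_{iqkl}+\overline W_{ilkq})$, which is positive semidefinite, while the second and third each become a strictly positive multiple of $\delta_{pq}$; their sum is positive definite. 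Combining the two blocks, the Hessian of $\mathcal F$ at $(0,1)$ is positive definite, and since $\mathcal F$ is smooth near $(0,1)$ this yields a strict local minimum.

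The delicate part is entirely the numerology rather than the linear algebra: one must choose the single free parameter $a_0$ so that $d(p_n)>0$, $I(1)>0$, $I''(1)<0$ and $J(1)<0$ hold \emph{simultaneously and on the same dimension range}, and $n\ge62$ is forced by the least favorable of them. Notably $J(1)<0$ does double duty — it both furnishes the positive-definite piece $\propto|J(1)|\,\delta_{pq}$ of the $\xi$–Hessian and makes the a priori indefinite Gram term in Proposition \ref{prop:d_xi^2 F} enter with the harmless sign — so the whole argument hinges on controlling the signs of $J(1)$, $I''(1)$ and $d(p_n)$ using only the two–sided bounds $c_{q+1}/c_q\asymp(1+T_c^2)$ of Lemma \ref{numesti1}; this is where the auxiliary constant $\alpha=\tfrac18(9-36/65^2)$ and the polynomial sign checks in Lemmas \ref{numesti2}–\ref{numesti3} are doing the real work.
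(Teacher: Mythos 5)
Your proposal is correct and follows essentially the same route as the paper: the numerology (choosing $a_1=-1$, selecting $a_0$ as the larger root of $p_n$, bounding the discriminant via Lemma \ref{numesti1}, and the sign checks of Lemmas \ref{numesti2}–\ref{numesti3}) reproduces the text preceding the proposition, and your Hessian block-diagonal analysis is the content the paper leaves implicit behind the phrase ``our strategy is to find \dots\ such that $I(1)>0$, $I'(1)=0$, $I''(1)<0$ and $J(1)<0$.'' Your explicit sign-tracking through Proposition \ref{prop:d_xi^2 F} — noting that the integral weighted by $2ff'+r^2(f')^2$ equals $\tfrac12 B(\tfrac{n+3}{2},\tfrac{n-3}{2})J(1)<0$, so the Gram term and the first $\delta_{pq}$ term both enter with favorable sign while the $(f')^2$ term is manifestly positive — is a useful spelling-out of the step the paper treats as immediate, and your observation that $I(1)>0$ plays no role in the local-minimum conclusion but is rather needed downstream (to ensure $\mathcal F(0,1)<0$ in Proposition \ref{main1}) is accurate.
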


\section{Proof of Theorem \ref{Thm:main}}\label{Sect6}
\begin{proposition}\label{main1}
For $n\geq 62$, let $g=\exp(h)$ is a smooth Riemannian metric on $\overline{\Rn}$, where $h$ is a symmetric trace-free two tensor on $\overline{\Rn}$ satisfying
\begin{align*}
\begin{cases}
\displaystyle h_{ij}(x)=\mu\lambda^{2}f(\lambda^{-2}|x'|^2)H_{ij}(x), &\quad  \mathrm{in~~} B_\rho^+, \\
\displaystyle h_{ab}(x)=0, &\quad \mathrm{in~~} \overline \Rn\setminus B_\rho^+,\\
\displaystyle h_{na}(x)=0, &\quad \mathrm{in~~} \overline \Rn,\\
\end{cases}
\end{align*}
where $f(s)=a_0-s$ with the constant $a_0$ given in Proposition \ref{prop:n>61}, $0<\mu \leq 1, 0<\lambda\leq\rho\leq 1$, $1\leq i \leq n-1$, $1\leq a,b \leq n$ and $H_{ab}$ is defined in \eqref{def:H_ab}. Assume that $|h(x)|+|\d h(x)| +|\d^2 h(x)| \leq \alpha$ for all $x\in  \overline \Rn$. If $\alpha$ and $\mu^{-2}\lambda^{n-10}\rho^{2-n}$ are sufficiently small, then there exists a positive smooth solution of
\begin{align}\label{PDE:single_v}
\begin{cases}
\displaystyle \Delta_g v -c_n R_g v+n(n-2)v^{\frac{n+2}{n-2}}=0,&\quad \mathrm{in~~} \Rn,\\
\displaystyle \frac{\d v}{\d x_n}=(n-2)T_c v^{
\frac{n}{n-2}}, &\quad \mathrm{on~~} \d\Rn.
\end{cases}
\end{align}
Moreover, there exists $C=C(n,T_c)>0$, such that 
\begin{align*}
\sup_{B_{\lambda}^+(0)} v\geq C \lambda^{\frac{2-n}{2}}
\end{align*}
and 
\begin{align*}
&2(n-2)\int_{\Rn}v^{\frac{2n}{n-2}}-\frac{n-2}{n-1}T_c \int_{\d\Rn}v^{\frac{2(n-1)}{n-2}}\\
<&2(n-2)\int_{\Rn}u_{(0, 1)}^{\frac{2n}{n-2}}-\frac{n-2}{n-1}T_c\int_{\d\Rn}u_{(0,1)}^{\frac{2(n-1)}{n-2}}.
\end{align*}
\end{proposition}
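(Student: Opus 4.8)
\noindent\emph{Proof plan.} The plan is to run the Lyapunov--Schmidt scheme of Sections~\ref{Sect3}--\ref{Sect5}. Since $\d\Rn$ is totally geodesic for $g$ (its second fundamental form $\pi_{ij}=\Gamma_{ij}^n$ vanishes because $h_{na}=\d_n h_{ab}\equiv0$), we have $h_g\equiv0$, so by Proposition~\ref{prop:v_critical_pt_F} it suffices to exhibit a critical point of the reduced energy $\mathcal{F}_g\ks$, and I would look for one near $(0,\lambda)$. First I would rescale. Writing $(\xi,\e)=(\lambda\hat\xi,\lambda\hat\e)$ and using the homogeneities $u_{(\lambda\hat\xi,\lambda\hat\e)}(\lambda y)=\lambda^{-(n-2)/2}u_{(\hat\xi,\hat\e)}(y)$ and $h_{ab}(\lambda y)=\mu\lambda^{2d+2}\overline H_{ab}(y)$ (here $d=1$, so $2d+2=4$), together with the induced scaling of the corrector $w_{\ks}$ of \eqref{linear approximation}, the explicit main part of Corollary~\ref{Cor:key_est} collapses to $\mu^2\lambda^{4d+4}\mathcal{F}(\hat\xi,\hat\e)=\mu^2\lambda^{8}\mathcal{F}(\hat\xi,\hat\e)$, where $\mathcal{F}$ is the auxiliary function of Section~\ref{Sect5}; and dividing the three remainders of Corollary~\ref{Cor:key_est} by $\mu^2\lambda^{8}$ produces $C\mu^{2/(n-2)}\lambda^{8/(n-2)}$, $C\big(\mu^{-2}\lambda^{n-10}\rho^{2-n}\big)^{1/2}$ and $C\mu^{-2}\lambda^{n-10}\rho^{2-n}$. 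Because $\rho\le1$ gives $\lambda^{n-10}\le\mu^{-2}\lambda^{n-10}\rho^{2-n}$, smallness of $\mu^{-2}\lambda^{n-10}\rho^{2-n}$ forces $\lambda$ small and drives all three remainders to $0$. Hence $\mu^{-2}\lambda^{-8}\mathcal{F}_g(\lambda\,\cdot)\to\mathcal{F}$ in $C^0$, uniformly on a fixed small closed ball $\overline B_\delta(0,1)\subset\R^{n-1}\times(0,\infty)$; the estimates behind Corollary~\ref{Cor:key_est} are local, so they persist on $\lambda\overline B_\delta(0,1)$ at worst with a larger constant.

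Since $\mathcal{F}$ has a strict local minimum at $(0,1)$ by Proposition~\ref{prop:n>61} and $\mathcal{F}_g$ is $C^1$ by Proposition~\ref{prop:v_critical_pt_F}, the $C^0$ convergence is enough: choosing $\delta$ with $\mathcal{F}(0,1)<\min_{\partial B_\delta(0,1)}\mathcal{F}$ and then taking $\alpha$ and $\mu^{-2}\lambda^{n-10}\rho^{2-n}$ small, the rescaled function $\mu^{-2}\lambda^{-8}\mathcal{F}_g(\lambda\,\cdot)$ still takes a value at the center $(0,1)$ strictly below all of its values on $\partial B_\delta(0,1)$, so it attains its minimum over $\overline B_\delta(0,1)$ at an interior point $(\hat\xi_\ast,\hat\e_\ast)$, necessarily a critical point. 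Then $p_\ast:=(\lambda\hat\xi_\ast,\lambda\hat\e_\ast)$ is a critical point of $\mathcal{F}_g$, and Proposition~\ref{prop:v_critical_pt_F} gives that $v:=v_{p_\ast}$ is a positive smooth solution of \eqref{PDE:single_v} (its boundary condition matches the one in Proposition~\ref{prop:v_critical_pt_F} because $h_g\equiv0$).

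For the blow-up bound, $\hat\e_\ast$ and $|\hat\xi_\ast|$ lie within $\delta$ of $1$ and $0$, so from the explicit formula for $u_{p_\ast}$ one gets $u_{p_\ast}\ge c_1\lambda^{-(n-2)/2}$ on the half-ball $B_{\lambda/4}^+(\xi_\ast)$ and on its boundary trace $D:=B_{\lambda/4}(\xi_\ast)\cap\d\Rn\subset B_\lambda^+(0)$, where $\xi_\ast=\lambda\hat\xi_\ast$ and $c_1=c_1(n,T_c)>0$. If $\sup_{B_\lambda^+(0)}v<\tfrac12 c_1\lambda^{-(n-2)/2}$, then $|v-u_{p_\ast}|>\tfrac12 c_1\lambda^{-(n-2)/2}$ on $D$, so
\begin{align*}
\|v-u_{p_\ast}\|_{L^{\critbordo}(\d\Rn)}^{\critbordo}\ \ge\ \int_D|v-u_{p_\ast}|^{\critbordo}\ \ge\ c_2\ >\ 0
\end{align*}
with $c_2$ independent of $\mu,\lambda,\rho$; but \eqref{sol esti} bounds the left side by $\big(C\mu\lambda^{4}+C(\lambda/\rho)^{(n-2)/2}\big)^{\critbordo}$, which tends to $0$ under the hypotheses --- a contradiction. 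Hence $\sup_{B_\lambda^+(0)}v\ge\tfrac12 c_1\lambda^{-(n-2)/2}$.

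Finally, the energy comparison is precisely the assertion $\mathcal{F}_g(p_\ast)<0$. Testing \eqref{PDE:single_v} against $v$ and integrating by parts (using $h_g\equiv0$ and $\nu_g=-\d/\d x_n$ on $\d\Rn$) gives $\int_{\Rn}\big(|\nabla v|_g^2+c_nR_g v^2\big)=n(n-2)\int_{\Rn}v^{\crit}-(n-2)T_c\int_{\d\Rn}v^{\critbordo}$; substituting into \eqref{energy functional} and using that $\|u_{\ks}\|_{L^{\crit}(\Rn)}$ and $\|u_{\ks}\|_{L^{\critbordo}(\d\Rn)}$ are independent of $(\xi,\e)$, one obtains
\begin{align*}
\mathcal{F}_g(\xi,\e)=\mathcal{G}(v_{\ks})-\mathcal{G}(u_{(0,1)}),\qquad\mathcal{G}(w):=2(n-2)\int_{\Rn}w^{\crit}-\frac{n-2}{n-1}T_c\int_{\d\Rn}w^{\critbordo},
\end{align*}
so that the inequality to be proved is exactly $\mathcal{F}_g(p_\ast)<0$. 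Since $p_\ast$ minimizes $\mathcal{F}_g(\lambda\,\cdot)$ over $\overline B_\delta(0,1)\ni(0,1)$, $\mathcal{F}_g(p_\ast)\le\mathcal{F}_g(0,\lambda)=\mu^2\lambda^8\big(\mathcal{F}(0,1)+o(1)\big)$, and $\mathcal{F}(0,1)<0$ by \eqref{eq:F(0,epsilon)} together with $I(1)>0$, the assumption $\sum(\overline W_{ijkl}+\overline W_{ilkj})^2>0$, and the negative prefactor; hence $\mathcal{F}_g(p_\ast)<0$ once the parameters are small enough. The main difficulty is the rescaling bookkeeping of the first paragraph: one must verify that the explicit terms in Corollary~\ref{Cor:key_est} collapse exactly to $\mu^2\lambda^8\mathcal{F}(\hat\xi,\hat\e)$ --- in particular the homogeneity of $w_{\ks}$ --- and that each remainder is $o(\mu^2\lambda^8)$ precisely when $\mu^{-2}\lambda^{n-10}\rho^{2-n}$ is small, the exponent $n-10$ being, together with Proposition~\ref{prop:n>61}, the source of the dimension threshold $n\ge62$; everything else is an assembly of the propositions of Sections~\ref{Sect3}--\ref{Sect5}.
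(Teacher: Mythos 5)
Your proof is correct and takes essentially the same approach as the paper: rescale Corollary~\ref{Cor:key_est} by $\mu^{-2}\lambda^{-8}$, observe the remainders vanish as $\mu^{-2}\lambda^{n-10}\rho^{2-n}\to 0$, use the strict local minimum of $\mathcal{F}$ at $(0,1)$ (Proposition~\ref{prop:n>61}) to produce an interior minimizer of $\mathcal{F}_g$ on $\lambda\overline B_\delta(0,1)$, and then invoke Propositions~\ref{prop:sol_v} and~\ref{prop:v_critical_pt_F}. The only minor departure is your blow-up lower bound via the boundary-trace estimate \eqref{sol esti} on the disk $D$, whereas the paper uses the interior $L^{2n/(n-2)}$ bound from \eqref{annihilator esti} on $B_\lambda^+(0)$; both are valid.
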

\begin{proof}
It follows from Proposition \ref{prop:n>61} that $(0,1)$ is a strict local minimum point of $\mathcal{F}\ks$. Hence, we can find an open set $\Omega' \subset \Omega$ such that $(0,1)\in \Omega'$ and $\mathcal{F}(0,1)<\displaystyle{\inf_{\ks\in\d\Omega'}}\mathcal{F}\ks<0$. By Corollary \ref{Cor:key_est} with $d=1$, we have 
\begin{align*}
&\left|\mathcal{F}_g(\lambda\xi,\lambda\epsilon) -\lambda^{4d+4}\mu^2 \mathcal{F}\ks \right|\\
&\leq C\mu^{\frac{2(n-1)}{n-2}}\lambda^{\frac{(4d+4)(n-1)}{n-2}}+C\mu\lambda^{2d+2+\frac{n-2}{2}}\rho^{\frac{2-n}{2}}+C\lambda^{n-2}\rho^{2-n}
\end{align*}
for all $\ks\in\Omega$, equivalently,
\begin{align*}
&\left|\lambda^{-4d-4}\mu^{-2}\mathcal{F}_g(\lambda\xi,\lambda\epsilon) - \mathcal{F}\ks \right|\\
\leq& C \mu^{\frac{2}{n-2}}\lambda^{\frac{4d+4}{n-2}}+C\mu^{-1}\lambda^{\frac{n-4d-6}{2}}\rho^{\frac{2-n}{2}}+C\mu^{-2}\lambda^{n-4d-6}\rho^{2-n}
\end{align*}
for all $\ks\in \Omega$. If $\mu^{-2}\lambda^{n-4d-6}\rho^{2-n}$ is sufficiently small, then we have 
\begin{align*}
\mathcal{F}_g (0,\lambda) <\inf_{\ks\in\d\Omega'}\mathcal{F}_g(\lambda\xi,\lambda\epsilon)<0.
\end{align*}
Consequently, there exists $(\bar{\xi}, \bar{\epsilon})\in \Omega'$ such that 
\begin{align*}
\mathcal{F}_g (\lambda \bar{\xi},\lambda \bar{\epsilon})=\inf_{\ks\in\Omega'}\mathcal{F}_g(\lambda\xi,\lambda\epsilon)<0.
\end{align*}
It follows from Proposition \ref{prop:v_critical_pt_F} that the function $v=v_{(\lambda\bar{\xi}, \lambda\bar{\epsilon})}$ obtained in Proposition \ref{prop:sol_v} is a positive smooth solution to \eqref{PDE:single_v}.
By definition of $\mathcal{F}_g$ we have 
\begin{align*}
&2(n-2)\int_{\Rn}v^{\frac{2n}{n-2}}-\frac{n-2}{n-1}T_c \int_{\d\Rn}v^{\frac{2(n-1)}{n-2}}\\
=&\mathcal{F}_g(\lambda\bar{\xi}, \lambda\bar{\epsilon})+2(n-2)\int_{\Rn}u_{(\lambda\bar{\xi}, \lambda\bar{\epsilon})}^{\frac{2n}{n-2}}-\frac{n-2}{n-1}T_c\int_{\d\Rn}u_{(\lambda\bar{\xi}, \lambda\bar{\epsilon})}^{\frac{2(n-1)}{n-2}},
\end{align*}
whence
\begin{align*}
&2(n-2)\int_{\Rn}v^{\frac{2n}{n-2}}-\frac{n-2}{n-1}T_c \int_{\d\Rn}v^{\frac{2(n-1)}{n-2}}\\
<&2(n-2)\int_{\Rn}u_{(0, 1)}^{\frac{2n}{n-2}}-\frac{n-2}{n-1}T_c\int_{\d\Rn}u_{(0, 1)}^{\frac{2(n-1)}{n-2}}.
\end{align*}
By \eqref{annihilator esti} and Proposition \ref{prop:conf_operators} we estimate
\begin{align*}
\|v-u_{(\lambda\bar{\xi}, \lambda\bar{\epsilon})}\|_{L^{\frac{2n}{n-2}}(B_{\lambda}^+(0))}\leq \|v-u_{(\lambda\bar{\xi}, \lambda\bar{\epsilon})}\|_{L^{\frac{2n}{n-2}}(\d\Rn)}\leq C\alpha.
\end{align*}
Then, 
\begin{align*}
|B_{\lambda}^+(0)|^{\frac{n-2}{2n}}\displaystyle{\sup_{B_{\lambda}^+(0)}v}\geq \|v \|_{L^{\frac{2n}{(n-2)}}(B_\lambda^+ (0))} \geq -C\alpha +\|u_{(\lambda\bar{\xi}, \lambda\bar{\epsilon})}\|_{L^{\frac{2(n-1)}{(n-2)}}(B_\lambda^+ (0))}.
\end{align*}
Hence, if $\alpha$ is sufficiently small, then we obtain
\begin{align*}
\displaystyle{\sup_{B_{\lambda}^+(0)}} v\geq C \lambda^{\frac{2-n}{2}}.
\end{align*}
This completes the proof.
\end{proof}
\begin{theorem}\label{Thm: half_space}
Let $n\geq 62$, then there exists a smooth Riemannian metric $g$ on $\overline \Rn$ with the following properties:
\begin{enumerate}
\item[(a)] $g_{ab}(x)=\delta_{ab}$  for $\overline \Rn \setminus B_{1/2}^+(0)$;
\item[(b)] $g$ is not conformally flat;
\item[(c)] $\d\Rn$ is totally geodesic with respect to the induced metric of $g$;
\item[(d)] there exists a sequence of positive smooth functions $\{v_{\nu}; \nu \in \mathbb{N}\} $ satisfying 
\begin{align*}
\begin{cases}
\displaystyle \Delta_g v_{\nu} -c_n R_g v_{\nu}+n(n-2)v_{\nu}^{\frac{n+2}{n-2}}=0,&\quad \mathrm{in~~} \Rn,\\
\displaystyle \frac{\d v_{\nu}}{\d x_n}=(n-2)T_c v_{\nu}^{
\frac{n}{n-2}}, &\quad\mathrm{on~~} \d\Rn,
\end{cases}
\end{align*}
for all $\nu$. Moreover, there hold
\begin{align*}
&2(n-2)\int_{\Rn}v_{\nu}^{\frac{2n}{n-2}}-\frac{n-2}{n-1}T_c \int_{\d\Rn}v_{\nu}^{\frac{2(n-1)}{n-2}}\\
<&2(n-2)\int_{\Rn}u_{(0, 1)}^{\frac{2n}{n-2}}-\frac{n-2}{n-1}T_c\int_{\d\Rn}u_{(0, 1)}^{\frac{2(n-1)}{n-2}}
\end{align*}
for all $\nu$, i.e. $I_{(\Rn,g)}[v_\nu]<S_c$, and $\displaystyle{\sup_{B_1^+ (0)}}v_{\nu} \to \infty $ as $\nu \to \infty$.
\end{enumerate}
\end{theorem}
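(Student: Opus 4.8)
\emph{Step 1: the metric.} The plan is to superpose infinitely many shrinking, pairwise-disjoint copies of the single-bubble metric of Proposition \ref{main1}, accumulating at one boundary point, so that one fixed metric carries a whole blow-up sequence. Fix boundary points $p_\nu=4^{-\nu}e_1\in\d\Rn$ and radii $r_\nu=4^{-\nu-2}$ for $\nu\geq 1$; then the half-balls $B_{r_\nu}^+(p_\nu)$ are pairwise disjoint and all contained in $B_{1/2}^+(0)$. On $B_{r_\nu}^+(p_\nu)$ I would place a translated, rescaled copy of the Section \ref{Sect4} perturbation with $d=1$ and $f(s)=a_0-s$ from Proposition \ref{prop:n>61}: a trace-free, $x_n$-independent two-tensor $h^{(\nu)}$ with $h^{(\nu)}_{na}\equiv 0$ that equals $\mu_\nu\lambda_\nu^{2}f(\lambda_\nu^{-2}|x'-p_\nu'|^2)H_{ij}(x-p_\nu)$ in $B_{\rho_\nu}^+(p_\nu)$ and is cut off to $0$ outside $B_{r_\nu}^+(p_\nu)$, with $0<\lambda_\nu\leq\rho_\nu<r_\nu$ and $0<\mu_\nu\leq 1$ to be fixed below. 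Set $h=\sum_\nu h^{(\nu)}$ and $g=\exp(h)$. Since the supports are disjoint and $h^{(\nu)}=O(\mu_\nu(\lambda_\nu+|x-p_\nu|)^4)$ with the analogous bounds $O(\mu_\nu(\lambda_\nu+|x-p_\nu|)^{3})$, $O(\mu_\nu(\lambda_\nu+|x-p_\nu|)^{2})$ for its first two derivatives (constants depending only on $n$ and $T_c$), taking $\mu_\nu\downarrow 0$ forces $|h|+|\d h|+|\d^2h|\leq\alpha$ globally --- with $\alpha$ the threshold from Propositions \ref{prop:sol_v} and \ref{main1} --- and, together with $r_\nu\downarrow 0$, makes $h$ and hence $g$ smooth up to and including the accumulation point $0$. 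Properties (a)--(c) are then immediate: $h$ is supported in $B_{1/2}^+(0)$, giving (a); $h_{na}\equiv 0$ gives, as computed in Section \ref{Sect4}, that $\d\Rn$ is totally geodesic for $g$, which is (c); and on each $B_{\rho_\nu}^+(p_\nu)$ the Weyl tensor of $g$ is, to leading order in $\mu_\nu$, a nonzero multiple of $\overline W$ --- recall $\sum_{i,j,k,l}(\overline W_{ikjl}+\overline W_{iljk})^2>0$ --- so $g$ is not conformally flat, which is (b).

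\emph{Step 2: the blow-up sequence.} For each $\nu$ I would invoke Proposition \ref{main1} in coordinates centered at $p_\nu$. Its hypotheses are that $\alpha$ is small, which already holds, and that $\mu_\nu^{-2}\lambda_\nu^{n-10}\rho_\nu^{2-n}$ is small; having fixed $\rho_\nu$ and $\mu_\nu$, I choose $\lambda_\nu\downarrow 0$ so rapidly that $\mu_\nu^{-2}\lambda_\nu^{n-10}\rho_\nu^{2-n}\to 0$, which is possible because $n-10\geq 52>0$ and which forces $\lambda_\nu\to 0$. Proposition \ref{main1} then produces, for every $\nu$, a positive smooth solution $v_\nu$ of \eqref{PDE:single_v} satisfying $\sup_{B_{\lambda_\nu}^+(p_\nu)}v_\nu\geq C\lambda_\nu^{(2-n)/2}$ and the strict energy inequality displayed in Proposition \ref{main1}. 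Testing the equation against $v_\nu$ and integrating by parts identifies the left side of that inequality with $\tfrac{n-2}{4(n-1)}I_{(\Rn,g)}[v_\nu]$ and its right side with $\tfrac{n-2}{4(n-1)}I_{(\Rn,|\ud x|^2)}[u_{(0,1)}]=\tfrac{n-2}{4(n-1)}S_c$, so it reads exactly $I_{(\Rn,g)}[v_\nu]<S_c$. Since $p_\nu\in B_{1/2}^+(0)$ and $\lambda_\nu\to 0$, also $\sup_{B_1^+(0)}v_\nu\geq C\lambda_\nu^{(2-n)/2}\to\infty$. This gives (d); and Theorem \ref{Thm:main} then follows by transplanting $g$ to $\Sn_+$ through the stereographic projection of Section \ref{Sect3}, using that $g$ is flat off $B_{1/2}^+(0)$ and $\d\Rn$ is $g$-totally geodesic.

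\emph{Main obstacle.} Proposition \ref{main1} is stated for a single bump, i.e. with $h\equiv 0$ off one half-ball, while our $g$ carries infinitely many. Thus, when applying it at $p_\nu$, one must verify that the remaining bumps --- all at distance $\delta_\nu\gtrsim 4^{-\nu}>0$ from $p_\nu$ --- disturb the reduced energy $\mathcal F_g\ks$, and the auxiliary quantities in Propositions \ref{prop:A1}--\ref{prop:A3} and Corollary \ref{Cor:key_est}, only by amounts of the same size as the error already estimated there. This is plausible because those bumps enter the relevant integrals only over $\{|x-p_\nu|\gtrsim\delta_\nu\}$, where $u_{\ks}=O(\lambda_\nu^{(n-2)/2}|x-p_\nu|^{2-n})$, and so contribute at most $C\lambda_\nu^{n-2}\delta_\nu^{-N}$ for a dimensional constant $N=N(n)$; this is of the same structure as the condition ``$\mu^{-2}\lambda^{n-10}\rho^{2-n}$ small'' already imposed, so shrinking $\lambda_\nu$ once more (relative to the now-fixed $\delta_\nu$ and $\mu_\nu$) absorbs it, and the proof that $\mathcal F_g\ks$ has a strict local minimum near the rescaled point $(\lambda_\nu\cdot 0,\lambda_\nu\cdot 1)$ --- hence all of Proposition \ref{main1} --- carries over verbatim. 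Making this perturbation bookkeeping precise is the one genuinely new point beyond the single-bubble analysis of Sections \ref{Sect3}--\ref{Sect5}.
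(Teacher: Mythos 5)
Your construction — superpose shrinking, pairwise-disjoint copies of the Section~\ref{Sect4} single-bump perturbation at boundary points accumulating at the origin and then invoke Proposition~\ref{main1} bump by bump — is exactly the paper's strategy; the paper places the bumps at $x_N=N^{-1}e_1$ with $\lambda=2^{-N/2}$, $\rho=(2N)^{-2}$, $\mu=2^{-N}$ rather than your $p_\nu=4^{-\nu}e_1$, but that choice is cosmetic, and the verification of (a)--(d) proceeds as you describe.

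Two remarks. First, your Step~1 asserts that merely ``taking $\mu_\nu\downarrow 0$'' together with $r_\nu\downarrow 0$ makes $h$ smooth at the accumulation point $0$; with your geometric spacing $r_\nu\sim 4^{-\nu}$ the $k$-th derivative of the $\nu$-th bump has size of order $\mu_\nu r_\nu^{4-k}\sim \mu_\nu 4^{\nu(k-4)}$, so for $C^\infty$ regularity at the origin (all derivatives must tend to $0$ along the accumulating bumps) you must take $\mu_\nu\to 0$ super-exponentially, e.g.\ $\mu_\nu=4^{-\nu^2}$. This is harmless and does not interfere with choosing $\lambda_\nu$ afterwards, but it should be made explicit. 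The paper's polynomial spacing $x_N=N^{-1}e_1$ (so $r_N\sim N^{-2}$) is precisely what lets it get away with an exponential $\mu_N=2^{-N}$. Second, your ``main obstacle'' observation is a real and valid point: Proposition~\ref{main1} as stated requires $h\equiv 0$ off a single half-ball, which is false for the superposed metric, and the paper's proof is silent on this. The resolution is the one you sketch and it requires no new estimate: the inputs to Corollary~\ref{Cor:key_est} (via Propositions~\ref{prop:A1}--\ref{prop:A3} and the estimates \eqref{asym esti1}--\eqref{sol esti}) only assume $h$ equals the prescribed bump on $B_\rho^+$, vanishes outside $B_1^+$, and is bounded by $\alpha$ in between. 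After recentering at $p_\nu$, the other bumps all sit in that annulus, and their contribution is already absorbed into the $C\lambda^{n-2}\rho^{2-n}$ error term, so the reduced-energy expansion and the strict local minimum of $\mathcal{F}$ near $(0,1)$ survive unchanged.
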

\begin{proof}
Let $\chi$ be a smooth cut-off function in $\mathbb{R}$ such that $0\leq \chi(t)\leq 1$ for $t \in \mathbb{R}$, $\chi(t)=1$ for $t\leq 1$ and $\chi(t)=0$ for $t\geq 2$. We define a trace-free symmetric two-tensor in $\overline{\Rn}$ by 
\begin{align*}
h_{ab}(x)=\displaystyle{\sum_{N=N_0}^{\infty}}\chi (4N^2 |x-x_N|)2^{-N}f(2^N |x'-x_N|^2)H_{ab}(x-x_N),
\end{align*}
where $x_N=(\frac{1}{N},0,\cdots,0)\in\d\Rn$. Observe that $h$ is smooth and satisfies $h_{an}(x)=0$ and $\d_n h_{ab}(x)=0$ on $\d\Rn$. We choose $\alpha>0$ to be the constant in Proposition \ref{main1} and $N_0$ sufficiently large, then $h_{ab}(x)=0$ for $|x|\geq \frac{1}{2}$ and $|h(x)|+|\d h(x)| +|\d^2 h(x)| \leq \alpha$ for all $x\in \overline \Rn$. Thus, the desired assertion follows from Proposition \ref{main1} with $\lambda=2^{-N/2}, \rho=(2N)^{-2}, \mu=2^{-N}$.
\end{proof}

\end{document}